\let\pa\partial
\newcommand{\R}{\mathbb R}
\newcommand{\ga}{\mathbf A}
\newcommand{\bD}{\mathbf D}
\newcommand{\gd}{\mathbf D}
\newcommand{\bF}{\mathbf F}
\newcommand{\gh}{\mathbf H}
\newcommand{\bI}{\mathbf I}
\newcommand{\bP}{\mathbf P}
\newcommand{\gp}{\mathbf P}
\newcommand{\bV}{\mathbf V}
\newcommand{\bU}{\mathbf U}
\newcommand{\bSigma}{\mathbf \Sigma}
\newcommand{\bn}{\mathbf n}
\newcommand{\n}{\mathbf n}
\newcommand{\bp}{\mathbf p}
\newcommand{\f}{\mathbf f}
\newcommand{\p}{\mathbf p}
\newcommand{\bs}{\mathbf s}
\newcommand{\s}{\mathbf s}
\newcommand{\bu}{\mathbf u}
\newcommand{\bv}{\mathbf v}
\renewcommand{\v}{\ensuremath{\mathbf{v}}}
\newcommand{\bw}{\mathbf w}
\newcommand{\w}{\mathbf w}
\newcommand{\bx}{\mathbf x}
\newcommand{\x}{\mathbf x}
\newcommand{\by}{\mathbf y}
\newcommand{\bz}{\mathbf z}
\newcommand{\zz}{\ensuremath{\mathbf{z}}}
\newcommand{\T}{\mathcal T}
\newcommand{\Tau}{\mathcal T}
\newcommand{\spa}{s}
\newcommand{\ti}{q}
\newcommand{\stab}{\operatorname{stab}}
\newcommand{\bnu}{\boldsymbol{\nu}}
\newcommand{\Div}{\operatorname{div}}
\renewcommand{\div}{\operatorname{div}}
\newcommand{\tr}{\mathop{\rm tr}}
\newcommand{\id}{\mathop{\rm id}\nolimits}
\newcommand\restrict[1]{\raisebox{-.5ex}{$|$}_{#1}}
\newcommand{\lin}{\operatorname{lin}}
\newcommand{\triplenorm}{\ensuremath{| \! | \! |}}
\newcommand*{\ddt}[3][]{\ensuremath{\frac{\partial^{#1} #2}{\partial #3}}}
\newtheorem{remark}[theorem]{Remark}
\newtheorem{assumption}[theorem]{Assumption}
\crefname{equation}{}{}
\pgfplotsset{cycle list name=haukes_cycle_list}
\pgfplotsset{compat=1.15}
\pgfplotsset{	every non boxed x axis/.style={} }
\begin{document}

\title{Analysis of a space-time unfitted finite element method for {PDEs} on evolving surfaces}
	\author{Arnold Reusken\thanks{Institut f\"ur
		Geometrie und Praktische  Mathematik, RWTH-Aachen University, D-52056 Aachen,
		Germany; email: {\tt reusken@igpm.rwth-aachen.de}}
	  \and Hauke Sass\thanks{Institut f\"ur
		Geometrie und Praktische  Mathematik, RWTH-Aachen University, D-52056 Aachen,
		Germany; email: {\tt sass@igpm.rwth-aachen.de}}  }

\maketitle

\begin{abstract}
  In this paper we  analyze a space-time unfitted finite element method for the discretization of scalar surface partial differential equations on evolving surfaces. For higher order approximations of the evolving surface we use the technique of (iso)parametric mappings for which a level set representation of the evolving surface is essential. We derive basic results in which certain  geometric characteristics of the exact space-time surface are related to corresponding ones of the numerical surface approximation. \textcolor{black}{These results are used in an error analysis of a higher order space-time TraceFEM.}  
  \end{abstract}

\begin{keywords}  surface partial differential equation, space-time finite element method, CutFEM, TraceFEM, finite element error analysis
\end{keywords}


\section{Introduction}
In the past two decades a large toolbox of finite element methods for solving \emph{scalar} partial differential equations (PDEs) on \emph{evolving} surfaces has been developed.  The probably most  prominent method is the evolving surface finite element method (ESFEM) \cite{DEreview,DziukElliot2013a,kov2018}. This is an elegant and  popular method for which a complete error analysis has been developed \cite{elliott2015error,DziukElliot2013a}. A key feature of this method is that it is based on a Lagrangian approach in which a triangulation of the initial surface and a corresponding finite element space are transported along the flow lines of a (given) velocity field. This makes the method  attractive for problems with smoothly and slowly varying surfaces. There are also recent contributions  \cite{Zhao_2020,kovacs2022numerical} in which the ESFEM  is applied to problems with geometric singularities (e.g. droplet pinch-off). It is, however, well-known that such Lagrangian techniques have drawbacks if the geometry of the surface is strongly varying on the relevant time scales or if there are topological singularities, e.g., merging or splitting phenomena. Another finite element technique for PDEs on evolving surfaces is introduced in \cite{LOX_SIAM_2017}. This method is based on an Eulerian approach in which  the stabilized trace finite element method (TraceFEM) is used for  spatial discretization and combined with standard finite differences (BDF) for the time discretization. With such an Eulerian approach it is easier to handle topological singularities. Furthermore,  if the surface PDE is coupled with a bulk PDE, the use of TraceFEM, which is based on standard bulk finite element spaces, may lead to advantages concerning the implementation. In \cite{LOX_SIAM_2017} an error analysis of this method is presented for BDF1 combined with higher order finite elements for spatial discretization. In this analysis geometry errors are treated but  there is an issue concerning sufficiently accurate numerical integration on the approximate surface~\cite[Remark 4.1]{LOX_SIAM_2017}.  Yet another approach, also of Eulerian type, has been introduced in \cite{olshanskii2014eulerian,olshanskii2014error} and uses the TraceFEM or CutFEM principle not only in space but in space-time. In other words, the discretization of the evolving surface PDE uses a restriction to the space-time surface of standard space-time finite element spaces on a fixed bulk (tensor product) space-time mesh. In the framework of CutFEM this is a natural approach for this class of partial differential equations.  The structure of this space-time TraceFEM is such that it naturally fits to a level set representation of the evolving surface. Such level set representations are often used for problems with strongly varying surface geometries or with topological singularities.  

In this paper we treat this space-time TraceFEM. Its main characteristics are the following. The method is of Eulerian type, using a (per time step) fixed unfitted bulk spatial triangulation and corresponding standard  space-time (tensor product)  finite element spaces. As input for the method one needs  a level set representation of the evolving surface. If a sufficiently accurate (made precise further on in the paper) level set function approximation is available  and higher order bulk space-time finite element spaces are used, the resulting surface PDE discretization method is also of (optimal) higher order accuracy for smoothly varying surfaces with sufficiently smooth solutions. Without any modifications the method can also be used for the discretization of problems with topological singularities. Of course, in such singular cases one can not expect global higher order convergence. In \cite{olshanskii2014eulerian,olshanskii2014error} the method was introduced for piecewise linears (in space and time) and the issue of geometry approximation was not considered. In the recent paper \cite{sass2022} the method was extended in two directions. Firstly, a 
	higher order variant was introduced based on the parametric TraceFEM \cite{lehrenfeld2016high}. Secondly,  the so-called volume normal derivative stabilization was included. This stabilization is important, in particular for higher order finite elements, to control the conditioning of the resulting stiffness matrix. In \cite{sass2022} these extensions are explained and a systematic numerical study of this method is presented that demonstrates (optimal) higher order accuracy for smoothly varying surfaces with sufficiently smooth solutions and its robustness with respect to   topological singularities.
	
	 An important   topic of the present paper is an error analysis of this higher order space-time TraceFEM.  
	 We put the analysis in a rather general framework in the following sense. We assume that the evolving surface is characterized using a level set function $\phi=\phi(\bx,t)$,  $\bx \in \Omega \subset  \R^3$, $t \in [0,T]$. The smooth space-time surface, denoted by $S \subset \R^4$, is the zero level of $\phi(\cdot,t)$ for $t \in [0,T]$.  For an efficient construction of  a (higher order) approximation $S_h$ of $S$ we use the general technique of level set based parametric mappings.  In \cite{lehrenfeld2016high} this technique is introduced for the case of a stationary surface and in \cite{HLP2022} its generalization to space-time surfaces is treated. 
	 
	 In the first part of this paper we derive basic properties of this space-time surface approximation. More specifically, we examine relations between surface measures on $S_h$ and $S$, derive useful integral transformation and partial integration formulas on $S_h$ mirroring those on $S$, and show how to transform the space-time surface gradient of a function on $S$ to the space-time surface gradient on  $S_h$ of an extension of this function. We also derive estimates on the change in surface measure when moving from $S$ to $S_h$ and on the size of conormal jumps in the (not necessarily shape regular) surface triangulation of $S_h$.  For these geometric estimates we use results that are presented in the recent work \cite{HeimannLehrenfeld}
	 
	  The results of the first part of the paper, which depend essentially only on the level set function approximation $\phi_h \approx \phi$ and on properties of the space-time parametric mapping \cite{HLP2022,HeimannLehrenfeld}, may be useful also in the development and analysis of methods other than space-time TraceFEM. In the second part of the paper we give an error analysis of the space-time TraceFEM introduced in \cite{sass2022}. In this analysis we use many of the basic results derived in the  first part. We do not include results of numerical experiments with space-time TraceFEM. For this we refer to \cite{sass2022,Diss_Sass}.

The paper is organized as follows. In Sections~\ref{secLS} and \ref{secSurface} we collect basic facts of the level set function approximation $\phi_h \approx \phi$ and recall relevant properties of the parametric mapping that is used for the mesh deformation. We also explain how functions defined on $S$ or $S_h$ are extended to a small neighborhood. In  Section~\ref{section_basic_geometry_est} we derive several useful integral transformation rules  and a partial integration rule on $S_h^n$, which denotes the $n$th time slab part of $S_h$. We also show how for a function defined on $S$ its space-time surface gradient can be expressed in terms of a space-time  surface gradient on $S_h$ of an extension of this function. 
In Section~\ref{secGeometry} bounds for certain geometric quantities, e.g., the change in surface measure between $S$ and $S_h$ or the jump in certain conormals on $S_h$ are derived. In Section~\ref{section_application} we consider the standard model of a surfactant equation on a smoothly evolving surface and outline the space-time TraceFEM applied to this surface PDE. In Section~\ref{consistency} we present an error analysis of this method. This analysis has a standard structure, based on a space-time stability estimate, a Strang Lemma, consistency estimates and interpolation error bounds. The main result is given in Theorem~\ref{mainthm}, which contains optimal order error bounds for the space-time TraceFEM. 

\section{Level set representation of the continuous space-time surface} \label{secLS}
We introduce a space-time surface that is characterized using a level set function. For a domain 
$\Omega\subset \R^3$, $T>0$, a smooth velocity field $\mathbf{w}:\Omega\times [0,T]\rightarrow \R^3$ and an initialization $\phi_0: \Omega \to \R$, assume that  the level set function $\phi: \Omega\times [0,T] \to \R$ solves the level set equation 
$\frac{\partial \phi}{\partial t} + \bw \cdot \nabla \phi =0$ and satisfies $\phi(\cdot,0)=\phi_0$. Furthermore we assume that the zero level 
\begin{equation*}
	\Gamma(t)=\left\lbrace \bx\in\Omega: \phi(\bx,t)=0 \right\rbrace
\end{equation*}
is  a closed, connected and orientable $C^2$-hypersurface for all $t \in [0,T]$. This in particular implies that the zero level $\Gamma(0)$ is  advected by $\mathbf{w}$.  
The corresponding smooth space-time surface   is denoted by
\begin{equation*}
	S\coloneqq \bigcup_{t\in [0,T]}\left(\Gamma(t)\times \{t\}\right)\subset\R^4.
\end{equation*}
\textcolor{black}{In the following, we use \(\lesssim\) and \(\gtrsim\) for inequalities that hold with constants independent of mesh size, time step size and cut configuration within the mesh. We use \(\sim\) to denote that both \(\lesssim\) and \(\gtrsim\) hold.
} 
\begin{assumption}\label[assumption]{a1}
We assume  the following standard properties of a level set function. For all $(\bx,t)$ in a neighborhood $U \subset \R^4$ of $S$:
\begin{equation}
	\norm{\nabla\phi(\bx,t)}\sim 1, \quad \norm{D^2\phi(\bx,t)}\lesssim 1,\quad \norm{\partial_t \phi}\lesssim 1, \quad \norm{\partial_t \nabla \phi}\lesssim 1, \label{phi_assumptions}
\end{equation}
 and for all $t \in [0,T]$, $\epsilon, \tilde{\epsilon} \in [0,\epsilon_0]$, $\epsilon_0 > 0$ sufficiently small:
\begin{equation} \label{AA1}
	\abs{\phi(\x + \epsilon \nabla\phi(\x),t)- \phi(\x+\tilde{\epsilon}\nabla\phi(\x),t)}\sim \abs{\epsilon - \tilde{\epsilon}}, \quad \bx \in U(t),
\end{equation}
where  $U(t) \subset \Omega$ is such that $U= \cup_{t \in [0,T]} U(t) \times \{t\}$.
\end{assumption}

The spatial unit outer normal $\bn:S \to \R^3$  and the  space-time unit outer normal $\bn_S: S \to \R^4$ 
are given by
\begin{equation} \label{A1}
	\mathbf{n}\coloneqq \frac{\nabla \phi}{\norm{\nabla \phi}} \quad \text{and}\quad \mathbf{n}_S\coloneqq \frac{\nabla_{(\x,t)}\phi}{\norm{\nabla_{(\x,t)}\phi}}.
\end{equation} 
On $S$ the following relation between $\bn$ and $\bn_S$ holds:
\begin{equation}
	\n_S=\frac{1}{\alpha}\begin{pmatrix}\mathbf{n}\\-V\end{pmatrix},\quad V\coloneqq \mathbf{w}\cdot \mathbf{n},\quad \alpha\coloneqq \sqrt{1+V^2}. \label{ns_schreibweise}
\end{equation}
Note that  $V=\bw\cdot\bn$ is the normal velocity of the evolving surface $\Gamma(t)$ and satisfies
\begin{equation} \label{formnormalvel}
V= \frac{-1}{\norm{\nabla\phi}}\frac{\partial \phi}{\partial t}.
\end{equation}
 Let $\dif s$ and $\dif \sigma$ be the surface measures on $\Gamma(t)$ and $S$ respectively. The integral transformation formula
\begin{equation}
	\int_0^T\int_{\Gamma(t)}g\dif s\dif t=\int_S \frac{g}{\alpha}\dif \sigma, \quad g\in L^2(S),\label{transformationsformelMAIN}
\end{equation}
holds,  cf. \cite[Appendix A.1]{Diss_Sass} for a proof. {\color{black} Clearly we could also use the transformation formula \eqref{transformationsformelMAIN} in a different form, with $\alpha \, g$ and $g$ on the left-hand side and right-hand side, respectively. We use the form \eqref{transformationsformelMAIN} because in the application in Section~\ref{section_application} we use tensor product spaces, which fit naturally to the space-time integral (without any scaling) as in the left-hand side of \eqref{transformationsformelMAIN}.} One important result in \Cref{section_basic_geometry_est} (Corollary~\ref{fancyformelcoro}) is an  analogue of \cref{transformationsformelMAIN} for a space-time surface approximation that has only Lipschitz smoothness. 
\section{Approximation of the space-time surface} \label{secSurface}
In the context of discretization of surface partial differential equations on evolving surfaces it is natural to use a time stepping procedure. 
For $N\in \mathbb{N}$, let the time interval $[0,T]$ be partitioned into smaller time intervals $I_n\coloneqq [t_{n-1},t_n]$, $1 \leq n \leq N$, where $0=t_0<t_1<\dots<t_N=T$.  We define $Q\coloneqq \Omega\times [0,T]\subset \mathbb{R}^{4}$ and for $n=1, \ldots, N$, we define the space-time cylinders and corresponding space-time surface
\begin{equation*}
	 Q_n\coloneqq \Omega \times I_n, \quad S^n\coloneqq \bigcup_{t\in I_n}( \Gamma(t)\times \{t\}).
\end{equation*}
Let $\T$ be an element of a family $\{{\T}_h\}_{h>0}$ of shape regular tetrahedral triangulations of $\Omega$,  the triangulation $Q_{h,n}\coloneqq \T\times I_n$ divides the time slab $Q_n$ into space-time prismatic elements. 
The  space-time triangulation of $Q$ is denoted by ${Q_h\coloneqq \bigcup_{n=1}^N Q_{h,n}}$. To simplify the presentation and the analysis we do not allow a change of triangulation between time slabs.  
\textcolor{black}{Let $V_h^{m}$, $m\in \mathbb{N}$, be the standard $H^1(\Omega)$-conforming finite element space of piecewise polynomials up to degree $m$ on the  triangulation $\T$}. For $m_{\spa},m_{\ti}\in \mathbb{N}$ a space-time finite element product space is given  by 
\begin{equation}
	V_h^{m_{\spa}, m_{\ti}}\coloneqq \big\{ v:Q \rightarrow \R : v_{|Q_n}(\bx,t)=\sum_{i=0}^{m_{\ti}}t^i v_i(\bx), ~~v_i \in V_h^{m_{\spa}},~ 1 \leq n \leq N\big\}.\label{st_fe_space}
\end{equation}
Note that functions from $V_h^{m_{\spa},m_{\ti}}$ are continuous in space but may be discontinuous in time  at the time interval boundaries $t_n$. 
This finite element space  is used both in the finite element discretization of the surface  PDE studied in Section~\ref{section_application} and, with possibly different degrees $m_{\spa}, m_{\ti}$,  in the parametric mapping used for the higher order geometry approximation. For the geometry approximation we assume a $\phi_h \in V_h^{k_{g,\spa}, k_{g,\ti}}$, i.e., with degree $k_{g,\spa}$ in $\bx$ and $k_{g,\ti}$ in $t$. 
Such a  $\phi_h$ may be an
interpolation of $\phi$, if the latter is available. 
In applications, $\phi$ may be determined implicitly by a level set equation.   
For a higher order surface approximation we need a sufficiently accurate approximation $\phi_h \approx \phi$. Requirements for the accuracy of $\phi_h$ as approximation of $\phi$ are specified below, cf. Assumption~\ref{ass_phi_h}.

The evaluation of integrals on the zero level of a higher order polynomial (e.g., $\phi_h$) requires (too) much computational effort.  This motivates the use of an approach that is similar to the classical parametric finite element method. Below we explain the basic structure of the space-time parametric technique introduced and analyzed in \cite{HLP2022,HeimannLehrenfeld}. 
For further details we refer to this literature and to \cite{preuss2018higher,Diss_Sass,sass2022}.
\textcolor{black}{\begin{remark} \label{FEblend} \rm
                  In \cite{HLP2022,HeimannLehrenfeld} two different techniques for construction of the space-time parametric mapping  are presented, one based on the  \textit{FE blending} and the other one based on a \textit{smooth blending}. For the purpose of space-time discretization of surface PDEs the technique based on FE blending is the most natural (and simpler) choice, cf. \cite[Remark 3.1]{HeimannLehrenfeld}. In the present paper we therefore restrict to the construction of $\Theta_h^n$ based on FE blending.
                 \end{remark}
}
 \ \\[1ex]
\textcolor{black}{By $\mathcal{I}_{m}$ we denote the spatial nodal interpolation operator in $V_h^m$}.
 The spatially piecewise linear nodal interpolation is denoted by $\hat{\phi}_h\in V_h^{1,k_{g,\ti}}$: 
 \begin{equation}
 	\hat{\phi}_h(\cdot,t)\coloneqq \mathcal{I}_1\phi_h(\cdot,t)\in V_h^1, \quad  t\in [0,T],\label{phi_h_dach_definition}
 \end{equation}
 and  its corresponding piecewise planar zero level  at time $t$
 \begin{equation*}
 	\Gamma_{\lin}(t)\coloneqq \left\lbrace\bx\in \Omega : \hat{\phi}_h(\bx,t)=0\right\rbrace,
 \end{equation*}
 which is easy to compute.
 This surface  $\Gamma_{\lin}(t)$ is a (only)  second order accurate approximation of $\Gamma(t)$. The corresponding discrete space-time surface is denoted by
 \begin{equation*}
 S^n_{\lin} \coloneqq \left\lbrace (\bx,t)\in Q_n: \hat{\phi}_h(\bx,t)=0\right\rbrace,  \quad S_{\lin}\coloneqq \bigcup_{n=1}^N S_{\lin}^n.
 \end{equation*}
 We consider all elements that are cut by the piecewise linear surface at a point in time within one time slab and the corresponding space-time prisms, i.e., 
 \begin{align*}
 	\T_n^\Gamma & \coloneqq \left\lbrace K\in \mathcal{T}:  \mathrm{meas}_2((K\times I_n)\cap \Gamma_{\lin}(t))>0 \text{ for a }t\in I_n\right\rbrace, \\ 
 	Q_{h,n}^S & \coloneqq\T_n^{\Gamma}\times I_n, 
 	\quad n\in \{1,\dots,N\},
 \end{align*}
 and $Q_{h}^S:= \cup_{n=1}^N Q_{h,n}^S$. The corresponding domains are denoted by $\Omega_n^\Gamma \coloneqq \{\bx\in K: K\in \T_n^{\Gamma}\}$, $Q^S_n=\Omega_n^\Gamma \times I_n $ and $Q^S =\cup_{n=1}^N Q^S_n$. In the remainder we assume $Q^{S}\subset U$ (which is satisfied for $h$ and $\Delta t$ sufficiently small).
 
 The construction of the mesh deformation is based on an ``ideal'' spatial mapping $\Psi_t: U(t) \to \R^3$ that depends on the level set function $\phi$ and  for which $\Psi_t\big(\Gamma_{\lin}(t)\big)= \Gamma(t)$ holds. A corresponding space-time mapping is given by $\Psi: \, U \to \R^4$, $\Psi(\bx,t):= (\Psi_t(\bx),t)$. This mapping $\Psi$ is continuous on $U$. \textcolor{black}{By $\mathcal{P}^{k_{g,\ti}}(I_n)$ we denote the space of polynomials up to degree $k_{g,\ti}$ on $I_n$.}
 Let $\tau_m^n \in I_n$, $m=0,\dots, k_{g,\ti}$, be discrete points and $\mathcal{X}_{\tau_m^n} \in \mathcal{P}^{k_{g,\ti}}(I_n)$  corresponding finite element nodal basis functions.  On $I_n$ the function $\Psi_t$ is interpolated in $t$ by
 $\tilde \Psi_t(\bx)= \sum_{m=0}^{k_{g,\ti}} \mathcal{X}_{\tau_m^n}(t) \Psi_{\tau_m^n}(\bx)$. 
 For given $t = \tau_m^n$ a mapping  $\Theta_{h,t}^n \approx \Psi_t$ is constructed. This mapping, which deforms all elements of the triangulation $\T_n^{\Gamma}$ and \emph{depends (only) on $\phi_h$}, is such that $\Theta_{h,t}^n\in  \big(V_h^{k_{g,\spa}})^3$.
 The spatial mesh deformation  ${\Theta_{h,s}^{n}\in (V_h^{k_{g,\spa},k_{g,\ti}})^3}$ and the corresponding space-time mapping $\Theta_h^n$ are defined by
 \begin{equation}
 \Theta_{h,\spa}^{n}(\bx,\, t):=	\sum_{m=0}^{k_{g,\ti}}\mathcal{X}_{\tau^n_m}(t)\Theta_{h,\tau^n_m}^{n}(\bx), \quad     \Theta_{h}^{n}(\bx,t) \coloneqq	\big(\Theta_{h,\spa}^{n}(\bx,\, t), t\big), \quad (\bx,t)\in Q_{n}^S,\label{Theta_Def}
 \end{equation}
 cf. Fig.~\ref{theta_picture} for an illustration.
 \input{theta_picture}
 
 We note that opposite to the ideal mapping $\Psi$, the mesh deformation $\Theta_{h}^{n}$ is \emph{not} necessarily continuous between time slabs, cf. Remark~\ref{remconsist}.
Using this mesh deformation we obtain the \emph{space-time surface approximation}
 \begin{align} \label{Sapprox}
 	S_h^n&\coloneqq \left\lbrace (\bx,t)\in Q_n: (\bx,t) \in \Theta_h^n(S^n_{\lin})\right\rbrace, ~1 \leq n \leq N,\quad S_{h}\coloneqq \bigcup_{n=1}^N S_h^n. 
 \end{align}
 The accuracy of the approximation $S_h \approx S$ is  discussed in Section~\ref{secGeometry}. 
 The corresponding time slices of $S_h^n$ are denoted by
 \begin{equation}
 	\Gamma_h^n(t)\coloneqq \left\lbrace\bx\in\R^3: (\bx,t)\in S_h^n\right\rbrace,\quad t\in I_n, \quad n=1,\dots,N.\label{gamma_h_def}
 \end{equation}
For a given $n$ and almost everywhere on $Q_{n}^S$ we define (with $D\Theta$ the spatial Jacobian of $\Theta$)
\begin{equation}
	\bn_{\lin} \coloneqq  \frac{\nabla\hat{\phi}_h}{\norm{\nabla\hat{\phi}_h}}, \quad \bn_h\circ \Theta_h^n\coloneqq \frac{(D\Theta_{h,\spa}^n)^{-T}\bn_{\lin}}{\norm{(D\Theta_{h,\spa}^n)^{-T}\bn_{\lin}}},  
	\label{nh_defs}
\end{equation}
with $\hat{\phi}_h$ as defined in \cref{phi_h_dach_definition} and $\Theta_{h,\spa}^n$ as in \Cref{Theta_Def}.
Restricted to $\Gamma_{\rm lin}(t)$ these vectors are indeed the unit (spatial) normal vectors to the surface approximations $\Gamma_{\rm lin}(t)$ and $\Gamma_h^n(t)$, respectively.	On $Q_{n}^S$ we also introduce space-time variants:
\begin{equation}
	\bn_{S_{\lin}} \coloneqq  \frac{\nabla_{(\bx,t)} \hat{\phi}_h}{\norm{\nabla_{(\bx,t)} \hat{\phi}_h}},\quad 
	\bn_{S_{h}}\circ\Theta_h^n \coloneqq  \frac{(D_{(\bx,t)}\Theta_h^n)^{-T}\bn_{S_{\lin}}}{\norm{(D_{(\bx,t)}\Theta_h^n)^{-T}\bn_{S_{\lin}}}} .\label{nslin_def_anddefintion_nsh_higherorder}
\end{equation}
On $S_{\rm lin}^n$  these normals are the unit space-time normals to the respective space-time surfaces, cf.  \cite[Subsection 4.2.4]{Diss_Sass} for more discussion.

We introduce a discrete variant of the normal velocity $V=\bw\cdot\bn$, cf. \eqref{formnormalvel}. Almost everywhere on $S_h^n$, $n=1,\ldots,N$, we define 
\begin{equation}
	V_h\coloneqq \frac{\frac{-\partial \left(\hat{\phi}_h\circ(\Theta_h^n)^{-1}\right)}{\partial t}}{\norm{\nabla\left(\hat{\phi}_h\circ(\Theta_h^n)^{-1}\right)}}=\frac{-\left(\ddt{(\Theta_{h}^n)^{-1}}{t}\right)^{T}\left(\nabla_{(\x,t)}\hat{\phi}_h\circ(\Theta_h^n)^{-1}\right)}{\norm{(D\Theta_{h,\spa}^n)^{-T}\nabla\hat{\phi}_h}\circ(\Theta_h^n)^{-1}}.\label{Vh_def_ho}
\end{equation}
We will use this $V_h$ in Section~\ref{section_basic_geometry_est}. 
Its analogue on $S_{\lin}$ reads
\begin{equation}
	V_{\lin}\coloneqq \frac{-1}{\norm{\nabla\hat{\phi}_h}}\frac{\partial \hat{\phi}_h}{\partial t}.\label{Vh_def}
\end{equation}

\begin{remark} \label[remark]{remconsist}\rm
	The construction of $\Theta_h^n$ based on FE blending, cf. Remark~\ref{FEblend},  leads to surface discontinuities between time slabs (which is not the case for the smooth blending approach). We briefly explain the origin of these discontinuities, cf. \cite[Remark 5.3]{HeimannLehrenfeld}.
	Take a fixed $t=t_n$ and let $K \in \T$ be a tetrahedron that is cut by $\Gamma_{\rm lin}(t_n)$. By construction we then have $K \in \T_n^\Gamma \cap \T_{n+1}^\Gamma$. The sets of neighboring tetrahedra of $K$ in $ \T_n^\Gamma$ and in $\T^\Gamma_{n+1}$, however, are not necessarily the same. Due to an averaging at the finite element nodes used in the construction of  $\Theta_{h,t}^n$, cf.  \cite{lehrenfeld2016high,grande2018analysis,HeimannLehrenfeld},  the mappings $\Theta_{h,t_n}^{n}$ and $\Theta_{h,t_n}^{n+1}$ are in general not the same on $K$.
	Thus, for certain $\bx\in \Omega_n^{\Gamma}\cap \Omega^\Gamma_{n+1}$ we have
	\begin{equation*}
		\Theta_{h,s}^{n+1}(\bx,t_n)=\Theta_{h,t_n}^{n+1}(\bx)\neq\Theta_{h,t_n}^{n}(\bx)=\Theta_{h,s}^{n}(\bx,t_n), 
	\end{equation*}
	which in particular implies $\Gamma_h^{n+1}(t_n) \neq \Gamma^n(t_n)$. In a discretization method the transfer of information between time slabs has to take this possible surface discontinuity into account, cf. Section~\ref{secmethod}. 
\end{remark}

\subsection{Extension techniques}\label{ext_tech}

We briefly recall a standard closest point projection that we use to extend functions, that are  defined only  on $S$ or $S_h$, to a neighborhood.  We extend $\n$, cf.~\ref{A1},  spatially in a canonical way through the spatial signed distance function. We use the same \textit{spatial-only} extension for the \textit{space-time} normal $\n_S$. Let  $U$ be a sufficiently small neighborhood of $S$ such that the spatial signed distance function to $\Gamma(t)$, denoted by $\delta(\cdot,t) :U(t)  \rightarrow\mathbb{R}$ is $C^1$. The extension of $\bn$ is given by 
$
	\n(\x,t)\coloneqq \nabla \delta(\x,t)$, $(\bx,t) \in U$. 
 The (spatial) closest point projection is defined as
\begin{equation}
	\p:U\rightarrow S,\quad  \p(\x,t)\coloneqq (\x-\delta(\x,t)\mathbf{n}(\x,t),t).\label{p_nDEF}
\end{equation}
This  induces the unique decomposition
\begin{equation*}
	(\x,t)=\p(\x,t)+\delta(\x,t)(\n(\x,t),0), \quad (\x,t)\in U.
\end{equation*} 
It follows that  the restriction $\p\restrict{S_h^n}:S_h^n\rightarrow S^n$, $n=1,\ldots,N$, is bijective. 
For $n=1,\ldots,N$, we define $\p^{-1}_n:S^n\rightarrow S_h^n$, $\p^{-1}_n(\x,t)\coloneqq(\p\restrict{S_h^n})^{-1}(\x,t)$. For a  function $v:S\rightarrow \mathbb{R}$ we define its extension $v^e:U\rightarrow \mathbb{R}$ by
\begin{equation}
	v^e(\x,t)\coloneqq v(\p(\x,t)).\label{ext_def}
\end{equation}
Similarly, for a  function $v_h:S_h^n\rightarrow \mathbb{R}$ we define its extension $v_h^{l}:U^n:=U\restrict{\R^3 \times I_n}\rightarrow \mathbb{R}$ by
\begin{equation}
	v_h^{l}(\x,t)\coloneqq\begin{cases}v_h(\p^{-1}_n(\x,t)), \quad &(\x,t)\in S^n\\v_h^{l}(\p(\x,t)),\quad &(\x,t)\in  U^n \backslash S^n\end{cases}.\label{liftingdef}
\end{equation}
Both extensions of functions on $S$ and on $S_h^n$ are constant extensions in the direction of $\n$. Also note that for the extended normal $\bn= \nabla \delta$ we have  $\n=\n^e$.
\begin{remark}\label{st-extension} \rm
	In the analysis below we use extensions as in \eqref{ext_def} and \eqref{liftingdef} that are based on the \emph{spatial} closest point projection  \eqref{p_nDEF}. In our space-time setting it may seem (more) natural to use a space-time closest point projection, i.e., $\p_S:U\rightarrow S$ with $\p_S(\bx,t):={\rm argmin}_{\bz \in S}\|(\bx,t)-\bz\|$. An important difference to the case with the closed smooth surface $\Gamma(t)$ above is that the space-time surface $S$ and the time slab surfaces $S^n$ have boundaries. It is easy to see that due to this, in general the mappings $\p_S\restrict{S_h^1}$ and  $\p_S\restrict{S_h^N}$ (i.e. close to $\partial S$) are not injective. Also in general we have  ${\rm Range}\big(\p_S\restrict{S_h^n}) \neq  S^n$. In our analysis it is important (and natural) to use a bijection between $S^n$ and $S_h^n$ and time slab wise extensions.  The space-time closest point projection is clearly not very suitable for constructing such a bijection and extensions. For these reasons we use the construction based on the spatial closest point projection.  
\end{remark}

\section{Basic properties of the space-time surface approximation}\label{section_basic_geometry_est}
In order to analyze dicretization errors of  finite element methods that use the surface approximation $S_h$ one needs information about how certain surface characteristics on $S$ relate to corresponding ones on $S_h$. In this section we present such results. We derive a formula that relates the space-time surface measure on $S$ to that on $S_h$. We give a formula for the space-time normal on $S_h$ that can be directly compared to the formula \eqref{ns_schreibweise} for the space-time normal on $S$. Furthermore we present an analogue of the integral transformation formula \eqref{transformationsformelMAIN}  for the space-time surface approximation $S_h$ and give a partial integration formula on the space-time surface approximation. \\
We start with a comparison of surface measures on $S$ and $S_h$. Formulas that relate surface measures on $\Gamma(t)$ and $\Gamma_h(t)$ are known in the literature. We recall a result derived in \cite[Proposition 2.5]{demlow2009higher}. For any $t\in I_n$ let the two-dimensional surface measures of $\Gamma(t)$ and $\Gamma^n_h(t)$ be denoted by $\dif s$ and $\dif s_h$, respectively. On $U(t)$ we define $\kappa_i\coloneqq\frac{\kappa_i^e}{1+ \delta \kappa_i^e}$, where the principal curvatures $\kappa_1$, $\kappa_2$ and $0$ are the three eigenvalues of the Weingarten mapping $\mathbf{H}\coloneqq D \n$ on $U(t)$. On $\Gamma_h(t)$ let $\mu_h$ satisfy ${\mu_h\dif s_h=\dif s\circ \p}$. On $\Gamma_h(t)$ we then have 
	\begin{equation} \label{formelmuh}
		\mu_h=\mathbf{n}\cdot \mathbf{n}_{h}\prod_{i=1}^{2}(1-\delta\kappa_i).
\end{equation}

The  analysis in \cite[Proposition 2.5]{demlow2009higher}, which also applies to $d$-dimensional hypersurfaces with $d \neq 2$,  is \emph{not} applicable to the space-time surface $S$ and its approximation $S_h$. The reason for this is that in  our setting we want to use (only) the space-distance function $\delta(\cdot,t)$  to $\Gamma(t)$ and not a space-time distance function to $S$. The result in the following lemma yields a useful result. A proof is given in the Appendix.
\begin{lemma} \label[lemma]{lemmeasure}
	Let the three-dimensional surface measures of $S$ and $S_h$ be denoted by $\dif \sigma$ and $\dif \sigma_h$, respectively. Let  $\mu_h^S$ be such that ${\mu_h^S\dif \sigma_h=\dif \sigma\circ \p}$ on $S_h^n$. We define  $\n_0^T\coloneqq\left(\n^T,0\right)$. For $1 \leq n \leq N$ we have, with $\alpha$ as in \eqref{ns_schreibweise}, 
	\begin{align}
		\mu_h^S&=\alpha^e\, \n_0\cdot \mathbf{n}_{S_h}\prod_{i=1}^{2}(1-\delta\, \kappa_i) \quad \text{on}~~S_h^n. \label{formelmuhn}
	\end{align}
\end{lemma}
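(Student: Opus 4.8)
The plan is to reduce the statement to the known spatial measure relation \eqref{formelmuh} of \cite{demlow2009higher} by \emph{factorizing} the three-dimensional space-time measures on $S$ and $S_h^n$ into a two-dimensional spatial measure and the time variable, exploiting that the closest point projection \eqref{p_nDEF} acts only in space. Concretely, I would first establish the two coarea-type identities
\[
\dif\sigma = \alpha\,\dif s\,\dif t \quad\text{on } S^n, \qquad \dif\sigma_h = \alpha_h\,\dif s_h\,\dif t \quad\text{on } S_h^n, \quad \alpha_h := \sqrt{1+V_h^2}.
\]
The first is exactly the transformation formula \eqref{transformationsformelMAIN}. For the second I would use that, by \eqref{Sapprox}, $S_h^n$ is almost everywhere the zero level of $\psi_h := \hat{\phi}_h\circ(\Theta_h^n)^{-1}$, so that at a.e.\ point its tangent space is spanned by $\{(\tau_1,0),(\tau_2,0),(V_h\bn_h,1)\}$, where $\tau_1,\tau_2$ is an orthonormal basis of $T\Gamma_h^n(t)$ and $(V_h\bn_h,1)$ is the time-like tangent vector encoding the normal motion (one checks $(V_h\bn_h,1)\cdot\bn_{S_h}=0$). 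The associated Gram matrix is $\operatorname{diag}(1,1,1+V_h^2)$, whose square-root determinant is the Jacobian factor $\alpha_h$; the same computation on $S$ reproduces $\alpha$ and hence \eqref{transformationsformelMAIN}.

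With these factorizations in hand, I would test the defining relation $\mu_h^S\,\dif\sigma_h=\dif\sigma\circ\p$ against an arbitrary $f$ on $S^n$. Using $\dif\sigma=\alpha\,\dif s\,\dif t$ and Fubini in time, applying \eqref{formelmuh} slice-wise at each fixed $t$ (here it is essential that $\p$ preserves $t$, so that $\alpha\circ\p=\alpha^e$ and $\dif s\circ\p=\mu_h\,\dif s_h$ with $\mu_h=\bn\cdot\bn_h\prod_{i=1}^2(1-\delta\kappa_i)$), and finally re-assembling via $\dif\sigma_h=\alpha_h\,\dif s_h\,\dif t$, yields
\[
\int_{S^n} f\,\dif\sigma = \int_{S_h^n}(f\circ\p)\,\frac{\alpha^e\mu_h}{\alpha_h}\,\dif\sigma_h.
\]
Since $f$ is arbitrary this identifies $\mu_h^S=\alpha^e\mu_h/\alpha_h$ almost everywhere on $S_h^n$.

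It then remains to rewrite $\mu_h/\alpha_h$ in the claimed form. Analogously to \eqref{ns_schreibweise}, the level set description of $S_h^n$ gives the discrete space-time normal $\bn_{S_h}=\alpha_h^{-1}(\bn_h,-V_h)$, consistent with \eqref{nslin_def_anddefintion_nsh_higherorder} and with the definition of $V_h$ in \eqref{Vh_def_ho}. With $\n_0=(\n,0)$ one computes $\n_0\cdot\bn_{S_h}=\alpha_h^{-1}\,\bn\cdot\bn_h$, so that
\[
\mu_h^S=\frac{\alpha^e}{\alpha_h}\,\bn\cdot\bn_h\prod_{i=1}^2(1-\delta\kappa_i)=\alpha^e\,\n_0\cdot\bn_{S_h}\prod_{i=1}^2(1-\delta\kappa_i),
\]
which is \eqref{formelmuhn}.

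The main obstacle will be the low regularity of $S_h^n$: it is only Lipschitz, being piecewise smooth within a time slab (with kinks across element interfaces) and discontinuous across time slabs, so the tangent space, the normal $\bn_{S_h}$, and $V_h$ exist only almost everywhere. Consequently the factorization $\dif\sigma_h=\alpha_h\,\dif s_h\,\dif t$ together with the slice-wise use of \eqref{formelmuh} must be justified in this a.e.\ sense and combined through an area/coarea argument rather than a classical change of variables. The computational heart is verifying that $(V_h\bn_h,1)$ is genuinely tangent to $S_h^n$ almost everywhere, i.e.\ that $V_h$ in \eqref{Vh_def_ho} is the normal velocity of the moving level set $\Gamma_h^n(t)=\{\psi_h(\cdot,t)=0\}$.
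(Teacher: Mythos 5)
Your proposal is correct, but it follows a genuinely different route from the paper's proof. The paper proves \eqref{formelmuhn} by one direct computation: it takes a local parametrization $\f$ of $S_h^n$, performs a singular value decomposition of its Jacobian (arranged so that the fourth left singular vector is $\n_{S_h}$), parametrizes $S^n$ by $\p\circ\f$, and evaluates the resulting Gram determinant via Schur-complement identities; the factor $\alpha^e$ appears only at the very end through $\delta_t=V^e$ on $S_h^n$, and Demlow's spatial formula \eqref{formelmuh} is never invoked. You instead run the paper's downstream logic in reverse: you first establish, directly from the level set description $\psi_h=\hat{\phi}_h\circ(\Theta_h^n)^{-1}$ of $S_h^n$, the normal relation $\n_{S_h}=(1+V_h^2)^{-1/2}(\n_h^T,-V_h)^T$ (this is Lemma~\ref{lemma_nsh_repres}, and your chain-rule derivation of it from \eqref{nh_defs}, \eqref{nslin_def_anddefintion_nsh_higherorder}, \eqref{Vh_def_ho} is simpler than the paper's) and the factorization $\dif \sigma_h=\sqrt{1+V_h^2}\,\dif s_h\,\dif t$ (this is Corollary~\ref{fancyformelcoro}); then you sandwich \eqref{formelmuh}, applied slice-wise in $t$, between this factorization and the continuous one \eqref{transformationsformelMAIN} to identify $\mu_h^S=\alpha^e\mu_h/\sqrt{1+V_h^2}$, which is \eqref{formelmuhn} and also reproduces the paper's \eqref{helptrans} and Remark~\ref{remVh}. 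Since the paper derives Lemma~\ref{theo_trafo_discrete} and Corollary~\ref{fancyformelcoro} \emph{from} Lemma~\ref{lemmeasure}, your argument avoids circularity only because you prove those two ingredients independently --- which you do. As for what each approach buys: the paper's proof is self-contained linear algebra on the fixed Lipschitz surface and never needs to track the evolving slices $\Gamma_h^n(t)$; yours is more modular, leverages the known spatial result, and delivers Lemma~\ref{lemma_nsh_repres} and Corollary~\ref{fancyformelcoro} with cleaner proofs as by-products.

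Two points in your sketch need tightening. First, the Gram matrix of a tangent \emph{basis} is not by itself a measure factorization: on each smooth piece $K_S\in\T_{S_h^n}$ you need an actual parametrization $(\theta,t)\mapsto(X(\theta,t),t)$, whose time derivative is $V_h\n_h$ plus a tangential component, and you must check that the tangential component drops out of the Gram determinant (it does, by the projection formula); alternatively, apply the coarea formula on the Lipschitz manifold $S_h^n$ to the coordinate function $u(\x,t)=t$, for which $\norm{\nabla_{S_h}u}=(1+V_h^2)^{-1/2}$ by your normal relation. Second, be aware that the paper's remark following Corollary~\ref{fancyformelcoro} asserts that the parametrization approach ``does not work'' in the discrete case for lack of a velocity field transporting $\Gamma_h^n(t)$; your argument in effect shows that this obstruction concerns only flow-based parametrizations, since graph-based piecewise parametrizations (or the coarea formula) require nothing beyond the a.e.-defined normal velocity $V_h$ --- exactly the ``computational heart'' you flag, and it does go through.
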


Note that the result in \eqref{formelmuhn} is general in the sense that it involves only geometric quantities related to $S_h$ and $S$. The way $S_h$ is constructed, e.g., in our case based on the space time mesh deformation $\Theta_h^n$, does not play a role. 
Based on the formula for $\mu_h^S$  in \cref{formelmuhn}  we now derive a  formula   that relates the space-time surface integral to a product integral.
\begin{lemma}\label{theo_trafo_discrete}
For $g \in L^2(S_h^n)$, $n=1, \ldots,N$,  we have
	\begin{equation} \label{transF1}
		\int_{t_{n-1}}^{t_n}\int_{\Gamma^n_h(t)}g \dif s_h\dif t=\int_{S_h^n} g \frac{\n_0\cdot \n_{S_h}}{\n\cdot \n_h}\dif \sigma_h, 
	\end{equation}
	with $\bn_0^T=(\bn^T, 0)$.
\end{lemma}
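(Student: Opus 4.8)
The plan is to reduce \eqref{transF1} to the three measure identities already at our disposal: Demlow's spatial relation \eqref{formelmuh}, the space-time relation \eqref{formelmuhn} from \Cref{lemmeasure}, and the continuous transformation formula \eqref{transformationsformelMAIN}. The key observation is that chaining these together produces the factors $\alpha^e$ and $\prod_{i=1}^2(1-\delta\kappa_i)$ twice, once in a numerator and once in a denominator, so that they cancel and leave exactly the weight $\frac{\mathbf{n}_0\cdot\mathbf{n}_{S_h}}{\mathbf{n}\cdot\mathbf{n}_h}$. This cancellation is the whole content of the lemma, and the specific form of the stated weight (a ratio of the two ``cosine'' factors $\mathbf{n}_0\cdot\mathbf{n}_{S_h}$ and $\mathbf{n}\cdot\mathbf{n}_h$) is precisely what one expects from $\mu_h^S/(\alpha^e\mu_h)$.

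First I would transport the product integral from the discrete slices to the smooth ones. For fixed $t\in I_n$ the spatial closest point projection restricts to a bijection $\Gamma_h^n(t)\to\Gamma(t)$, and \eqref{formelmuh} gives $\int_{\Gamma_h^n(t)} g\,\dif s_h = \int_{\Gamma(t)} g^l/\mu_h^l\,\dif s$, where a superscript $l$ denotes the normal lift onto $S^n$ (so that $g^l\circ\mathbf{p}=g$ and $\mu_h^l\circ\mathbf{p}=\mu_h$ on $S_h^n$). Integrating over $I_n$ and applying the slab version of \eqref{transformationsformelMAIN} to the integrand $g^l/\mu_h^l$ then yields $\int_{t_{n-1}}^{t_n}\int_{\Gamma_h^n(t)} g\,\dif s_h\,\dif t = \int_{S^n} \frac{1}{\alpha}\frac{g^l}{\mu_h^l}\,\dif\sigma$.

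Next I would pull this surface integral back onto $S_h^n$ using \Cref{lemmeasure}. Since $\mu_h^S\,\dif\sigma_h=\dif\sigma\circ\mathbf{p}$, and on $S_h^n$ we have $(1/\alpha)\circ\mathbf{p}=1/\alpha^e$, $g^l\circ\mathbf{p}=g$ and $\mu_h^l\circ\mathbf{p}=\mu_h$, the change of variables gives $\int_{S^n} \frac{1}{\alpha}\frac{g^l}{\mu_h^l}\,\dif\sigma = \int_{S_h^n} g\,\frac{\mu_h^S}{\alpha^e\mu_h}\,\dif\sigma_h$. Substituting $\mu_h^S=\alpha^e\,\mathbf{n}_0\cdot\mathbf{n}_{S_h}\prod_{i=1}^2(1-\delta\kappa_i)$ and $\mu_h=\mathbf{n}\cdot\mathbf{n}_h\prod_{i=1}^2(1-\delta\kappa_i)$ makes $\alpha^e$ and the curvature products cancel, leaving the asserted weight and hence \eqref{transF1}.

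I expect the only real difficulty to be the change-of-variables bookkeeping rather than any analytic estimate. Specifically, I must make sure that the spatial quantities $\delta$ and $\kappa_i$ entering $\mu_h$ and $\mu_h^S$ are evaluated at the same point of the slice $\Gamma_h^n(t)\subset S_h^n$, so that the curvature factors are genuinely identical and cancel, and that the lift $g^l$ and the weight $\mu_h^l$ compose with $\mathbf{p}$ as claimed (which relies on $\mathbf{p}\restrict{S_h^n}$ being a bijection onto $S^n$, as already noted after \eqref{p_nDEF}). Minor points to verify are that $g^l/\mu_h^l\in L^2(S^n)$, which holds because $\mu_h$ is bounded away from zero for $h$ small, and that \eqref{transformationsformelMAIN} may legitimately be restricted to a single time slab.
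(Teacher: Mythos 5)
Your proposal is correct and follows essentially the same route as the paper's proof: both chain \eqref{formelmuh}, the time-slab restriction of \eqref{transformationsformelMAIN}, and the measure relation \eqref{formelmuhn} of Lemma~\ref{lemmeasure}, and conclude by the cancellation of $\alpha^e$ and the curvature products in $\mu_h^S/(\alpha^e\mu_h)$. The additional bookkeeping points you flag (bijectivity of $\p\restrict{S_h^n}$, integrability of $g^l/\mu_h^l$, slab-wise validity of the transformation formula) are left implicit in the paper but are exactly the right things to check.
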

\begin{proof}
	We conclude using the continuous integral transformation \cref{transformationsformelMAIN} and the definitions of $\mu_h$, $\mu_h^S$ given in \cref{formelmuh} and \cref{formelmuhn}
	\begin{align}
		\begin{aligned}\label{calcu_fancy}
			&\int_{t_{n-1}}^{t_n}\int_{\Gamma^n_h(t)}g  \dif s_h\dif t=\int_{t_{n-1}}^{t_n}
			\int_{\Gamma(t)}g^l \frac{1}{\mu_h^l }\dif s\dif t
			=\int_{S^n}g^l \frac{1}{\mu_h^l \alpha }\dif \sigma\\
			&\qquad=\int_{S_h^n}g \frac{1}{\mu_h \alpha^e}\mu_h^S \dif \sigma_h
			=\int_{S_h^n}g \frac{\n_0\cdot \n_{S_h}}{\n \cdot \n_h}\dif \sigma_h.
		\end{aligned}
	\end{align}
\end{proof}

The result \eqref{transF1} is not satisfactory, yet, because it involves the normal vector $\bn$ of the exact surface. Below, in  \Cref{fancyformelcoro}, we derive a formula that is a direct discrete   analogue of \cref{transformationsformelMAIN} and uses only quantities available in the discretization method.  For this we need a discrete analogue of the relation \cref{ns_schreibweise}, derived in Lemma~\ref{lemma_nsh_repres} below. Note that 
\cref{ns_schreibweise} directly  follows from \eqref{A1} and the level set equation $\ddt{\phi}{t}+\w\cdot\nabla \phi =0$.
 We proceed analogously in the discrete setting. Due to the more complicated definitions of $\n_h$, $\n_{S_h}$ and $V_h$ the derivations are more technical. We start with auxiliary identities in the next lemma. The first two results \eqref{nsh_repres_lin}-\eqref{nabla_st_to_s} are  simple discrete analogues of relations for the exact level set function.  The third  identity \cref{vh_to_vlin} is basically a transformation formula between the ``linear'' normal velocity $V_{\lin}$ and its higher order version $V_h$. 
\begin{lemma}\label[lemma]{lemma_technical_vh}
	The following identities hold:
	\begin{alignat}{2}
        \n_{S_{\lin}} & =\frac{1}{\sqrt{1+V_{\lin}^2}}\begin{pmatrix}
			\n_{\lin}\\
		-V_{\lin} \end{pmatrix} && \text{a.e. on } S_{\lin} \label{nsh_repres_lin}\\
		\norm{\nabla_{(\x,t)}\hat{\phi}_h}&=\sqrt{1+V_{\lin}^2}\norm{\nabla\hat{\phi}_h}&& \text{a.e. on } S_{\lin},\label{nabla_st_to_s}\\
		\sqrt{1+V_h^2}&=\frac{\sqrt{1+V_{\lin}^2}\norm{(D_{(\x,t)}\Theta_h^n)^{-T}\n_{S_{\lin}}}}{\norm{(D\Theta_{h,\spa}^n)^{-T}\n_{\lin}}}\circ(\Theta_h^n)^{-1}\quad&& \text{a.e. on }S_h^n,~ 1 \leq n \leq N.\label{vh_to_vlin}
	\end{alignat}
\end{lemma}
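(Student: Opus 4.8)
The plan is to establish the three identities in sequence, obtaining \eqref{nsh_repres_lin} and \eqref{nabla_st_to_s} together from a direct splitting of the space-time gradient, and then deriving \eqref{vh_to_vlin} from the block structure of the space-time Jacobian of $\Theta_h^n$. For \eqref{nsh_repres_lin} and \eqref{nabla_st_to_s} I would split the space-time gradient as $\nabla_{(\x,t)}\hat\phi_h = (\nabla\hat\phi_h,\ \pa_t\hat\phi_h)^T$ and insert the definitions \eqref{nh_defs} of $\bn_{\lin}$ and \eqref{Vh_def} of $V_{\lin}$, which give $\nabla\hat\phi_h = \norm{\nabla\hat\phi_h}\,\bn_{\lin}$ and $\pa_t\hat\phi_h = -\norm{\nabla\hat\phi_h}\,V_{\lin}$, hence $\nabla_{(\x,t)}\hat\phi_h = \norm{\nabla\hat\phi_h}\,(\bn_{\lin},\,-V_{\lin})^T$. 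Taking Euclidean norms and using $\norm{\bn_{\lin}}=1$ yields \eqref{nabla_st_to_s} at once, and dividing $\nabla_{(\x,t)}\hat\phi_h$ by its norm gives \eqref{nsh_repres_lin}. This is the exact discrete mirror of the continuous derivation of \eqref{ns_schreibweise} from \eqref{A1} and the level set equation.

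For \eqref{vh_to_vlin} the key observation is the block-triangular structure $D_{(\x,t)}\Theta_h^n = \begin{pmatrix} \bA & \bb \\ \mathbf 0^T & 1\end{pmatrix}$, with $\bA := D\Theta_{h,\spa}^n$ and $\bb := \pa_t\Theta_{h,\spa}^n$, which follows from $\Theta_h^n(\x,t) = (\Theta_{h,\spa}^n(\x,t),\,t)$. Inverting and transposing gives $(D_{(\x,t)}\Theta_h^n)^{-T} = \begin{pmatrix} \bA^{-T} & \mathbf 0 \\ -\bb^T\bA^{-T} & 1\end{pmatrix}$, while the last column of the (untransposed) inverse identifies $(\pa_t(\Theta_h^n)^{-1})\circ\Theta_h^n = (-\bA^{-1}\bb,\,1)^T$ via the chain rule. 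I would carry out the proof on $S_{\lin}^n$, i.e. prove the claimed equality composed with $\Theta_h^n$, and compose back with $(\Theta_h^n)^{-1}$ at the very end to recover \eqref{vh_to_vlin}.

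The computation then splits into two parts that assemble into the same Pythagorean sum. Applying $(D_{(\x,t)}\Theta_h^n)^{-T}$ to $\bn_{S_{\lin}}$ through \eqref{nsh_repres_lin} gives a vector of squared norm $\tfrac{1}{1+V_{\lin}^2}\big(\norm{\bA^{-T}\bn_{\lin}}^2 + (\bb^T\bA^{-T}\bn_{\lin} + V_{\lin})^2\big)$. Separately, composing the definition \eqref{Vh_def_ho} of $V_h$ with $\Theta_h^n$, inserting $(\pa_t(\Theta_h^n)^{-1})\circ\Theta_h^n = (-\bA^{-1}\bb,\,1)^T$ together with $\nabla\hat\phi_h = \norm{\nabla\hat\phi_h}\bn_{\lin}$ and $\pa_t\hat\phi_h = -\norm{\nabla\hat\phi_h}V_{\lin}$, cancels the common factor $\norm{\nabla\hat\phi_h}$ and gives $V_h\circ\Theta_h^n = (\bb^T\bA^{-T}\bn_{\lin} + V_{\lin})/\norm{\bA^{-T}\bn_{\lin}}$. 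Forming $1 + (V_h\circ\Theta_h^n)^2$ produces exactly $\big(\norm{\bA^{-T}\bn_{\lin}}^2 + (\bb^T\bA^{-T}\bn_{\lin} + V_{\lin})^2\big)/\norm{\bA^{-T}\bn_{\lin}}^2$, which equals $(1+V_{\lin}^2)\norm{(D_{(\x,t)}\Theta_h^n)^{-T}\bn_{S_{\lin}}}^2/\norm{\bA^{-T}\bn_{\lin}}^2$; taking square roots and recalling $\bA^{-T}\bn_{\lin} = (D\Theta_{h,\spa}^n)^{-T}\bn_{\lin}$ finishes the argument. The main obstacle I anticipate is purely bookkeeping: correctly relating the time-derivative of the inverse map in \eqref{Vh_def_ho} to the blocks $\bA,\bb$ of the forward Jacobian, and keeping the compositions with $\Theta_h^n$ and $(\Theta_h^n)^{-1}$ straight. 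The conceptual content is small — the combination $\bb^T\bA^{-T}\bn_{\lin} + V_{\lin}$ is simultaneously the temporal component of $(D_{(\x,t)}\Theta_h^n)^{-T}\bn_{S_{\lin}}$ and the numerator of $V_h\circ\Theta_h^n$, so the two sides are forced to agree.
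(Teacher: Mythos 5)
Your proposal is correct and follows essentially the same route as the paper's proof: the first two identities are obtained exactly as in the paper by writing $\nabla_{(\x,t)}\hat\phi_h=\norm{\nabla\hat\phi_h}(\n_{\lin}^T,-V_{\lin})^T$, and for \eqref{vh_to_vlin} both arguments exploit the block-triangular structure of the space-time Jacobian to split $\norm{(D_{(\x,t)}\Theta_h^n)^{-T}\n_{S_{\lin}}}^2$ into a spatial part (the paper's $g_\Theta$, your $\norm{\bA^{-T}\n_{\lin}}^2/(1+V_{\lin}^2)$) plus a temporal part identified with $V_h^2$ times the spatial part via the chain rule. The only difference is bookkeeping: you make the blocks $\bA,\bb$ and the inverse-transpose formula explicit and work on $S_{\lin}^n$ before composing back, whereas the paper works directly with $\ddt{(\Theta_h^n)^{-1}}{t}$ on $S_h^n$ and isolates the chain-rule step as the identity \eqref{66A}.
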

\begin{proof}
 From  the definitions of $\n_{\lin}$, $\n_{S_{\lin}}$ and $V_{\lin}$ in \cref{nh_defs}, \cref{nslin_def_anddefintion_nsh_higherorder} and  \cref{Vh_def} we get (a.e. on $S_{\lin}$)
	\begin{equation*}
		\n_{S_{\lin}}=\frac{\nabla_{(\x,t)}\hat{\phi}_h}{\norm{\nabla_{(\x,t)}\hat{\phi}_h}}=\frac{\nabla_{(\x,t)}\hat{\phi}_h}{\sqrt{1+V_{\lin}^2}\norm{\nabla\hat{\phi}_h}}=\frac{1}{\sqrt{1+V_{\lin}^2}}\begin{pmatrix}
			\n_{\lin}\\-V_{\lin}
		\end{pmatrix}.
	\end{equation*}
	From this the results \eqref{nsh_repres_lin} and \eqref{nabla_st_to_s} immediately follow. We now consider the identity \eqref{vh_to_vlin}.
	For the squared denominator in this relation we introduce the notation $g_\Theta\coloneqq \norm{(D\Theta_{h,\spa}^n)^{-T}\n_{\lin}}^2\circ(\Theta_h^n)^{-1}$.
	For the squared  numerator we get, using the structure of $\Theta_h^n$ (cf. \eqref{Theta_Def}) and \eqref{nsh_repres_lin}
	\begin{align}
		\begin{aligned}
			&\left( (1+V_{\lin}^2)\norm{(D_{(\x,t)}\Theta_h^n)^{-T}\n_{S_{\lin}}}^2 \right)\circ(\Theta_h^n)^{-1} \\ & = (1+V_{\lin}^2\circ(\Theta_h^n)^{-1}) \norm{D^{T}_{(\x,t)}(\Theta_h^n)^{-1}\n_{S_{\lin}} \circ(\Theta_h^n)^{-1} }^2\\
			& = \big(1+V_{\lin}^2\circ(\Theta_h^n)^{-1}\big)
			\norm{D^{T}(\Theta_{h,\spa}^n)^{-1}\left(\frac{\n_{\lin}}{\sqrt{1+V_{\lin}^2}}\circ(\Theta_h^n)^{-1}\right)}^2 \\ & \quad + \big(1+V_{\lin}^2\circ(\Theta_h^n)^{-1}\big)  \norm{\left(\ddt{(\Theta_{h}^n)^{-1}}{t}\right)^{T}\left(\n_{S_{\lin}}\circ(\Theta_h^n)^{-1}\right)}^2 \\
			& \quad = g_\Theta +  \norm{\left(\ddt{(\Theta_{h}^n)^{-1}}{t}\right)^{T}\left(\frac{\nabla_{(\bx,t)}\hat{\phi}_h}{\|\nabla \hat{\phi}_h\|} \circ(\Theta_h^n)^{-1}\right)}^2
			\label{proof_vh_to_vlin_1},
		\end{aligned}
	\end{align}
	where in the last equality we used the relation $\sqrt{1+ V_{\rm lin}^2} \|\nabla \hat\phi_h\| \n_{S_{\lin}} = \nabla_{(\bx,t)}\hat{\phi}_h $. 
	Hence, using $ \|\hat \phi_h\|\n_{\lin} = \nabla \hat \phi_h$ we conclude that it remains to verify the identity
	\begin{equation} \label{66A}
	 \frac{\norm{\left(\ddt{(\Theta_{h}^n)^{-1}}{t}\right)^{T}\left(\nabla_{(\bx,t)}\hat{\phi}_h \circ(\Theta_h^n)^{-1}\right)} }{
	\norm{(D\Theta_{h,\spa}^n)^{-T} \nabla \hat \phi_h}\circ(\Theta_h^n)^{-1}} = |V_h| :=
\frac{ |\ddt{\left(\hat{\phi}_h\circ(\Theta_h^n)^{-1}\right)}{t}|}{\norm{\nabla\left(\hat{\phi}_h\circ(\Theta_h^n)^{-1}\right)}}.
	  \end{equation}
The numerators in \eqref{66A} agree, which follows from the chain rule.  The denominators also agree, which again follows from the chain rule and the fact that the spatial derivative of the last component of $\Theta_h^n$ vanishes, cf. \cref{Theta_Def}.
\end{proof}

 We use \Cref{lemma_technical_vh} in the proof of the following lemma that relates the (higher order) discrete spatial normal $\n_h$ and the (higher order) discrete space-time normal $\n_{S_h}$ in a formula with the same structure as \eqref{ns_schreibweise}.
\begin{lemma}\label[lemma]{lemma_nsh_repres}
	Almost everywhere on $S_h^n$, $n=1,\ldots,N$, we have the identity
	\begin{equation}
		\n_{S_h}=\frac{1}{\sqrt{1+V_h^2}}\begin{pmatrix}
			\n_h\\-V_h
		\end{pmatrix}.\label{nsh_repres}
	\end{equation}
\end{lemma}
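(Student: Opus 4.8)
The plan is to prove the identity after precomposition with $\Theta_h^n$ and then remove the composition using bijectivity of $\Theta_h^n:S^n_{\lin}\to S_h^n$. The structural fact that drives everything is that the last component of $\Theta_h^n(\bx,t)=(\Theta_{h,\spa}^n(\bx,t),t)$ is the identity in $t$, cf. \eqref{Theta_Def}, so the space-time (spatial) Jacobian has a block form with vanishing last row, namely
\[
D_{(\bx,t)}\Theta_h^n = \begin{pmatrix} D\Theta_{h,\spa}^n & \partial_t\Theta_{h,\spa}^n \\ 0 & 1 \end{pmatrix}, \qquad
(D_{(\bx,t)}\Theta_h^n)^{-T} = \begin{pmatrix} (D\Theta_{h,\spa}^n)^{-T} & 0 \\ -(\partial_t\Theta_{h,\spa}^n)^{T}(D\Theta_{h,\spa}^n)^{-T} & 1 \end{pmatrix},
\]
the second matrix following from the block inverse formula. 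I would set $\bz\coloneqq (D_{(\bx,t)}\Theta_h^n)^{-T}\bn_{S_{\lin}}$, so that by definition \eqref{nslin_def_anddefintion_nsh_higherorder} we have $\bn_{S_h}\circ\Theta_h^n = \bz/\norm{\bz}$, and compute the four components of $\bz$ by inserting the representation \eqref{nsh_repres_lin} of $\bn_{S_{\lin}}$.

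For the spatial part this is immediate: the first three components of $\bz$ equal $\frac{1}{\sqrt{1+V_{\lin}^2}}(D\Theta_{h,\spa}^n)^{-T}\bn_{\lin}$, which by the definition \eqref{nh_defs} of $\bn_h$ is exactly $\frac{\norm{(D\Theta_{h,\spa}^n)^{-T}\bn_{\lin}}}{\sqrt{1+V_{\lin}^2}}\,(\bn_h\circ\Theta_h^n)$, i.e. a positive scalar multiple of $\bn_h\circ\Theta_h^n$.

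The main work is the temporal component, $z_t=\frac{1}{\sqrt{1+V_{\lin}^2}}\big(-(\partial_t\Theta_{h,\spa}^n)^{T}(D\Theta_{h,\spa}^n)^{-T}\bn_{\lin}-V_{\lin}\big)$, which I would identify with $-\frac{\norm{(D\Theta_{h,\spa}^n)^{-T}\bn_{\lin}}}{\sqrt{1+V_{\lin}^2}}\,(V_h\circ\Theta_h^n)$. This reduces to verifying
\[
V_h\circ\Theta_h^n = \frac{(\partial_t\Theta_{h,\spa}^n)^{T}(D\Theta_{h,\spa}^n)^{-T}\bn_{\lin}+V_{\lin}}{\norm{(D\Theta_{h,\spa}^n)^{-T}\bn_{\lin}}},
\]
which I would obtain by composing the definition \eqref{Vh_def_ho} of $V_h$ with $\Theta_h^n$. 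The only delicate point is the factor $\partial_t(\Theta_h^n)^{-1}$: composing turns $\partial_t(\Theta_h^n)^{-1}\circ\Theta_h^n$ into the last column of $(D_{(\bx,t)}\Theta_h^n)^{-1}$, namely $(-(D\Theta_{h,\spa}^n)^{-1}\partial_t\Theta_{h,\spa}^n,\,1)^T$. Pairing this with $\nabla_{(\bx,t)}\hat{\phi}_h=(\norm{\nabla\hat{\phi}_h}\bn_{\lin},\,-V_{\lin}\norm{\nabla\hat{\phi}_h})^T$, where I use $\nabla\hat{\phi}_h=\norm{\nabla\hat{\phi}_h}\bn_{\lin}$ and $\partial_t\hat{\phi}_h=-V_{\lin}\norm{\nabla\hat{\phi}_h}$ from \eqref{Vh_def}, and dividing by the denominator $\norm{(D\Theta_{h,\spa}^n)^{-T}\nabla\hat{\phi}_h}=\norm{\nabla\hat{\phi}_h}\,\norm{(D\Theta_{h,\spa}^n)^{-T}\bn_{\lin}}$, the common factor $\norm{\nabla\hat{\phi}_h}$ cancels and yields exactly the displayed identity. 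I expect this bookkeeping — correctly handling the composition with $\Theta_h^n$/$(\Theta_h^n)^{-1}$ and recognizing $\partial_t(\Theta_h^n)^{-1}\circ\Theta_h^n$ as the last column of the inverse Jacobian — to be the genuine obstacle; the rest is linear algebra with the block structure.

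Combining the two parts gives $\bz = \frac{\norm{(D\Theta_{h,\spa}^n)^{-T}\bn_{\lin}}}{\sqrt{1+V_{\lin}^2}}\,(\bn_h,\,-V_h)^{T}\circ\Theta_h^n$. Finally I would normalize: since $\norm{\bn_h}=1$, the bracketed vector has norm $\sqrt{1+V_h^2}\circ\Theta_h^n$ (equivalently, $\norm{\bz}$ is read off directly from \eqref{vh_to_vlin}), so $\bz/\norm{\bz}=\big(\tfrac{1}{\sqrt{1+V_h^2}}(\bn_h,\,-V_h)^{T}\big)\circ\Theta_h^n$. Because $\Theta_h^n$ is a bijection onto $S_h^n$, precomposing both sides of $\bn_{S_h}\circ\Theta_h^n=\bz/\norm{\bz}$ with $(\Theta_h^n)^{-1}$ removes the composition and establishes \eqref{nsh_repres} almost everywhere on $S_h^n$.
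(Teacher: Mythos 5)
Your proposal is correct and follows essentially the same route as the paper's proof: both compute $\n_{S_h}\circ\Theta_h^n$ by applying the block-triangular structure of $(D_{(\bx,t)}\Theta_h^n)^{-T}$ (a consequence of the time-identity last component in \eqref{Theta_Def}) to $\n_{S_{\lin}}$ in the form \eqref{nsh_repres_lin}, and both identify the temporal entry with $-V_h$ by recognizing $\partial_t(\Theta_h^n)^{-1}\circ\Theta_h^n$ inside the composed definition \eqref{Vh_def_ho}. The only cosmetic difference is that the paper obtains the normalization factor by citing \eqref{vh_to_vlin} from Lemma~\ref{lemma_technical_vh}, whereas you read it off directly from $\norm{(\n_h^T,-V_h)^T}=\sqrt{1+V_h^2}$, which makes your argument marginally more self-contained but otherwise identical.
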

\begin{proof}
Recall the formulas \eqref{nh_defs} and \eqref{Vh_def_ho}
\[ \bn_h \circ \Theta_h^n= \frac{(D\Theta_{h,\spa}^n)^{-T}\bn_{\lin}}{\norm{(D\Theta_{h,\spa}^n)^{-T}\bn_{\lin}}}, \quad 
V_h\circ \Theta_h^n=\frac{-\left(\ddt{(\Theta_{h}^n)^{-1}}{t} \circ\Theta_h^n \right)^{T}\nabla_{(\x,t)}\hat{\phi}_h}{\norm{(D\Theta_{h,\spa}^n)^{-T}\nabla\hat{\phi}_h}}.
\]
 Using \eqref{nslin_def_anddefintion_nsh_higherorder} and  the structure of $ \Theta_h^n$, cf. \eqref{Theta_Def} we obtain
	\begin{equation} \label{TT}
		\n_{S_{h}} 
    \circ \Theta_{h}^n= \frac{1}{\norm{(D_{(\x,t)}\Theta_h^n)^{-T}\n_{S_{\lin}}}}\begin{pmatrix} (D\Theta_{h,\spa}^n)^{-T} \left(\frac{\n_{\lin}}{\sqrt{1+V_{\lin}^2}}\right)\\ \left(\ddt{(\Theta_h^n)^{-1}}{t}\circ \Theta_h^n\right)^{T}\n_{S_{\lin}}\end{pmatrix}.
	\end{equation}
	From \eqref{vh_to_vlin} we get
	\[
   \norm{(D_{(\x,t)}\Theta_h^n)^{-T}\n_{S_{\lin}}}^{-1}= \frac{\sqrt{1+V_{\lin}^2}}{\sqrt{ 1+ V_h^2\circ\Theta_h^n}} \norm{(D\Theta_{h,\spa}^n)^{-T}\n_{\lin}}^{-1}.
	\]
Hence, for the first three components of the vector $\n_{S_{h}}\circ \Theta_h^n \in\R^4$ we get
\[
  \frac{(D\Theta_{h,\spa}^n)^{-T} \left(\frac{\n_{\lin}}{\sqrt{1+V_{\lin}^2}}\right)}{\norm{(D_{(\x,t)}\Theta_h^n)^{-T}\n_{S_{\lin}}}} = \frac{1}{\sqrt{ 1+ V_h^2\circ\Theta_h^n}}\bn_h \circ \Theta_h^n,
\]
and for the last entry we get, using \eqref{nslin_def_anddefintion_nsh_higherorder}, \eqref{nabla_st_to_s} and $\norm{\nabla \hat \phi_h}\n_{\lin}= \nabla \hat \phi_h $, 
\[
 \frac{\left(\ddt{(\Theta_h^n)^{-1}}{t}\circ \Theta_h^n\right)^{T}\n_{S_{\lin}}}{\norm{(D_{(\x,t)}\Theta_h^n)^{-T}\n_{S_{\lin}}}}= \frac{\left(\ddt{(\Theta_h^n)^{-1}}{t}\circ \Theta_h^n\right)^{T} \nabla_{(\bx,t)}\hat \phi_h}{\sqrt{1+ V_h^2 \circ \Theta_h^n} \norm{\nabla \hat \phi_h} \norm{(D\Theta_{h,\spa}^n)^{-T}\n_{\lin} }}
 = \frac{-V_h \circ \Theta_h^n}{\sqrt{1+ V_h^2 \circ \Theta_h^n}},
\]
which completes the proof.
\end{proof}

As an easy consequence we obtain a discrete analogue of the integral transformation  formula.
\begin{corollary}\label[corollary]{fancyformelcoro}
	For $g_h\in L^2(S_h^n)$,  $n=1,\dots,N$, we have
	\begin{equation} \label{PPPP}
	\int_{t_{n-1}}^{t_n}\int_{\Gamma^n_h(t)}g_h\dif s_h\dif t=\int_{S_h^n}\frac{g_h}{\sqrt{1+V_h^2}}\dif \sigma_h.
\end{equation}
\end{corollary}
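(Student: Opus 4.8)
The plan is to obtain the corollary as a one-line simplification of the two preceding lemmas, with no genuinely new work required. \Cref{theo_trafo_discrete} already delivers the integral transformation formula in the form
\[
\int_{t_{n-1}}^{t_n}\int_{\Gamma^n_h(t)}g_h\dif s_h\dif t=\int_{S_h^n} g_h\,\frac{\n_0\cdot \n_{S_h}}{\n\cdot \n_h}\dif \sigma_h,
\]
so the entire content of the corollary reduces to showing that the geometric weight $\frac{\n_0\cdot \n_{S_h}}{\n\cdot \n_h}$, which still refers to the exact spatial normal $\n$, collapses to the purely discrete factor $\frac{1}{\sqrt{1+V_h^2}}$ almost everywhere on $S_h^n$.

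To carry this out I would insert the representation $\n_{S_h}=\frac{1}{\sqrt{1+V_h^2}}(\n_h^T,-V_h)^T$ from \Cref{lemma_nsh_repres}. The key observation is that $\n_0$ has vanishing temporal component, $\n_0^T=(\n^T,0)$, so the Euclidean inner product in $\R^4$ picks out only the first three (spatial) entries of $\n_{S_h}$ and annihilates the $-V_h$ contribution entirely. This yields $\n_0\cdot \n_{S_h}=\frac{1}{\sqrt{1+V_h^2}}\,\n\cdot \n_h$, and dividing by $\n\cdot \n_h$ cancels the common factor and leaves exactly $\frac{1}{\sqrt{1+V_h^2}}$. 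Substituting this back into the identity of \Cref{theo_trafo_discrete} gives the asserted formula.

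Since both nontrivial ingredients are already in place, there is essentially no obstacle; the only point deserving a word of care is the well-definedness of the quotient, \ie that $\n\cdot \n_h$ stays bounded away from zero so that the pointwise cancellation is legitimate almost everywhere. This is guaranteed by the approximation setting under which $S_h\approx S$ (for $h$ and $\Delta t$ sufficiently small both $\n$ and $\n_h$ are close to the exact normal), and in any case the quotient $\frac{\n_0\cdot\n_{S_h}}{\n\cdot\n_h}$ is precisely the weight that already appears in \Cref{theo_trafo_discrete}, so its admissibility is inherited directly from that lemma.
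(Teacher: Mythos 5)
Your proposal is correct and follows exactly the paper's own proof: both combine \Cref{theo_trafo_discrete} with the representation of $\n_{S_h}$ from \Cref{lemma_nsh_repres} via the identity $\n_0\cdot \n_{S_h}=(1+V_h^2)^{-\frac12}\,\n\cdot \n_h$, which holds because the temporal component of $\n_0$ vanishes. Your added remark on the nondegeneracy of $\n\cdot\n_h$ is a fine (if implicit in the paper) point of care, but otherwise the two arguments are identical.
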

\begin{proof}
The result follows from Lemma~\ref{theo_trafo_discrete}, \Cref{lemma_nsh_repres} and $\bn_0 \cdot \bn_{S_h} = (1 + V_h^2)^{- \frac12} \bn\cdot \bn_h$.
\end{proof}
\begin{remark}
 \rm We briefly comment on the structure of the proof of \eqref{PPPP}. A proof of \cref{transformationsformelMAIN} can be given using  local parametrizations of $\Gamma(t)$ and of $S$, which can be related using the normal velocity $V=\bw \cdot \bn$ that transports $\Gamma(t)$. This approach does not work for the discrete version  \eqref{PPPP}, because we  do not have a  velocity field available that transports $\Gamma_h^n(t)$. Instead, in our proof we relate integrals $\int_{\Gamma_h^n(t)} \sim \int_{\Gamma(t)}$ and $\int_{S_h^n} \sim \int_{S^n}$, and use the result \cref{transformationsformelMAIN}.
\end{remark}
\ \\[1ex]
Using \Cref{fancyformelcoro} we are able to transform product integrals on $\Gamma_h^n(t)\times I_n$ to surface integrals on $S_h^n$. For the latter  we have a natural  partial integration formula, which   is  presented below in Theorem~\ref{PI_Theorem}. A further useful relation between the surface measures on $S$ and $S_h$, which follows from the results above, is given by 
\begin{equation} \label{helptrans}
 \frac{1}{\alpha} \dif \sigma = \frac{\mu_h}{\sqrt{1 +V_h^2}} \dif \sigma_h.
\end{equation}

\subsection{Partial integration formula on the space-time surface approximation}
Below we use standard surface differential operators on the  smooth  surfaces $\Gamma= \Gamma(t)$ (for fixed $t$) and $S$, denoted by $\nabla_\gamma$ (surface gradient) and $\Div_{\gamma}$, $\gamma \in \{ \Gamma, S\}$. Corresponding to the velocity field $\bw$ we define the space-time 
vector $\bw_S^T\coloneqq(\bw^T,1) \in \R^4$. Elementary calculations, cf. \cite[Lemma 3.2]{Diss_Sass}, yield the relation
\begin{equation} \label{elneu}
  {\Div_S \left( \frac{1}{\alpha} \bw_S \right)=\frac{1}{\alpha} \Div_{\Gamma} \bw} \quad \text{on}~S.
\end{equation}
For a scalar function $v \in H^1(S)$ the material derivative is given by
\begin{equation}
  \dot v:= \bw_S \cdot \nabla_S v = \frac{\partial v^e}{\partial t} + \bw \cdot \nabla v^e\label{def_weakdev_h1}.
\end{equation}
In our setting we are interested in partial integration for product integrals of the form
$\int_0^T \int_{\Gamma(t)} g \dif s\dif t = \int_S \frac{1}{\alpha} g \dif \sigma$, with $\alpha=\sqrt{1+(\bw \cdot \bn)^2}$. The following partial integration formula holds
\begin{equation} \label{PIcont}
\begin{split}
 \int_{S}\frac{1}{\alpha}\dot{u}v\dif \sigma & =-\int_{S}\frac{1}{\alpha}u\dot{v}\dif \sigma +\int_S u v  \Div_S \left( \frac{1}{\alpha} \bw_S \right) \dif \sigma  \\
 & \quad + \int_{\Gamma(T)} uv \dif s - \int_{\Gamma(0)} uv \dif s.
\end{split} \end{equation}
This result can be derived using the Leibniz formula and the relation \eqref{elneu}. In the setting of a finite element discretization on the space-time surface approximation $S_h$ we are interested in a discrete analogue of \eqref{PIcont}. 

Note that the space-time surface $S_h^n$ is a Lipschitz surface formed by a triangulation of 
smooth curved space-time surfaces, cf. \Cref{theta_picture} for  the case of an evolving curve embedded in $\R^2$. Let the space-time surface triangulation $\mathcal{T}_{S_h^n}$ be the set of smooth three-dimensional manifolds that $S_{h}^n$ consists of, i.e. 
\[ {\T_{S_h^n}\coloneqq \{\Theta_h^n(P)\cap S_h^n: P\in Q_{h,n}^S\}}.
\]
Then, we have ${S_h^n=\bigcup_{K_S\in \mathcal{T}_{S_h^n}} K_S}$. Let $\T_{S_h}\coloneqq \bigcup_{n=1}^N\T_{S_h^n}$. Note that $K_S\in \T_{S_h}$ is a three-dimensional curved polytope that can be partitioned into curved tetrahedra. In general this deformed triangulation $\T_{S_h}$ is \emph{not} shape regular; due to the evolving surface through the fixed bulk triangulation  $Q_h$, element angles can be very small and the size of neighboring elements may vary strongly.  An illustration of the two-dimensional analogue of $\T_{S_h^n}$ is given in \Cref{theta_picture}.
 For $n= 1,\dots,N,$ let $\mathcal{F}_I^n\coloneqq \{ \partial K_S^1\cap\partial K_S^2:K_S^1,K_S^2\in \mathcal{T}_{S_h^n},K_S^1\neq K_S^2\}$ be the set of interior curved boundary faces of the elements in $\mathcal{T}_{S_h^n}$ and let $\mathcal{F}_I\coloneqq \bigcup_{n=1}^N\mathcal{F}_I^n$.
Almost everywhere on $\partial S_h^n$ the vector  
\begin{equation}
	\bnu_{\partial}\coloneqq \frac{1}{\sqrt{1+V_h^2}} \begin{pmatrix} V_h \bn_h \\ 1 \end{pmatrix} \label{ndelta_defs}
\end{equation}
 is orthogonal to $\partial S_h^n$ and satisfies $\bnu_{\partial}\cdot  \n_{S_h}=0$, cf.~\eqref{nsh_repres}. Hence, $\bnu_{\partial}$ is the unit conormal of the top boundary of $S_h^n$ and $-\bnu_{\partial }$ is the unit outer conormal of the bottom boundary of $S_h^n$. For $K_S\in \T_{S_h}$ the interior conormals are denoted by ${\bnu_h}\restrict{K_S}$, 
 cf. \Cref{conormal} for an illustration.
 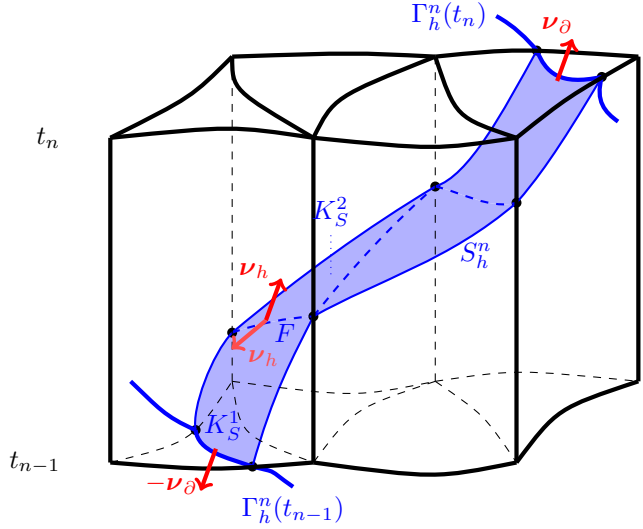
\begin{figure}[!htbp]
	\centering
	\begin{tikzpicture}[scale = 0.54]
		\newcommand{\z}{0.05}
		\newcommand{\Hoehe}{8.0}
		\newcommand{\Tiefe}{2.0}
		\newcommand{\Shift}{3.0}
		
		\newcommand{\lambdaA}{0.5}
		\newcommand{\PunktA}{(\lambdaA*0+\Shift-\lambdaA*\Shift+0.6,-0.2+\lambdaA*0+\Tiefe-\lambdaA*\Tiefe)}
		\newcommand{\lambdaB}{0.7}
		\newcommand{\PunktB}{(\lambdaB*5,0-0.1)}
		\newcommand{\lambdaC}{0.15}
		\newcommand{\PunktC}{(\Shift,\Tiefe+\lambdaC*\Hoehe)}
		\newcommand{\lambdaD}{0.45}
		\newcommand{\PunktD}{(5,\lambdaD*\Hoehe)}
		\newcommand{\lambdaE}{0.6}
		\newcommand{\PunktE}{(5+\Shift,\Tiefe+\lambdaE*\Hoehe)}
		\newcommand{\lambdaF}{0.8}
		\newcommand{\PunktF}{(10,\lambdaF*\Hoehe)}
		\newcommand{\lambdaG}{0.5}
		\newcommand{\PunktG}{(\lambdaG*5+\lambdaG*\Shift+10+\Shift-10*\lambdaG-\lambdaG*\Shift,\Tiefe+\Hoehe-0.35+0.5)}
		\newcommand{\lambdaH}{0.5}
		\newcommand{\PunktH}{(\lambdaH*10+10+\Shift-\lambdaH*10-\lambdaH*\Shift+0.6,\Tiefe-\lambdaH*\Tiefe+\Hoehe+0.5)}
		\newcommand{\AtildeShifth}{-0.5}\newcommand{\AtildeShiftv}{1.0}
		\newcommand{\PunktAtilde}{(\lambdaA*0+\Shift-\lambdaA*\Shift+\AtildeShifth-0.5,\lambdaA*0+\Tiefe-\lambdaA*\Tiefe+\AtildeShiftv)}
		\newcommand{\BtildeShifth}{1.0}\newcommand{\BtildeShiftv}{-0.4}
		\newcommand{\PunktBtilde}{(\lambdaB*5+\BtildeShifth,-0.2+\BtildeShiftv)}
		\newcommand{\GtildeShifth}{-1.0}\newcommand{\GtildeShiftv}{+1.0}
		\newcommand{\PunktGtilde}{(\lambdaG*5+\lambdaG*\Shift+10+\Shift-10*\lambdaG-\lambdaG*\Shift+\GtildeShifth,\Tiefe+\Hoehe+\GtildeShiftv)}
		\newcommand{\HtildeShifth}{1.0}\newcommand{\HtildeShiftv}{-0.6}
		\newcommand{\PunktHtilde}{(\lambdaH*10+10+\Shift-\lambdaH*10-\lambdaH*\Shift+\HtildeShifth,\Tiefe-\lambdaH*\Tiefe+\Hoehe+\HtildeShiftv)}
		
		\newcommand{\ABh}{0.0}\newcommand{\ABv}{-0.2}
		\newcommand{\BAh}{0.0}\newcommand{\BAv}{-0.4}
		\newcommand{\BDh}{0.0}\newcommand{\BDv}{0.2}
		\newcommand{\DBh}{0.0}\newcommand{\DBv}{0.5}
		\newcommand{\ACh}{0.0}\newcommand{\ACv}{0.2}
		\newcommand{\CAh}{0.0}\newcommand{\CAv}{-0.4}
		\newcommand{\CEh}{0.0}\newcommand{\CEv}{0.2}
		\newcommand{\ECh}{0.0}\newcommand{\ECv}{0.2}
		\newcommand{\DFh}{0.0}\newcommand{\DFv}{-0.5}
		\newcommand{\FDh}{0.0}\newcommand{\FDv}{0.0}
		\newcommand{\CDh}{0.0}\newcommand{\CDv}{0.1}
		\newcommand{\DCh}{0.0}\newcommand{\DCv}{0.1}
		\newcommand{\DEh}{0.0}\newcommand{\DEv}{0.2}
		\newcommand{\EDh}{0.0}\newcommand{\EDv}{0.2}
		\newcommand{\EFh}{0.0}\newcommand{\EFv}{-0.3}
		\newcommand{\FEh}{0.0}\newcommand{\FEv}{0.0}
		\newcommand{\EGh}{0.0}\newcommand{\EGv}{-0.3}
		\newcommand{\GEh}{0.0}\newcommand{\GEv}{-0.8}
		\newcommand{\FHh}{0.0}\newcommand{\FHv}{-0.5}
		\newcommand{\HFh}{0.0}\newcommand{\HFv}{-0.3}
		\newcommand{\GHh}{-0.1}\newcommand{\GHv}{-0.05}
		\newcommand{\HGh}{-0.1}\newcommand{\HGv}{-0.05}
		\newcommand{\AAtildeh}{0.0}\newcommand{\AAtildev}{-0.3}
		\newcommand{\AtildeAh}{0.0}\newcommand{\AtildeAv}{-0.2}
		\newcommand{\BBtildeh}{0.0}\newcommand{\BBtildev}{0.0}
		\newcommand{\BtildeBh}{0.0}\newcommand{\BtildeBv}{0.0}
		\newcommand{\HHtildeh}{0.01}\newcommand{\HHtildev}{0.03}
		\newcommand{\HtildeHh}{0.1}\newcommand{\HtildeHv}{0.1}
		\newcommand{\GGtildeh}{0.1}\newcommand{\GGtildev}{0.1}
		\newcommand{\GtildeGh}{0.1}\newcommand{\GtildeGv}{0.1}
		
		\newcommand{\PunktAB}{(0.333*\lambdaA*0+0.333*\Shift-0.333*\lambdaA*\Shift+0.666*\lambdaB*5+\ABh,0.333*\lambdaA*0+0.333*\Tiefe-0.333*\lambdaA*\Tiefe+0.666*0+\ABv)}
		\newcommand{\PunktBA}{(0.666*\lambdaA*0+0.666*\Shift-0.666*\lambdaA*\Shift+0.333*\lambdaB*5+\BAh,0.666*\lambdaA*0+0.666*\Tiefe-0.666*\lambdaA*\Tiefe+0.333*0+\BAv)}
		\newcommand{\PunktBD}{(0.333*\lambdaB*5+0.666*5+\BDh,0.333*0+0.666*\lambdaD*\Hoehe+\BDv)}
		\newcommand{\PunktDB}{(0.666*\lambdaB*5+0.333*5+\DBh,0.666*0+0.333*\lambdaD*\Hoehe+\DBv)}
		\newcommand{\PunktAC}{(0.333*\lambdaA*0+0.333*\Shift-0.333*\lambdaA*\Shift+0.666*\Shift+\ACh,0.333*\lambdaA*0+0.333*\Tiefe-0.333*\lambdaA*\Tiefe+0.666*\Tiefe+0.666*\lambdaC*\Hoehe+\ACv)}
		\newcommand{\PunktCA}{(0.666*\lambdaA*0+0.666*\Shift-0.666*\lambdaA*\Shift+0.333*\Shift+\CAh,0.666*\lambdaA*0+0.666*\Tiefe-0.666*\lambdaA*\Tiefe+0.333*\Tiefe+0.333*\lambdaC*\Hoehe+\CAv)}
		\newcommand{\PunktCE}{(0.333*\Shift+0.666*5+0.666*\Shift+\CEh,0.333*\Tiefe+0.333*\lambdaC*\Hoehe+0.666*\Tiefe+0.666*\lambdaE*\Hoehe+\CEv)}
		\newcommand{\PunktEC}{(0.666*\Shift+0.333*5+0.333*\Shift+\ECh,0.666*\Tiefe+0.666*\lambdaC*\Hoehe+0.333*\Tiefe+0.333*\lambdaE*\Hoehe+\ECv)}
		\newcommand{\PunktDF}{(0.333*5+0.666*10+\DFh,0.333*\lambdaD*\Hoehe+0.666*\lambdaF*\Hoehe+\DFv)}
		\newcommand{\PunktFD}{(0.666*5+0.333*10+\FDh,0.666*\lambdaD*\Hoehe+0.333*\lambdaF*\Hoehe+\FDv)}
		\newcommand{\PunktCD}{(0.333*\Shift+0.666*5+\CDh,0.333*\Tiefe+0.333*\lambdaC*\Hoehe+0.666*\lambdaD*\Hoehe+\CDv)}
		\newcommand{\PunktDC}{(0.666*\Shift+0.333*5+\DCh,0.666*\Tiefe+0.666*\lambdaC*\Hoehe+0.333*\lambdaD*\Hoehe+\DCv)}
		\newcommand{\PunktDE}{(0.666*5+0.666*\Shift+0.333*5+\DEh,0.666*\Tiefe+0.666*\lambdaE*\Hoehe+0.333*\lambdaD*\Hoehe+\DEv)}
		\newcommand{\PunktED}{(0.333*5+0.333*\Shift+0.666*5+\EDh,0.333*\Tiefe+0.333*\lambdaE*\Hoehe+0.666*\lambdaD*\Hoehe+\EDv)}
		\newcommand{\PunktEF}{(0.333*5+0.333*\Shift+0.666*10+\EFh,0.333*\Tiefe+0.333*\lambdaE*\Hoehe+0.666*\lambdaF*\Hoehe+\EFv)}
		\newcommand{\PunktFE}{(0.666*5+0.666*\Shift+0.333*10+\FEh,0.666*\Tiefe+0.666*\lambdaE*\Hoehe+0.333*\lambdaF*\Hoehe+\FEv)}
		\newcommand{\PunktEG}{(0.333*5+0.333*\Shift+0.666*\lambdaG*5+0.666*\lambdaG*\Shift+0.666*10+0.666*\Shift-0.666*10*\lambdaG-0.666*\lambdaG*\Shift+\EGh,0.333*\Tiefe+0.333*\lambdaE*\Hoehe+0.666*\Tiefe+0.666*\Hoehe+\EGv)}
		\newcommand{\PunktGE}{(0.666*5+0.666*\Shift+0.333*\lambdaG*5+0.333*\lambdaG*\Shift+0.333*10+0.333*\Shift-0.333*10*\lambdaG-0.333*\lambdaG*\Shift+\GEh,0.666*\Tiefe+0.666*\lambdaE*\Hoehe+0.333*\Tiefe+0.333*\Hoehe+\GEv)}
		\newcommand{\PunktFH}{(0.333*10+0.666*\lambdaH*10+0.666*10+0.666*\Shift-0.666*\lambdaH*10-0.666*\lambdaH*\Shift+\FHh,0.333*\lambdaF*\Hoehe+0.666*\Tiefe-0.666*\lambdaH*\Tiefe+0.666*\Hoehe+\FHv)}
		\newcommand{\PunktHF}{(0.666*10+0.333*\lambdaH*10+0.333*10+0.333*\Shift-0.333*\lambdaH*10-0.333*\lambdaH*\Shift+\HFh,0.666*\lambdaF*\Hoehe+0.333*\Tiefe-0.333*\lambdaH*\Tiefe+0.333*\Hoehe+\HFv)}
		\newcommand{\PunktGH}{(0.333*\lambdaG*5+0.333*\lambdaG*\Shift+0.333*10+0.333*\Shift-0.333*10*\lambdaG-0.333*\lambdaG*\Shift+0.666*\lambdaH*10+0.666*10+0.666*\Shift-0.666*\lambdaH*10-0.666*\lambdaH*\Shift+\GHh,0.333*\Tiefe+0.333*\Hoehe+0.666*\Tiefe-0.666*\lambdaH*\Tiefe+0.666*\Hoehe+\GHv)}
		\newcommand{\PunktHG}{(0.666*\lambdaG*5+0.666*\lambdaG*\Shift+0.666*10+0.666*\Shift-0.666*10*\lambdaG-0.666*\lambdaG*\Shift+0.333*\lambdaH*10+0.333*10+0.333*\Shift-0.333*\lambdaH*10-0.333*\lambdaH*\Shift+\HGh,0.666*\Tiefe+0.666*\Hoehe+0.333*\Tiefe-0.333*\lambdaH*\Tiefe+0.333*\Hoehe+\HGv)}
		\newcommand{\PunktAAtilde}{(\lambdaA*0+\Shift-\lambdaA*\Shift+0.666*\AtildeShifth+\AAtildeh,\lambdaA*0+\Tiefe-\lambdaA*\Tiefe+0.666*\AtildeShiftv+\AAtildev)}
		\newcommand{\PunktAtildeA}{(\lambdaA*0+\Shift-\lambdaA*\Shift+0.333*\AtildeShifth+\AtildeAh,\lambdaA*0+\Tiefe-\lambdaA*\Tiefe+0.333*\AtildeShiftv+\AtildeAv)}
		\newcommand{\PunktBBtilde}{(\lambdaB*5+0.333*\BtildeShifth+\BBtildeh,0.333*\BtildeShiftv+\BBtildev)}
		\newcommand{\PunktBtildeB}{(\lambdaB*5+0.666*\BtildeShifth+\BtildeBh,0.666*\BtildeShiftv+\BtildeBv)}
		\newcommand{\PunktHHtilde}{(\lambdaH*10+10+\Shift-\lambdaH*10-\lambdaH*\Shift+0.666*\HtildeShifth+\HHtildeh,\Tiefe-\lambdaH*\Tiefe+\Hoehe+0.666*\HtildeShiftv+\HHtildev)}
		\newcommand{\PunktHtildeH}{(\lambdaH*10+10+\Shift-\lambdaH*10-\lambdaH*\Shift+0.333*\HtildeShifth+\HtildeHh,\Tiefe-\lambdaH*\Tiefe+\Hoehe+0.333*\HtildeShiftv+\HtildeHv)}
		\newcommand{\PunktGGtilde}{(\lambdaG*5+\lambdaG*\Shift+10+\Shift-10*\lambdaG-\lambdaG*\Shift+0.666*\GtildeShifth+\GGtildeh,\Tiefe+\Hoehe+0.666*\GtildeShiftv+\GGtildev)}
		\newcommand{\PunktGtildeG}{(\lambdaG*5+\lambdaG*\Shift+10+\Shift-10*\lambdaG-\lambdaG*\Shift+0.333*\GtildeShifth+\GtildeGh,\Tiefe+\Hoehe+0.333*\GtildeShiftv+\GtildeGv)}
		
		\node[left] at (-1.0,0.0) {$t_{n-1}$};
		\node[left] at (-1.0,\Hoehe) {$t_n$};
		
		\newcommand{\FdM}{blue}
		
		
		\node[below, color=\FdM] at \PunktBtilde {$\Gamma_h^n(t_{n-1})$};
		\node[left, color=\FdM] at \PunktGtilde {$\Gamma_h^n(t_n)$};
		
		\newcommand{\FF}{30}
		\fill[\FdM!\FF] \PunktA .. controls \PunktBA and \PunktAB .. \PunktB -- \PunktB .. controls \PunktDB and \PunktBD .. \PunktD -- %
		\PunktD .. controls \PunktCD and \PunktDC .. \PunktC -- \PunktC .. controls \PunktAC and \PunktCA .. \PunktA;
		\fill[\FdM!\FF] \PunktC .. controls \PunktDC and \PunktCD .. \PunktD -- \PunktD .. controls \PunktFD and \PunktDF .. \PunktF -- %
		\PunktF .. controls \PunktEF and \PunktFE .. \PunktE -- \PunktE .. controls \PunktCE and \PunktEC .. \PunktC;
		\fill[\FdM!\FF] \PunktE .. controls \PunktFE and \PunktEF .. \PunktF -- \PunktF .. controls \PunktHF and \PunktFH .. \PunktH -- %
		\PunktH .. controls \PunktGH and \PunktHG .. \PunktG -- \PunktG .. controls \PunktEG and \PunktGE .. \PunktE;

		\newcommand{\Radius}{2pt}
		\filldraw[fill=black, ultra thick] \PunktA circle (\Radius);
		\filldraw[fill=black, ultra thick] \PunktB circle (\Radius);
		\filldraw[fill=black, ultra thick] \PunktC circle (\Radius);
		\filldraw[fill=black, ultra thick] \PunktD circle (\Radius);
		\filldraw[fill=black, ultra thick] \PunktE circle (\Radius);
		\filldraw[fill=black, ultra thick] \PunktF circle (\Radius);
		\filldraw[fill=black, ultra thick] \PunktG circle (\Radius);
		\filldraw[fill=black, ultra thick] \PunktH circle (\Radius);
		
		\draw[ultra thick, color=\FdM] \PunktA .. controls \PunktBA and \PunktAB .. \PunktB;
		\draw[thick, color=\FdM]       \PunktA .. controls \PunktCA and \PunktAC .. \PunktC;
		\draw[thick, color=\FdM]       \PunktC .. controls \PunktEC and \PunktCE .. \PunktE;
		\draw[thick, color=\FdM]       \PunktE .. controls \PunktGE and \PunktEG .. \PunktG;
		\draw[thick, color=\FdM]       \PunktB .. controls \PunktDB and \PunktBD .. \PunktD;
		\draw[thick, color=\FdM]       \PunktD .. controls \PunktFD and \PunktDF .. \PunktF;
		\draw[thick, color=\FdM]       \PunktF .. controls \PunktHF and \PunktFH .. \PunktH;
		\draw[ultra thick, color=\FdM] \PunktH .. controls \PunktGH and \PunktHG .. \PunktG;
		\draw[ultra thick, color=\FdM] \PunktA .. controls \PunktAtildeA and \PunktAAtilde .. \PunktAtilde;
		\draw[ultra thick, color=\FdM] \PunktB .. controls \PunktBtildeB and \PunktBBtilde .. \PunktBtilde;
		\draw[ultra thick, color=\FdM] \PunktG .. controls \PunktGtildeG and \PunktGGtilde .. \PunktGtilde;
		\draw[ultra thick, color=\FdM] \PunktH .. controls \PunktHtildeH and \PunktHHtilde .. \PunktHtilde;
		
		\draw[thick, dashed, color=\FdM]       \PunktC .. controls \PunktDC and \PunktCD .. \PunktD;
		\draw[thick, dashed, color=\FdM]       \PunktD .. controls \PunktED and \PunktDE .. \PunktE;
		\draw[thick, dashed, color=\FdM]       \PunktE .. controls \PunktFE and \PunktEF .. \PunktF;
		
		\draw[ultra thick] (0.0,0.0) .. controls(2.5, -0.2) .. (5.0,0.0);
		\draw[ultra thick] (5.0,0.0) .. controls (8, -0.4) .. (10.0,0.0);
		\draw[dashed] (\Shift,\Tiefe) .. controls (3+\Shift, \Tiefe-0.4) .. (5.0+\Shift,\Tiefe);
		\draw[dashed] (5+\Shift,\Tiefe) .. controls (7+\Shift, \Tiefe+0.3) .. (10.0+\Shift,\Tiefe);
		\draw[dashed] (0.0,0.0) .. controls(\Shift/1.5, \Tiefe/4) .. (\Shift,\Tiefe);
		\draw[dashed] (\Shift,\Tiefe) .. controls(\Shift+0.2, \Tiefe/3) .. (5.0,0.0);
		\draw[dashed] (5.0,0.0) .. controls(6, \Tiefe/1.5) .. (5.0+\Shift,\Tiefe);
		\draw[dashed] (5.0+\Shift,\Tiefe) .. controls (7.7+\Shift/2, \Tiefe/1.7) .. (10.0,0.0);
		\draw[ultra thick] (10.0,0.0) .. controls (10 + \Shift/3, \Tiefe/1.5) .. (10.0+\Shift,\Tiefe);
		
		\draw[ultra thick] (0.0,0.0+\Hoehe) .. controls (3, 0.3+\Hoehe) .. (5.0,0.0+\Hoehe);
		\draw[ultra thick] (5.0,0.0+\Hoehe) .. controls (8, -0.3+\Hoehe) .. (10.0,0.0+\Hoehe);
		\draw[ultra thick] (\Shift,\Tiefe+\Hoehe) .. controls (3+\Shift, \Tiefe+\Hoehe+0.1) .. (5.0+\Shift,\Tiefe+\Hoehe);
		\draw[ultra thick] (\Shift +5 ,\Tiefe+\Hoehe) .. controls (7+\Shift, \Tiefe+\Hoehe+0.2) .. (10.0+\Shift,\Tiefe+\Hoehe);
		\draw[ultra thick] (0.0,0.0+\Hoehe) .. controls (\Shift/1.5, \Tiefe/2+\Hoehe) .. (\Shift,\Tiefe+\Hoehe);
		\draw[ultra thick] (\Shift,\Tiefe+\Hoehe) .. controls (\Shift/2+1.5, \Hoehe+\Tiefe/1.7) .. (5.0,0.0+\Hoehe);
		\draw[ultra thick] (5.0,0.0+\Hoehe) .. controls (4+ \Shift/2, \Tiefe/2+\Hoehe) .. (5.0+\Shift,\Tiefe+\Hoehe);
		\draw[ultra thick] (5.0+\Shift,\Tiefe+\Hoehe) .. controls(7.5+\Shift/3, \Tiefe/4 + \Hoehe) .. (10.0,0.0+\Hoehe);
		\draw[ultra thick] (10.0,0.0+\Hoehe) .. controls(10+\Shift/2.4, \Tiefe/2+\Hoehe) .. (10.0+\Shift,\Tiefe+\Hoehe);
		
		\draw[ultra thick] (0.0,0.0) -- (0.0,0.0+\Hoehe);
		\draw[ultra thick] (5.0,0.0) -- (5.0,0.0+\Hoehe);
		\draw[ultra thick] (10.0,0.0) -- (10.0,0.0+\Hoehe);
		\draw[dashed]      (\Shift,\Tiefe) -- (\Shift,\Tiefe+\Hoehe);
		\draw[dashed]      (5.0+\Shift,\Tiefe) -- (5.0+\Shift,\Tiefe+\Hoehe);
		\draw[ultra thick] (10.0+\Shift,\Tiefe) -- (10.0+\Shift,\Tiefe+\Hoehe);
		\node[above, color=\FdM] at (3*\Shift,\Hoehe*0.57) {$S_{h}^n$};
		\node[above, color=\FdM] at (0.925*\Shift,\Hoehe*0.04) {$K_S^1$};
		\node[above, color=\FdM] at (1.81*\Shift,\Hoehe*0.7) {$K_S^2$};
		\draw[color=blue, dotted ] (1.81*\Shift,\Hoehe*0.7) -- (1.81*\Shift,\Hoehe*0.575);
		\node[above, color=\FdM] at (4.3,2.75) {$F$};
		\draw[->, color=red, ultra thick] (2.6, 0.34) -- ++(250:1.1);
		\node[above, color=red] at (1.5,-1) {$-\bnu_{\partial}$};
		\draw[->, color=red, ultra thick] (3.83, 3.5) -- ++(70:1.1);
		\node[above, color=red] at (3.55,4.3) {$\bnu_h$};
		\draw[->, color=red!70, ultra thick] (3.83, 3.5) -- ++(220:1.1);
		\node[above, color=red!70] at (3.77,2.15) {$\bnu_h$};
		\draw[->, color=red, ultra thick] (11, 9.4) -- ++(70:1.1);
		\node[above, color=red] at (11,10.35) {$\bnu_{\partial}$};
	\end{tikzpicture}
	\caption{The conormals  $\pm \bnu_{\partial}$ at the time slab boundaries and the interior conormals  $\bnu_{h}\restrict{K_S}$. Due to the non-smoothness of $S_h$, in general at a common face $F$ one has $(\bnu_{h}\restrict{K_S^1})\restrict{F}\neq -(\bnu_{h}\restrict{K_S^2})\restrict{F}$.}
	\label{conormal}
\end{figure}

Let  $F \in \mathcal{F}_I^n$ be an interior curved boundary face  with $F=K_S^1\cap K_S^2$, $K_S^1,K_S^2\in \mathcal{T}_{S_h^n}$. For a function $v$ defined on $K_S^1 \cup K_S^2$  we define the (vector valued) conormal jump on $F$:
\begin{equation} \label{def_conormal_jump}
	{\left[v\right]_{\bnu}}\restrict{F} := 
	\big(v\restrict{K_S^1}{\bnu_h}\restrict{K_{S}^1} + v\restrict{K_S^2}{\bnu_h}\restrict{K_{S}^2}\big)\restrict{F}.
\end{equation}
Note that in general for the Lipschitz surface $S_h^n$ we do not have $C^1$ smoothness across $F$ and thus ${\bnu_h}\restrict{K_{S}^1} \neq - {\bnu_h}\restrict{K_{S}^2}$ on $F$. If the function $v$ is continuous across $F$ we have ${[v]_{\bnu}}\restrict{F} = v\restrict{F} {[1]_{\bnu}}\restrict{F}$.   
For one-sided values at the boundary of a time slab we use the standard notation
\begin{align*}
	u_+^n& \coloneqq u_+(\cdot ,t_n)=\lim_{\eta\searrow 0}u(\cdot, t_n+\eta), \quad n =0,\dots,N-1,\\
	u_-^n& \coloneqq u_-(\cdot ,t_n)=\lim_{\eta\searrow 0}u(\cdot, t_n-\eta), \quad n=1,\dots,N
\end{align*}
and $u_-^0\coloneqq 0$.
Besides a possible discontinuity in the (finite element) functions between time slabs we also can have a  discontinuity in the space-time surface approximation $S_h$ between $S_h^n$ and $S_h^{n+1}$, cf. \Cref{remconsist}. In view of this we introduce, for $u$ defined on $ S_h^n \cup S_h^{n+1}$ the jump
\begin{equation}
	[u]_h^n\coloneqq u_+^n-u_-^n\circ \Theta_h^n \circ (\Theta_h^{n+1})^{-1} \quad \text{on}~\Gamma_h^{n+1}(t_n).\label{discrete_jumps}
\end{equation}
For $v\in H^1(S_h^n)$ we define the discrete material derivative as
\begin{equation}
	\mathring{v}\coloneqq \bw_S\cdot \nabla_{S_h}v=(\mathbf{P}_{S_h}\bw_S)\cdot \nabla_{S_h} v, \textcolor{black}{\qquad\mathbf{P}_{S_h}\coloneqq \bI - \bn_{S_h} \bn_{S_h}^T.}\label{weakmatderivative}
\end{equation}
This derivative is a.e. tangential to the  space-time surface approximation $S_h$. In the next theorem we give a discrete analogue of the partial integration formula \eqref{PIcont}. 
A proof of this result can be found in \cite[Appendix A]{sass2022}.
\begin{theorem}\label{PI_Theorem}
	On $S_h^n$,  $n=1,\dots,N$, we consider a strictly positive piecewise smooth function $\alpha_h\in\bigoplus_{K_S\in \Tau_{S_h^n}}C^1(K_S)$ and $R\coloneqq \alpha_h^{-1}\bw_S\cdot \bnu_{\partial }$. For $u,v\in H^1(S_h^n)$, the following identity holds:
\begin{align}
	\begin{aligned}
		&\int_{S_h^n}\frac{1}{\alpha_h}\mathring{u}v\dif \sigma_h=-\int_{S_h^n}\frac{1}{\alpha_h}u\mathring{v}\dif \sigma_h +\int_{\Gamma_h^n(t_n)}u_{-}^{n}v_{-}^{n}R_-^n\dif s_h\\
		&\qquad-\int_{\Gamma_h^n(t_{n-1})}u_{+}^{n-1}v_{+}^{n-1}R_+^{n-1}\dif s_h  + \sum_{F\in \mathcal{F}_I^n}\int_{F} u v \bw_S\cdot \left[\alpha_h^{-1}\right]_{\bnu} \dif F\\
		&\qquad  -\sum_{K_S\in  \mathcal{T}_{S_h^n}}\int_{K_S} uv\Div_{S_h}\left( \frac{1}{\alpha_h} \bP_{S_h}\bw_S\right)\dif \sigma_h.\label{PI-unschoen}
	\end{aligned}
\end{align}
\end{theorem}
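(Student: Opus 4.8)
The plan is to localize to the smooth pieces $K_S \in \mathcal{T}_{S_h^n}$ of the Lipschitz surface $S_h^n$, apply the tangential divergence (Gauss) theorem on each piece, and then reassemble the contributions. Since the pointwise manipulations below (product rule, divergence theorem) are transparent for smooth functions, I would first establish the identity for $u,v$ that are smooth on each $K_S$ and continuous across $S_h^n$, and afterwards pass to general $u,v \in H^1(S_h^n)$ by a density argument, using that $\mathring{u} = (\bP_{S_h}\bw_S)\cdot\nabla_{S_h}u$ and that the face traces depend continuously on the $H^1$-data.

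The central object is the tangential vector field $\bF := uv\,\alpha_h^{-1}\bP_{S_h}\bw_S$ on each $K_S$. Because $\bP_{S_h} = \bI - \bn_{S_h}\bn_{S_h}^T$ is the orthogonal projection onto the tangent space of $S_h$, the field $\bF$ is genuinely tangential, so the surface divergence theorem on each smooth piece carries no mean-curvature contribution:
\[
 \int_{K_S}\Div_{S_h}\bF\,\dif\sigma_h = \int_{\partial K_S} \bF\cdot\bnu_h\,\dif(\partial K_S).
\]
Expanding the integrand with the product rule gives $\Div_{S_h}\bF = \alpha_h^{-1}(\bP_{S_h}\bw_S)\cdot\nabla_{S_h}(uv) + uv\,\Div_{S_h}(\alpha_h^{-1}\bP_{S_h}\bw_S)$, and using $\nabla_{S_h}(uv) = u\nabla_{S_h}v + v\nabla_{S_h}u$ together with the definition of the discrete material derivative \eqref{weakmatderivative} yields $\alpha_h^{-1}(\bP_{S_h}\bw_S)\cdot\nabla_{S_h}(uv) = \alpha_h^{-1}(u\mathring{v} + v\mathring{u})$. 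Summing over $K_S\in\mathcal{T}_{S_h^n}$ therefore produces the symmetric volume term $\int_{S_h^n}\alpha_h^{-1}(u\mathring{v} + v\mathring{u})\,\dif\sigma_h$ plus the last term of \eqref{PI-unschoen}; moving $\int_{S_h^n}\alpha_h^{-1}u\mathring{v}\,\dif\sigma_h$ to the right-hand side gives the asserted asymmetric form.

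It remains to identify the boundary sum $\sum_{K_S}\int_{\partial K_S}\bF\cdot\bnu_h$, which I would split into interior faces and the temporal (top/bottom) faces. Since each conormal $\bnu_h|_{K_S}$ lies in the tangent plane of $K_S$, one has $(\bP_{S_h}\bw_S)\cdot\bnu_h = \bw_S\cdot\bnu_h$, so on an interior face $F = K_S^1\cap K_S^2 \in \mathcal{F}_I^n$ the two contributions combine into $\bw_S\cdot(\alpha_h^{-1}\bnu_h|_{K_S^1} + \alpha_h^{-1}\bnu_h|_{K_S^2}) = \bw_S\cdot[\alpha_h^{-1}]_{\bnu}$ by the definition \eqref{def_conormal_jump}; crucially this need not vanish, since $S_h^n$ is only Lipschitz and hence $\bnu_h|_{K_S^1}\neq-\bnu_h|_{K_S^2}$. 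On the temporal faces the outer conormal equals $+\bnu_\partial$ at $t_n$ and $-\bnu_\partial$ at $t_{n-1}$; using $\bnu_\partial\cdot\bn_{S_h}=0$ from \eqref{ndelta_defs} one gets $\alpha_h^{-1}(\bP_{S_h}\bw_S)\cdot\bnu_\partial = R$, while the three-dimensional boundary measure at a fixed time reduces to the spatial measure $\dif s_h$ on $\Gamma_h^n(t_n)$, resp. $\Gamma_h^n(t_{n-1})$; the one-sided limits $u_-^n$, $u_+^{n-1}$ arise because the traces are taken from within $I_n$. Collecting the three boundary groups gives precisely the remaining terms of \eqref{PI-unschoen}.

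I expect the main obstacle to be the careful treatment of the non-smooth, generally non-shape-regular geometry of $S_h^n$: justifying the piecewise tangential divergence theorem together with the well-definedness of the face traces on possibly degenerate curved elements, correctly bookkeeping the two-sided conormals so that the interior faces assemble into the jump term $\bw_S\cdot[\alpha_h^{-1}]_{\bnu}$ rather than cancelling, and the accompanying density argument that upgrades the smooth identity to $u,v\in H^1(S_h^n)$.
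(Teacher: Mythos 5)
Your proposal is correct. Note that the paper does not actually contain a proof of Theorem~\ref{PI_Theorem}: it defers to the appendix of the reference \cite[Appendix A]{sass2022}. Your route --- applying the tangential divergence theorem elementwise to the tangential field $uv\,\alpha_h^{-1}\bP_{S_h}\bw_S$ on each $K_S\in\mathcal{T}_{S_h^n}$, using the product rule together with $\mathring{u}=(\bP_{S_h}\bw_S)\cdot\nabla_{S_h}u$, exploiting $\bP_{S_h}\bnu_h=\bnu_h$ and $\bP_{S_h}\bnu_\partial=\bnu_\partial$ so that the face contributions become $\bw_S\cdot[\alpha_h^{-1}]_{\bnu}$ and $R$, and then upgrading from piecewise smooth to $H^1$ by density --- is exactly the argument that the structure of the identity dictates (the elementwise divergence term and the non-cancelling conormal jumps are fingerprints of the piecewise Gauss theorem on a Lipschitz surface), so it is in essence the same proof as the cited one. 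The points you flag as requiring care (validity of the divergence theorem on the curved, possibly degenerate pieces, trace well-posedness, and the continuity of each term under $H^1$-convergence needed for the density step) are indeed the only technical gaps, and they are standard.
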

We briefly comment on the result \eqref{PI-unschoen}. Comparing this identity with its continuous analogue \cref{PIcont}, we observe a similar structure. The general function $\alpha_h$ serves as a discrete variant of the term $\alpha$. Due to the non-smoothness of $S_h^n$, the identity \eqref{PI-unschoen} reveals several perturbations which are not present in \eqref{PIcont} but are natural. The identity \cref{PI-unschoen} is equal to \eqref{PIcont} if we neglect geometry errors and consider $\alpha_h=\alpha$, in particular we then have $S_h=S$, $R=1$ and $[\alpha_h^{-1}]_{\bnu}\restrict{F}=0$. Similar conormal jump terms between elements as in \cref{PI-unschoen} naturally occur in the DG framework as well.

\subsection{Transformation between space-time surface differential operators}

In this section we derive useful transformation formulas between continuous and discrete space-time differential operators, similar to the stationary setting, as in e.g., \cite[Section 2.3]{demlow2007adaptive}, where the identity
\begin{equation}
	(\nabla_\Gamma v_h^l)^e=\left(\bI-\delta \gh\right)^{-1}\tilde{\gp}_h\nabla_{\Gamma_h}v, \quad v_h\in H^1(S_h^n),\label{trafo_demlow_grad}
\end{equation}
with $\tilde{\gp}_h\coloneqq \bI-\frac{\n_h\n^T}{\n_h\cdot \n}$ is shown. 
 On $S_h$ we define
\begin{equation}
	\ga_h\coloneqq \frac{\mu_h}{\sqrt{1+V_h^2}} \tilde{\gp}_h(\bI -\delta \gh)^{-2}\tilde{\gp}_h.\label{ah_def}
\end{equation}
As in \cite[Section 2.3]{demlow2007adaptive}, we obtain from \cref{formelmuh}, \Cref{fancyformelcoro} and \cref{trafo_demlow_grad} the relation
\begin{equation}
	\int_{S^n}\frac{1}{\alpha }\nabla_{\Gamma}u_h^l\cdot\nabla_{\Gamma}v_h^l \dif \sigma =\int_{S_h^n}\ga_h\nabla_{\Gamma_h}u_h\cdot \nabla_{\Gamma_h}v_h\dif \sigma_h, \quad  u_h,v_h\in H^1(S_h^n).\label{Trafo_With_A_h}
\end{equation} 
We establish a similar relation as in \eqref{trafo_demlow_grad} but now with space-time gradients. The result \eqref{trafo_demlow_grad} can not directly be applied in our space-time setting, as the projection $\bp$ acts orthogonal to $\Gamma(t)$, but not to $S$. 
We introduce the following $(4 \times 4)$-matrices defined on $U$:
\begin{equation}
	\gp_0\coloneqq \mathbf{I}-\frac{\n_{S_h}\n_0^T}{\n_{S_h}\cdot \n_0},\quad \tilde{\gd}\coloneqq \begin{pmatrix}
		\mathbf{I}-\delta \mathbf{H} & -V^e\n\\
		-\delta \ddt{\n}{t}^T &(\alpha^e)^2
	\end{pmatrix},\label{D_tilde}
\end{equation}
where $\n_0^{T}=(\n^{T},0)$. Using  $\bn \cdot \ddt{\n}{t}=0 $ it follows that $\det (\tilde \gd)=(\alpha^e)^2 \det (\mathbf{I}-\delta \mathbf{H}) > 0$, hence $\tilde \gd$ invertible.
\begin{lemma}\label[lemma]{l6}
	For $1 \leq n \leq N$, the following holds:
	\begin{equation}
		(\nabla_Sv_h^l)^e=\tilde{\gd}^{-1}\gp_0\nabla_{S_h}v_h,  \quad v_h\in H^1(S_h^n).\label{2.2.19}
	\end{equation}
\end{lemma}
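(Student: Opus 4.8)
The plan is to imitate the stationary computation behind \eqref{trafo_demlow_grad}, performing the chain rule with the \emph{spatial} closest point projection $\p$ in the full space-time variables, and then using that the vector to be reconstructed is tangent to $S$. First, since $v_h^l$ is constant along spatial normals we have $v_h^l=v_h^l\circ\p$ on $U^n$ and $v_h^l\restrict{S_h^n}=v_h$. Writing $w:=v_h^l\restrict{S^n}$ and differentiating $v_h^l=w\circ\p$, the chain rule gives
\begin{equation*}
	\nabla_{(\bx,t)} v_h^l=(D_{(\bx,t)}\p)^T(\nabla_S v_h^l)^e,
\end{equation*}
where only the intrinsic gradient $\nabla_S w=\nabla_S v_h^l$ survives because $D_{(\bx,t)}\p$ maps into the tangent space of $S$. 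Taking the tangential projection onto $S_h^n$ yields $\nabla_{S_h}v_h=\bP_{S_h}(D_{(\bx,t)}\p)^T g$ with $\bP_{S_h}=\bI-\bn_{S_h}\bn_{S_h}^T$ and $g:=(\nabla_S v_h^l)^e$. As an extension of a vector tangent to $S$, $g$ obeys $\bn_S^e\cdot g=0$; splitting $g=(g_s,g_t)$ with $g_s\in\R^3$, this reads $\bn\cdot g_s=V^e g_t$ by \eqref{ns_schreibweise}.

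Next I would remove the projector after multiplication by $\gp_0$. From the definition of $\gp_0$ in \eqref{D_tilde} one checks immediately that $\gp_0\bn_{S_h}=0$, hence $\gp_0\bP_{S_h}=\gp_0$ and $\gp_0\nabla_{S_h}v_h=\gp_0(D_{(\bx,t)}\p)^T g$. Since $\tilde\gd$ is invertible, the claim \eqref{2.2.19} therefore reduces to verifying the pointwise identity
\begin{equation*}
	\tilde\gd\,g=\gp_0(D_{(\bx,t)}\p)^T g \qquad\text{for all }g\text{ with }\bn\cdot g_s=V^e g_t.
\end{equation*}

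To this end I would compute $D_{(\bx,t)}\p$ explicitly. With $\nabla_\bx\delta=\bn$ and $\nabla_\bx\bn=\gh$ the spatial block is $\bI-\bn\bn^T-\delta\gh$; differentiating $\phi(\p(\bx,t))=0$ in $t$ and using \eqref{formnormalvel} gives $\partial_t\delta=-V^e$ on $U$, so the time column of the spatial part of $\p$ equals $V^e\bn-\delta\,\partial_t\bn$. Setting $a:=(\bI-\bn\bn^T-\delta\gh)g_s$, the identities $\bn\cdot\bn=1$ and $\gh\bn=0$ force $\bn\cdot a=0$, so the rank-one correction of $\gp_0$ drops out. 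Inserting the tangency constraint and $1+(V^e)^2=(\alpha^e)^2$ then leaves the spatial part $(\bI-\delta\gh)g_s-V^e g_t\bn$ and the time part $(\alpha^e)^2 g_t-\delta\,\partial_t\bn\cdot g_s$, which is precisely $\tilde\gd\,g$ read off from \eqref{D_tilde}. Applying $\tilde\gd^{-1}$ gives \eqref{2.2.19}.

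The main obstacle will be the space-time Jacobian bookkeeping: establishing $\partial_t\delta=-V^e$ away from $S$ and tracking exactly where the tangency relation $\bn\cdot g_s=V^e g_t$ enters, since it is this constraint — rather than any plain matrix identity — that converts the generic expression $\gp_0(D_{(\bx,t)}\p)^T$ into the structurally different matrix $\tilde\gd$.
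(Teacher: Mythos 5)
Your proof is correct and is essentially the paper's own argument: the same chain rule through the spatial closest point projection, the same explicit formula for $D_{(\bx,t)}\p$ (including $\partial_t\delta=-V^e$), and the same mechanism ($\gp_0\bn_{S_h}=0$, hence $\gp_0=\gp_0\bP_{S_h}$, together with $\bn_0\cdot (D_{(\bx,t)}\p)^Tg=0$) for passing to $\nabla_{S_h}v_h$. The only difference is presentational: where you verify $(D_{(\bx,t)}\p)^T g=\tilde{\gd}\,g$ pointwise on vectors $g$ tangent to $S$ (using the constraint $\bn\cdot g_s=V^e g_t$), the paper packages the identical computation as the matrix factorization $\gd_{\p}^T=\tilde{\gd}\,\gp_S^e$ with $\gp_S^e=\bI-\bn_S^e(\bn_S^e)^T$, and asserts it without the verification you carry out.
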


\begin{proof}
We have
    \[
	D_{(\bx,t)} \p= \begin{pmatrix}
		(\mathbf{I}-\mathbf{n}\mathbf{n}^T-\delta\mathbf{H}) & V^e\n-\delta \ddt{\n}{t} \\
		0& 1 
	\end{pmatrix} =:\gd_{\p}.
	\]
	For the spatially extended space-time normal vector $\n_{S}^e:= \frac{1}{\alpha^e} \begin{pmatrix} \n \\ - V^e \end{pmatrix}$
	we define
	\[
	\gp_S^e := \bI - \n_{S}^e (\n_{S}^e)^T=\begin{pmatrix} \bI -\frac{\n\n^T}{(\alpha^e)^2}& \frac{V^e\n}{(\alpha^e)^2} \\
		\frac{V^e\n^T}{(\alpha^e)^2}&\frac{1}{(\alpha^e)^2} \end{pmatrix}.
\]
We have the relation $\gd_{\p}^T=\tilde{\gd}\gp_S^e$. For  $v_h\in H^1(S_h^n)$,  the chain rule yields 
	\[
		\nabla_{(\x,t)}v_h^l=\nabla_{(\x,t)}(v_h^l\circ \p)=\gd_{\p}^T\big(\nabla_{(\x,t)}v_h^l\big)
		\circ \p.
	\]
	Hence,
	\begin{equation}
		(\nabla_Sv_h^l) \circ \p=\tilde{\gd}^{-1}\nabla_{(\x,t)}v_h^l.\label{2.2.16}
	\end{equation}
	Using $\gd_{\p} \n_0 =0$ on $S_h^n$ we get $\nabla_{(\x,t)}v_h^l= \gp_0\nabla_{(\x,t)}v_h^l$ and combining this with $\gp_0=\gp_0 \bP_{S_h}$  we get 
	\begin{equation*}
		(\nabla_Sv_h^l)\circ \p=\tilde{\gd}^{-1}\gp_0\nabla_{S_h}v_h \quad \text{on}~~ S_h^n.
	\end{equation*}
\end{proof}
\section{Basic geometric error estimates} \label{secGeometry}
Recall that the approximate space-time surface is based on the parametric mesh deformation mapping $\Theta_h^n$, cf.~\ref{Sapprox}. Properties of this mapping and the related ``ideal'' mapping $\Psi$, cf. Section~\ref{secSurface}, 
are derived in the recent work \cite{HeimannLehrenfeld}. Using these properties, bounds for the distance between $S$ and $S_h$ and between normals on $S$ and $S_h$ can be derived. Below in Lemma~\ref{l1} we collect a few main results from \cite{HeimannLehrenfeld} that are useful in  our analysis. Based on these results we derive further estimates for geometric quantities (Lemma~\ref{l_mu} and \ref{l3}) and for differences between derivatives of a function on $S_h$ and the derivatives of the corresponding lifted version on $S$. 

Clearly, for higher order approximation results one needs assumptions on the accuracy of $\phi_h \approx \phi$. The following assumptions are introduced in \cite[Assumptions 2.4, 2.5, 2.6]{HeimannLehrenfeld}.
\begin{assumption}\label[assumption]{ass_phi_h}
We assume an approximation ${\phi_h\in V_h^{k_{g,\spa},k_{g,\ti}}}$ of   $\phi$ that satisfies for all $1 \leq n \leq N$ and  $(m_s,m_q) \in \big( \{0,\ldots,k_{g,\spa}+1\} \times \{0,1\}\big) \cup \big( \{0\} \times \{0,\ldots, k_{g,\ti}+1\} \big)$ 
\begin{equation} \label{AssL1}
	\|D^{m_s} \partial_t^{m_q}(\phi_h-\phi)\|_{L^\infty(\T_n^\Gamma \times I_n)}\lesssim h^{k_{g,\spa}+1-m_s}+\Delta t^{k_{g,\ti}+1-m_q}.
\end{equation}
Represent $\phi_h$ as $\phi_h(\bx,t)= \sum_{m=0}^{k_{g,\ti}} \mathcal{X}_{\tau_m^n}(t) \phi_{h,\tau_m^n}(\bx)$, $t\in I_n$, $\phi_{h,\tau_m^n}(\bx):= \phi_h(\bx,\tau_m^n)$. We assume that for all $1 \leq n \leq N$ there exists  $\phi_{\Delta t}(\bx,t) = \sum_{m=0}^{k_{g,\ti}} \mathcal{X}_{\tau_m^n}(t) \phi_{\tau_m^n}(\bx)$, with $\phi_{\tau_m^n} \in C(\Omega) \cap C^{k_{g,s}+1}(\Omega_n^\Gamma)$  such that the following holds:
\begin{equation} \label{AssL2}
	\|D^{m_s}(\phi_{h,\tau_m^n}-  \phi_{\tau_m^n})\|_{L^\infty(\T_n^\Gamma)}\lesssim h^{k_{g,\spa}+1-m_s}, \quad 0 \leq m_s \leq k_{g,s}+1.
\end{equation}
\textcolor{black}{We assume that for all $1 \leq n \leq N$ there is $\phi_H \in C^{k_{g,\ti}+1}(I_n;V_h^{k_{g,s}})$ such that for $m_q = 0,\dots, k_{g,q} + 1$ and $m_s\in \{0,1\}$:
\begin{align}
\norm{\partial_t^{m_q}(\phi_H - \phi_h)}_{L^\infty(\Tau_n^\Gamma\times I_n)}&\lesssim \Delta t^{k_{g,q}+1-m_q}\label{AssDelta_t1}\\
\norm{D^{m_s}\partial_t^{m_q}(\phi_H - \phi_h)}_{L^\infty(\Tau_n^\Gamma\times I_n)}&\lesssim h^{k_{g,s}+1-m_s} + \Delta t^{k_{g,q}+1-m_q}.\label{AssDelta_t2}
\end{align}}
\end{assumption}
Note that these assumptions are satisfied if $\phi$ is sufficiently smooth and $\phi_h=I^tI^s \phi$ is a tensor product space time nodal interpolation in the finite element space $V_h^{k_{g,\spa},k_{g,\ti}}$. One can then take $\phi_{\Delta t}=I^t \phi$ and $\phi_H=I^s \phi$.
In the remainder of this paper we assume that Assumption~\ref{ass_phi_h} holds. The following results are derived in \cite{HeimannLehrenfeld}.
\begin{lemma}\label[lemma]{l1}
	\begin{align}
	\norm{\Theta_h^n-\Psi}_{L^\infty(\T_n^\Gamma \times I_n)}&\lesssim h^{k_{g,\spa}+1} + \Delta t^{k_{g,\ti}+1},\label{theta_psi_est}\\
	\norm{D\Theta_h^n-D\Psi}_{L^\infty(\T_n^\Gamma \times I_n)}&\lesssim h^{k_{g,\spa}} + \Delta t^{k_{g,\ti}+1},\label{Dtheta_psi_est}\\
		\norm{\Theta_h^n-\id}_{L^\infty(\T_n^\Gamma \times I_n)}&\lesssim h^{2} + \Delta t^{k_{g,\ti}+1},\label{theta_to_id}\\
      \norm{\delta}_{L^\infty(S_h)}&\lesssim h^{k_{g,\spa}+1} + \Delta t^{k_{g,\ti}+1},\label{dist_est} \\
		\norm{\n-\n_h}_{L^\infty(S_h)}&\lesssim h^{k_{g,\spa}} + \Delta t^{k_{g,\ti}},\label{spa_normal_est}\\
			\norm{\n_S^e-\n_{S_h}}_{L^\infty(S_h)}&\lesssim \textcolor{black}{h^{k_{g,\spa}} + \Delta t^{k_{g,\ti}}}. \label{st_normal_est}
\end{align}
	\end{lemma}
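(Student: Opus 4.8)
The plan is to reduce all six bounds to the two fundamental convergence estimates \eqref{theta_psi_est}--\eqref{Dtheta_psi_est} for the parametric mapping, which I would take verbatim from the convergence analysis of the space-time FE-blending construction in \cite{HeimannLehrenfeld}; these express that $\Theta_h^n$ approximates the ideal mapping $\Psi$ to the stated orders as a consequence of \Cref{ass_phi_h}. Everything else then follows either by a short elementary argument from these two, or is an accompanying result of the same reference.

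First, for \eqref{theta_to_id} I would write $\norm{\Theta_h^n-\id}\leq \norm{\Theta_h^n-\Psi}+\norm{\Psi-\id}$ and invoke \eqref{theta_psi_est} together with the second-order closeness $\norm{\Psi-\id}_{L^\infty(\T_n^\Gamma\times I_n)}\lesssim h^2$ of the ideal spatial mapping. The latter is a defining property of $\Psi$ (cf. \cite{lehrenfeld2016high,HLP2022}): since $\Psi_t$ maps the piecewise planar $\Gamma_{\lin}(t)$ onto $\Gamma(t)$ and $\Gamma_{\lin}(t)$ is only a second order accurate approximation of $\Gamma(t)$, the displacement produced by $\Psi_t$ is $O(h^2)$ on the relevant strip. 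As $k_{g,\spa}\geq 1$ the spatial term $h^{k_{g,\spa}+1}$ is absorbed into $h^2$, while the temporal term $\Delta t^{k_{g,\ti}+1}$ is inherited from \eqref{theta_psi_est}.

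Next, for \eqref{dist_est} the key is the exactness relation $\Psi_t(\Gamma_{\lin}(t))=\Gamma(t)$. Given $(\bx,t)\in S_h^n=\Theta_h^n(S_{\lin}^n)$, I would choose the preimage $(\by,t)\in S_{\lin}^n$ with $\bx=\Theta_{h,\spa}^n(\by,t)$ and $\by\in\Gamma_{\lin}(t)$. Then $\Psi_t(\by)\in\Gamma(t)$, so the spatial signed distance obeys $\abs{\delta(\bx,t)}\leq\abs{\bx-\Psi_t(\by)}=\abs{\Theta_{h,\spa}^n(\by,t)-\Psi_t(\by)}\leq\norm{\Theta_h^n-\Psi}_{L^\infty(\T_n^\Gamma\times I_n)}$, and \eqref{dist_est} follows directly from \eqref{theta_psi_est}.

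Finally, for the normal bounds \eqref{spa_normal_est}--\eqref{st_normal_est} I would compare the definitions \eqref{nh_defs}, \eqref{nslin_def_anddefintion_nsh_higherorder} of $\bn_h$ and $\bn_{S_h}$, which apply $(D\Theta_{h,\spa}^n)^{-T}$ (resp. $(D_{(\bx,t)}\Theta_h^n)^{-T}$) to the linear normals and renormalize, with the exact normals $\bn$, $\bn_S^e$, which arise from applying the ideal Jacobian $(D\Psi)^{-T}$ to the same linear normals. Since the map $A\mapsto A^{-T}\bv/\norm{A^{-T}\bv}$ is locally Lipschitz in $A$ (the Jacobians being uniformly nonsingular, a property of the construction established in \cite{HeimannLehrenfeld} and consistent with \eqref{theta_to_id}), the normal errors are controlled by the Jacobian error \eqref{Dtheta_psi_est}. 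The delicate point, and the main obstacle to a fully self-contained derivation, is the loss of one order in time relative to \eqref{Dtheta_psi_est}: the estimates carry $\Delta t^{k_{g,\ti}}$ rather than $\Delta t^{k_{g,\ti}+1}$. This stems from the temporal interpolation $\tilde\Psi_t=\sum_m\mathcal{X}_{\tau_m^n}(t)\Psi_{\tau_m^n}$ underlying the construction, whose derivative incurs a one-order lower interpolation error; I would therefore cite the corresponding estimates of \cite{HeimannLehrenfeld} rather than reprove this tracking of orders here.
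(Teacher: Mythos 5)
Your handling of the first five bounds coincides in substance with the paper's proof: \eqref{theta_psi_est}--\eqref{Dtheta_psi_est} are quoted from \cite[Theorem 4.12]{HeimannLehrenfeld}; \eqref{theta_to_id} is the triangle inequality $\norm{\Theta_h^n-\id}\leq\norm{\Theta_h^n-\Psi}+\norm{\Psi-\id}$ with the second-order bound on $\norm{\Psi-\id}$ (the paper cites \cite[Corollary 4.2]{HeimannLehrenfeld} for exactly this ingredient); and \eqref{spa_normal_est} follows by the normal-transformation-plus-Lipschitz argument you describe (the paper cites \cite[Lemma 3.3]{grande2016higher}). Your derivation of \eqref{dist_est} from the exactness $\Psi_t(\Gamma_{\lin}(t))=\Gamma(t)$, namely $\abs{\delta(\bx,t)}\leq\abs{\Theta_{h,\spa}^n(\by,t)-\Psi_t(\by)}$, is a clean direct alternative to the paper's route through \eqref{AA1}.

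The genuine gap is in \eqref{st_normal_est}. The discrete space-time normal $\bn_{S_h}$ is built from the \emph{space-time} Jacobian $D_{(\bx,t)}\Theta_h^n$, so your Lipschitz-in-the-Jacobian reduction needs control of $\partial_t\bigl(\Theta_{h,\spa}^n-\Psi_\spa\bigr)$, which is \emph{not} what \eqref{Dtheta_psi_est} provides (that is a bound for the spatial Jacobian, and its bound $h^{k_{g,\spa}}+\Delta t^{k_{g,\ti}+1}$ could not produce the stated $\Delta t^{k_{g,\ti}}$ anyway). Moreover, your diagnosis of the loss of one temporal order is incomplete: differentiating the time interpolant in general produces not only $\Delta t^{k_{g,\ti}}$ but also an anisotropy term of size $h^{k_{g,\spa}+1}/\Delta t$ (the time difference quotient of the spatial error). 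This term is precisely what the additional structural parts \eqref{AssDelta_t1}--\eqref{AssDelta_t2} of \Cref{ass_phi_h} are there to eliminate; the paper states explicitly after the lemma that without them an extra $h^{k_{g,\spa}+1}/\Delta t$ appears in \eqref{st_normal_est}. Your argument, as formulated, cannot see this obstruction. Finally, even the citable result \cite[Lemma 5.12]{HeimannLehrenfeld} does not directly give \eqref{st_normal_est}: it bounds $\norm{\n_S\circ\Phi_h^{st}-\n_{S_h}}_{L^\infty(S_h)}$, where $\Phi_h^{st}$ is a space-time mapping from that reference, whereas the lemma here concerns $\n_S^e=\n_S\circ\p$ with the \emph{spatial} closest-point projection. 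The paper's actual proof of \eqref{st_normal_est} is exactly this conversion: a triangle inequality using the Lipschitz continuity of $\n_S$, $\norm{\p-\id}_{L^\infty(S_h)}=\norm{\delta}_{L^\infty(S_h)}$ from \eqref{dist_est}, and $\norm{\Phi_h^{st}-\id}_{L^\infty(S_h)}\lesssim h^{k_{g,\spa}+1}+\Delta t^{k_{g,\ti}+1}$ from \cite[Corollary 5.9]{HeimannLehrenfeld}. That step — reconciling the two different extensions/projections — is the nontrivial content of the paper's proof and is absent from your proposal.
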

\begin{proof}
The results \eqref{theta_psi_est} and \eqref{Dtheta_psi_est} are given in \cite[Theorem 4.12]{HeimannLehrenfeld} and the result \eqref{theta_to_id} follows from  \cite[Corollary 4.2]{HeimannLehrenfeld} combined with  \eqref{theta_psi_est}. From \eqref{theta_psi_est} and \eqref{AA1} one easily derives the result \eqref{dist_est}, cf. \cite[Lemma 3.8]{lehrenfeld2017analysis}. The result \eqref{spa_normal_est}is obtained from \eqref{theta_psi_est} and \eqref{Dtheta_psi_est} with straightforward arguments, cf. \cite[Lemma 3.3]{grande2016higher}. Concerning \eqref{st_normal_est}  we note the following. In \cite[Lemma 5.12]{HeimannLehrenfeld} the estimate
\begin{equation} \label{estH}
 \norm{\n_S \circ \Phi_h^{st}-\n_{S_h}}_{L^\infty(S_h)} \lesssim \textcolor{black}{h^{k_{g,\spa}} + \Delta t^{k_{g,\ti}}}
\end{equation}
is proved. We refer to \cite{HeimannLehrenfeld} for the definition of  $\Phi_h^{st}$, which satisfies $\norm{\Phi_h^{st}-{\rm id}}_{L^\infty(S_h)} \lesssim h^{k_{g,\spa}+1} + \Delta t^{k_{g,\ti}+1}$ (cf. \cite[Corollary 5.9]{HeimannLehrenfeld}). Using this we obtain
\begin{align*}
 \norm{\n_S^e-\n_{S_h}}_{L^\infty(S_h)} & =\norm{\n_S \circ \bp -\n_{S_h}}_{L^\infty(S_h)}  \\ & \leq \|\n_S \circ \bp - \n_S \circ \Phi_h^{st}\|_{L^\infty(S_h)}+ \norm{\n_S \circ \Phi_h^{st}-\n_{S_h}}_{L^\infty(S_h)} \\
 & \lesssim \|\bp -{\rm id}\|_{L^\infty(S_h)} +\norm{\Phi_h^{st}-{\rm id}}_{L^\infty(S_h)} +  \norm{\n_S \circ \Phi_h^{st}-\n_{S_h}}_{L^\infty(S_h)}.
\end{align*}
Using $ \|\bp -{\rm id}\|_{L^\infty(S_h)}=\|\delta\|_{L^\infty(S_h)}$ and the estimate \eqref{estH} we obtain the result \eqref{st_normal_est}.
\end{proof}
\ \\[1ex]
\textcolor{black}{Without the assumptions \cref{AssDelta_t1} and \cref{AssDelta_t2} an extra term $h^{k_{g,s}+1}/\Delta t$ occurs in the bound in  \cref{st_normal_est}, cf. \cite[Remark 5.13]{HeimannLehrenfeld}.}

In the following lemma we derive bounds for differences between spatial surface measures on $S$ and $S_h$ and between space-time surface measures on $S$ and $S_h$, cf. Lemma~\ref{lemmeasure}.

\begin{lemma}\label[lemma]{l_mu} The following uniform estimates hold:
\begin{align} 
		\norm{1-\mu_h}_{L^\infty(S_{h})}&\lesssim  h^{k_{g,\spa}+1} + \Delta t^{k_{g,\ti}+1},\label{einminusmu_Space}\\
	\norm{1-\mu_h^S}_{L^\infty(S_{h})}&\lesssim \textcolor{black}{h^{k_{g,\spa}} + \Delta t^{k_{g,\ti}}}. \label{einminusmu_ST}
\end{align}
\end{lemma}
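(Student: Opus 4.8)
The plan is to estimate separately the two factors that appear in the product formulas \eqref{formelmuh} for $\mu_h$ and \eqref{formelmuhn} for $\mu_h^S$, and then to combine them. Both formulas contain the curvature product $P:=\prod_{i=1}^2(1-\delta\kappa_i)$. Since Assumption~\ref{a1} gives $\norm{D^2\phi}\lesssim 1$, the principal curvatures $\kappa_i$ are uniformly bounded on $U$, and together with the distance bound \eqref{dist_est} this yields
\[
  |1-P| \lesssim \norm{\delta}_{L^\infty(S_h)} \lesssim h^{k_{g,\spa}+1}+\Delta t^{k_{g,\ti}+1},
\]
so the curvature factor contributes at most at order $h^{k_{g,\spa}+1}+\Delta t^{k_{g,\ti}+1}$ in both cases. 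It then remains to control the respective normal factors $\n\cdot\n_h$ and $\alpha^e\,\n_0\cdot\n_{S_h}$, after which both estimates follow by writing each of $\mu_h$, $\mu_h^S$ as a product of two factors close to $1$ and expanding.

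For \eqref{einminusmu_Space} I would exploit that $\n$ and $\n_h$ are both unit vectors, so that $\n\cdot\n_h = 1 - \tfrac12\norm{\n-\n_h}^2$ exactly. Hence, by the spatial normal bound \eqref{spa_normal_est},
\[
 |1-\n\cdot\n_h| = \tfrac12\norm{\n-\n_h}^2 \lesssim \big(h^{k_{g,\spa}}+\Delta t^{k_{g,\ti}}\big)^2 \lesssim h^{k_{g,\spa}+1}+\Delta t^{k_{g,\ti}+1},
\]
where the last step uses $h,\Delta t\lesssim 1$, $k_{g,\spa},k_{g,\ti}\ge 1$ and Young's inequality on the mixed term. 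Writing $\mu_h=(\n\cdot\n_h)\,P=(1-a)(1-b)$ with $|a|,|b|\lesssim h^{k_{g,\spa}+1}+\Delta t^{k_{g,\ti}+1}$ gives $|1-\mu_h|\le |a|+|b|+|a||b|$ and hence \eqref{einminusmu_Space}.

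For \eqref{einminusmu_ST} the same quadratic cancellation is \emph{not} available, and this is the one point requiring care. Here one of the two vectors in the inner product is the fixed spatial vector $\n_0$, not a normal of $S_h$, so I would instead compare against the exact surface using the continuous identity $\alpha^e\,\n_0\cdot\n_S^e = \n\cdot\n = 1$, which follows directly from the definition $\n_S^e=\tfrac{1}{\alpha^e}(\n^T,-V^e)^T$ in the proof of \Cref{l6}. Subtracting,
\[
 \alpha^e\,\n_0\cdot\n_{S_h} - 1 = \alpha^e\,\n_0\cdot(\n_{S_h}-\n_S^e),
\]
and since $\alpha^e=\sqrt{1+(V^e)^2}\lesssim 1$ (as $\bw$ is bounded) and $\norm{\n_0}=1$, the space-time normal bound \eqref{st_normal_est} yields $|\alpha^e\,\n_0\cdot\n_{S_h}-1|\lesssim \norm{\n_S^e-\n_{S_h}}_{L^\infty(S_h)}\lesssim h^{k_{g,\spa}}+\Delta t^{k_{g,\ti}}$. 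Combining this first-order factor with the higher-order factor $P$ as in the previous paragraph gives \eqref{einminusmu_ST}. The expected main obstacle is exactly this contrast: recognizing that the spatial factor gains an extra order through the unit-vector identity, whereas the space-time factor, involving $\n_0$, inherits only the first-order accuracy of the normal approximation \eqref{st_normal_est}, which is precisely what makes the two bounds differ by one order.
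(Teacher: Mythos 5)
Your proposal is correct and follows essentially the same route as the paper's proof: both estimates are obtained from the product formulas \eqref{formelmuh} and \eqref{formelmuhn}, with the curvature factor controlled via \eqref{dist_est}, the spatial normal factor gaining an extra order through the unit-vector identity $\n\cdot\n_h=1-\tfrac12\norm{\n-\n_h}^2$ and \eqref{spa_normal_est}, and the space-time factor bounded only to first order via the identity $\alpha^e\,\n_0\cdot\n_S^e=1$ (from \eqref{ns_schreibweise}) together with \eqref{st_normal_est}. Your explicit treatment of the mixed term $h^{k_{g,\spa}}\Delta t^{k_{g,\ti}}$ via Young's inequality merely spells out a step the paper leaves implicit.
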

\begin{proof}
	Using the identity $\mathbf{n}\cdot \mathbf{n}_{h}=1-\frac{\norm{\mathbf{n}-\mathbf{n}_{h}}^2}{2}$ on $S_h$, the result \cref{formelmuh}, the uniform boundedness of the mean curvatures $\kappa_i$ and  \eqref{dist_est}-\eqref{spa_normal_est},  we obtain the estimate \eqref{einminusmu_Space}:
	\begin{align*}
		\norm{1-\mu_h}_{L^\infty(S_{h})}&=\norm{1-\left(1-\frac{\norm{\mathbf{n}-\mathbf{n}_{h}}^2}{2}\right)\prod_{i=1}^{2}(1-\kappa_i\delta)}_{L^\infty(S_{h})}\\
		&\lesssim \norm{\delta}_{L^\infty(S_{h})}+{\norm{\mathbf{n}-\mathbf{n}_{h}}^2_{L^\infty(S_{h})}} \lesssim  h^{k_{g,\spa}+1} + \Delta t^{k_{g,\ti}+1}.
	\end{align*}
	Using the identity \eqref{formelmuhn}, the result \eqref{ns_schreibweise} and the space-time normal bound \cref{st_normal_est} we obtain  
	\begin{align*}
		\norm{1-\mu_h^S}_{L^\infty(S_{h})}&=\norm{1-\alpha^e \, \n_0\cdot \mathbf{n}_{S_{h}}\prod_{i=1}^{2}(1-\delta\kappa_i)}_{L^\infty(S_{h})}\\
		&\lesssim  \norm{1-\alpha^e \, \n_0\cdot \mathbf{n}_{S}^e}_{L^\infty(S_{h})}+\textcolor{black}{h^{k_{g,\spa}} + \Delta t^{k_{g,\ti}}}\\
		& \lesssim \norm{1-\n_0\cdot\begin{pmatrix}\mathbf{n} \\-V^e \end{pmatrix}}_{L^\infty(S_{h})}+\textcolor{black}{h^{k_{g,\spa}} + \Delta t^{k_{g,\ti}}} \\
		&\lesssim \norm{1-\n\cdot \n}_{L^\infty(S_{h})}+\textcolor{black}{h^{k_{g,\spa}} + \Delta t^{k_{g,\ti}}}   \lesssim \textcolor{black}{h^{k_{g,\spa}} + \Delta t^{k_{g,\ti}}},
	\end{align*}
which proves  the estimate \eqref{einminusmu_ST}.
\end{proof}

\begin{remark}\label[remark]{delicate_alpha} \rm
Note that there is a loss of one power of $h$ and of $\Delta t$ in the bound for the space-time surface measure difference 
in \eqref{einminusmu_ST} compared to the bound for the space surface measure difference  in \eqref{einminusmu_Space}. These bounds 
are sharp and the reason for the loss of these powers is the following. 
	The function $\mu_h(\x,t)=\frac{\dif s(\p(\x,t)}{\dif s_h(\x,t)}$ is the quotient of the surface measures 
	of the interfaces $\Gamma(t)$ and $\Gamma_h^n(t)$, $n=1, \ldots,N$. The correspondence of points on $\Gamma_h(t)$ with 
	points on $\Gamma(t)$ is given by the \emph{orthogonal}  (spatially) closest point projection $\bp$. 
	The function $\mu_h^S(\x,t)=\frac{\dif \sigma(\p(\x,t)}{\dif \sigma_h(\x,t)}$ is the quotient of the surface 
	measures of the surfaces $S$ and $S_h^n$, $n=1,\ldots,N$. For the correspondence of points on $S_h^n$ with points on $S$ 
	the same projection $\bp$ is used, which, however, is in general \emph{not} an orthogonal projection on $S$. This loss of the 
	orthogonality property causes the loss of one power of $h$ and of $\Delta t$.
	\end{remark} 
	\begin{remark} \label[remark]{remVh} \rm
	Using \eqref{formelmuh}, \eqref{formelmuhn} and \eqref{nsh_repres} we get 
	\[
		\frac{\mu_h^S}{\mu_h}=\sqrt{1+ (V^2)^e} \, \frac{\bn_0 \cdot \bn_{S_h}}{\bn \cdot \bn_h} =\frac{\sqrt{1+(V^e)^2}
		}{\sqrt{1+V_h^2}}\quad \text{on }S_h^n.
	\]
	Using this and \eqref{einminusmu_Space}-\eqref{einminusmu_ST} we see that (for $ h \lesssim \Delta t$)  $V_h$ is an approximation of $V^e$ of 
	order (only) $\mathcal{O}(h^{k_{g,\spa}}+\Delta t^{k_{g,\ti}})$. In the application in \Cref{section_application} we need a more accurate approximation of $V^e$.
\end{remark}
\ \\[1ex]
A further  geometric quantity relevant for the accuracy of the space-time surface approximation $S_h$ is the  conormal jump ${[\cdot]_{\bnu}}_{|F}$ across interior faces $ F \in \mathcal{F}_I$. In the next lemma we derive bounds for this quantity. 
\begin{lemma}\label[lemma]{l3}
	For any curved face $F\in \mathcal{F}_I$ we have
	\begin{align}
		\norm{{[1]_{\bnu}}_{|F}}_{L^\infty(F)}&\lesssim \textcolor{black}{h^{k_{g,\spa}}+\Delta t^{k_{g,\ti}}}, \label{InnerJumpnE}\\
		\norm{\gp_S^e{[1]_{\bnu}}_{|F}}_{L^\infty(F)}&\lesssim \textcolor{black}{h^{2k_{g,\spa}} + \Delta t^{2k_{g,\ti}}}. \label{InnerJumpnEImproved}
	\end{align}
	The conormal jump $[1]_{\bnu}$ is defined in \cref{def_conormal_jump}.
\end{lemma}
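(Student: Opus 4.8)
The plan is to treat the conormal jump as a purely geometric quantity living in the two--dimensional normal plane of the common face and to reduce both bounds to the normal deviation estimate \eqref{st_normal_est}. Fix an interior face $F=K_S^1\cap K_S^2\in\mathcal{F}_I^n$, write $\bn_{S_h}^i:=\bn_{S_h}\restrict{K_S^i}$ for the two one--sided space--time normals, and abbreviate $\rho:=h^{k_{g,\spa}}+\Delta t^{k_{g,\ti}}$. Let $T_F\subset\R^4$ be the (two--dimensional) tangent space of $F$ and $N_F:=T_F^\perp$ its (two--dimensional) orthogonal complement. The first step is the observation that, on $K_S^i$, the conormal $\bnu_h\restrict{K_S^i}$ ($i=1,2$) is the outward unit vector tangent to $S_h^n$ and orthogonal to $T_F$; since $\bn_{S_h}^i\in N_F$, this forces $\bnu_h\restrict{K_S^i}\in N_F$ with $\bnu_h\restrict{K_S^i}\cdot\bn_{S_h}^i=0$, i.e. $\bnu_h\restrict{K_S^i}=\pm J\bn_{S_h}^i$, where $J$ is the fixed $90^\circ$ rotation of the plane $N_F$. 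Inspecting the flat limit $\bn_{S_h}^1=\bn_{S_h}^2$, in which the outward conormals of the two adjacent elements are antipodal, fixes the signs and yields the key identity
\begin{equation*}
	{[1]_{\bnu}}\restrict{F}=\bnu_h\restrict{K_S^1}+\bnu_h\restrict{K_S^2}=J\big(\bn_{S_h}^1-\bn_{S_h}^2\big).
\end{equation*}

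For \eqref{InnerJumpnE} I would then use that $J$ is an isometry, so $\|{[1]_{\bnu}}\restrict{F}\|=\|\bn_{S_h}^1-\bn_{S_h}^2\|$. Since $\bn_S^e=\bn_S\circ\bp$ is continuous, it is single--valued on $F$, and inserting it via the triangle inequality gives $\|\bn_{S_h}^1-\bn_{S_h}^2\|\le\|\bn_{S_h}^1-\bn_S^e\|+\|\bn_S^e-\bn_{S_h}^2\|\lesssim\rho$ by \eqref{st_normal_est} applied on each element, which is \eqref{InnerJumpnE}.

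The improved estimate \eqref{InnerJumpnEImproved} is the delicate part and amounts to showing that the \emph{tangential} part of the jump is of quadratic order. Set $\bw:=\bn_{S_h}^1-\bn_{S_h}^2\in N_F$. Because $\bn_{S_h}^i$ and $\bn_S^e$ are unit vectors, $\bn_{S_h}^i\cdot\bn_S^e=1-\tfrac12\|\bn_{S_h}^i-\bn_S^e\|^2=1-\mathcal{O}(\rho^2)$, hence $\bw\cdot\bn_S^e=\mathcal{O}(\rho^2)$: the normal difference is tangential up to second order. Writing $\bn_S^e=\bn_S^{e,\parallel}+\bn_S^{e,\perp}$ with $\bn_S^{e,\parallel}\in N_F$, $\bn_S^{e,\perp}\in T_F$, near--orthogonality of $\bn_S^e$ to $F$ gives $\|\bn_S^{e,\perp}\|=\sup_{\mathbf{t}\in T_F,\|\mathbf{t}\|=1}|(\bn_S^e-\bn_{S_h}^i)\cdot\mathbf{t}|\lesssim\rho$. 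I would expand $\bw$ in the orthonormal basis $\{\hat{\bn},\hat{\bnu}\}$ of $N_F$ with $\hat{\bn}:=\bn_S^{e,\parallel}/\|\bn_S^{e,\parallel}\|$ and $\hat{\bnu}:=J\hat{\bn}$, writing $\bw=p\,\hat{\bn}+q\,\hat{\bnu}$; then $p=\mathcal{O}(\rho^2)$ (from $\bw\cdot\bn_S^e=\mathcal{O}(\rho^2)$ and $\|\bn_S^{e,\parallel}\|=1-\mathcal{O}(\rho^2)$) while $q=\mathcal{O}(\rho)$. Since $J\hat{\bn}=\hat{\bnu}$ and $J\hat{\bnu}=-\hat{\bn}$,
\begin{equation*}
	\gp_S^e\,{[1]_{\bnu}}\restrict{F}=\gp_S^e J\bw=p\,\gp_S^e\hat{\bnu}-q\,\gp_S^e\hat{\bn}.
\end{equation*}
Here $\|\gp_S^e\hat{\bnu}\|\le1$, so the first term is $\mathcal{O}(\rho^2)$, while $\gp_S^e\hat{\bn}=-\|\bn_S^{e,\parallel}\|^{-1}\gp_S^e\bn_S^{e,\perp}$ (using $\gp_S^e\bn_S^e=0$) satisfies $\|\gp_S^e\hat{\bn}\|\lesssim\rho$, so the second term is $\mathcal{O}(\rho)\cdot\mathcal{O}(\rho)=\mathcal{O}(\rho^2)$, giving \eqref{InnerJumpnEImproved}.

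The main obstacle is the rotation identity together with the correct sign bookkeeping across the Lipschitz kink, and then isolating the two independent mechanisms that each contribute a factor $\rho$ in the tangential direction: the unit--length constraint, which makes $\bn_{S_h}^1-\bn_{S_h}^2$ tangential up to $\mathcal{O}(\rho^2)$ and hence forces $p=\mathcal{O}(\rho^2)$, and the near--orthogonality of $\bn_S^e$ to $F$, which makes $\gp_S^e\hat{\bn}=\mathcal{O}(\rho)$. The geometrically subtle point is that the $\mathcal{O}(\rho)$ present in the full jump \eqref{InnerJumpnE} survives only at order $\rho^2$ after the tangential projection, because the rotation $J$ interchanges the (small, $\mathcal{O}(\rho^2)$) normal component and the (larger, $\mathcal{O}(\rho)$) conormal component of $\bw$.
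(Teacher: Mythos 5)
Your proposal is correct, and its first half coincides with the paper's proof: your identity $[1]_{\bnu}\restrict{F}=J\big(\bn_{S_h}^1-\bn_{S_h}^2\big)$ is exactly the paper's $Q(\bnu_1+\bnu_2)=\n_1-\n_2$ (with $J=Q^T$), after which \eqref{InnerJumpnE} follows in both proofs from the isometry property, the triangle inequality through $\bn_S^e$, and \eqref{st_normal_est}; your sign-fixing via the flat limit is at the same level of rigor as the paper's appeal to equality of the angles between $\bn_1,\bn_2$ and between $\bnu_1,-\bnu_2$ (its Figure~\ref{cosine_angles}), so this is not a gap relative to the paper's own standard. Where you genuinely depart is \eqref{InnerJumpnEImproved}. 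The paper expands $Q\gp_S^e[1]_{\bnu}$ in the element-adapted frame $\{\n_1,\bnu_1,\s_1,\s_2\}$ and bounds four explicit coefficients $\alpha_1,\alpha_2,\beta_1,\beta_2$, each by $\epsilon^2$, using $|1+\bnu_1\cdot\bnu_2|=\tfrac12\norm{\bnu_1+\bnu_2}^2$ and $|\bnu_i\cdot\n_S^e|,\,|\s_i\cdot\n_S^e|\lesssim\epsilon$. You instead work entirely inside the two-plane $N_F$: decomposing $\bw=\bn_{S_h}^1-\bn_{S_h}^2=p\,\hat{\bn}+q\,\hat{\bnu}$ in the frame adapted to $\bn_S^e$, you get $p=\mathcal{O}(\rho^2)$ from the unit-length identity (the paper's $\alpha_2$-type estimate), $q=\mathcal{O}(\rho)$, and then exploit that $J$ swaps the two directions while $\norm{\gp_S^e\hat{\bn}}\lesssim\rho$ because $\gp_S^e\bn_S^e=0$ and the $T_F$-component of $\bn_S^e$ is $\mathcal{O}(\rho)$ — the same fact as the paper's $|\s_i\cdot\n_S^e|\lesssim\epsilon$. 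So the ingredients are identical, but your two-term decomposition is more economical and makes the mechanism transparent (the $\mathcal{O}(\rho)$ part of the normal jump is rotated by $J$ into precisely the direction that $\gp_S^e$ damps by another factor $\rho$), whereas in the paper this is buried in the coefficient computation. A final point in your favor: the paper's proof carries the quantity $\epsilon=(1+h/\Delta t)h^{k_{g,\spa}}+\Delta t^{k_{g,\ti}}$, a leftover from the version of \eqref{st_normal_est} without the assumptions \eqref{AssDelta_t1}--\eqref{AssDelta_t2}, while your $\rho=h^{k_{g,\spa}}+\Delta t^{k_{g,\ti}}$ is consistent with the lemma as stated, and $\rho^2\lesssim h^{2k_{g,\spa}}+\Delta t^{2k_{g,\ti}}$ closes the argument.
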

\begin{proof} 
Take an arbitrary $F\in \mathcal{F}_I$ and a fixed $(\x,t)\in F$. Let $\s_1,\s_2$ be two orthonormal vectors that span 
the tangent plane $\hat{F}$ of the face $F$ at $(\x,t)$.
We omit the argument $(\x,t)$ in this proof. Let $K_S^1$, $K_S^2\in \mathcal{T}_{S_h}$ be the elements of 
the triangulation of the space-time surface with the common face $F$. We define the space-time normals 
on $K_S^1$, $K_S^2$ by $\n_1:=\n_{S_h^1}$ and $\n_2:=\n_{S_h^2}$ respectively. The corresponding conormals for $F$ are 
denoted 
by $\bnu_1$ and $\bnu_2$. Note that 
	$\left\lbrace \n_1, \bnu_1, \s_1, \s_2\right\rbrace$ and $\left\lbrace\n_2, \bnu_2, \s_1, \s_2\right\rbrace$
form orthonormal bases in $\R^4$ and $\hat F^\perp = {\rm span}\left\lbrace\n_1, \bnu_1\right\rbrace = 
{\rm span}\left\lbrace\n_2, \bnu_2\right\rbrace$.  We define the orthogonal matrix 
\begin{equation*}
	Q\coloneqq \begin{pmatrix} \n_1& \bnu_1& \s_1 &\s_2 \end{pmatrix}
	\begin{pmatrix} \bnu_1& -\n_1& \s_1 &\s_2 \end{pmatrix}^T= \n_1\bnu_1^T-\bnu_1(\n_1)^T+\s_1\s_1^T +\s_2 \s_2^T,
\end{equation*}
 which is a rotation by $\frac{\pi}{2}$ in the two-dimensional plane $\hat F^\perp$. 
 We have  
	$Q\bnu_1=   \n_1
$ and $\bnu_2 \cdot Q \bnu_2=0$.
From the latter and $F^\perp={\rm span}\left\lbrace\n_2, \bnu_2\right\rbrace$ it follows that $Q \bnu_2 = \pm \bn_2$. 
By construction the angle between $\bn_1$ and $\bn_2$ is the same as between  $\bnu_1$ and $- \bnu_2$,
cf. \Cref{cosine_angles}. Hence, $\bn_1 \cdot \bn_2= - \bnu_1 \cdot \bnu_2= -(Q\bnu_1)\cdot (Q\bnu_2)=- 
\bn_1 \cdot Q \bnu_2$. We conclude that $Q \bnu_2=-\bn_2$ holds.

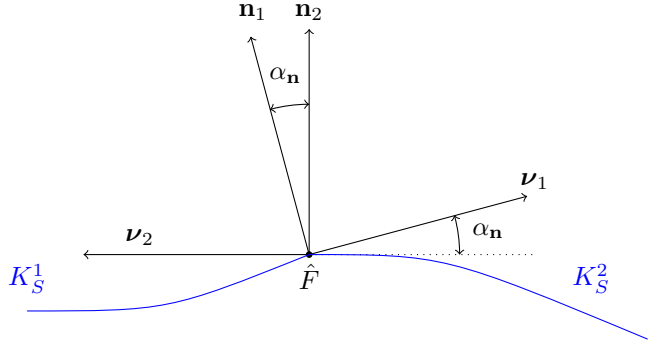
\begin{figure}[!htbp]
\centering
\begin{tikzpicture}[scale = 1.5]
	\newcommand{\ptsize}{0.7pt}
\filldraw circle(\ptsize) (0,0);
\draw[->] (0,0)-- (-2,0);
\draw[dotted] (0,0)-- (2,0);
\draw[->] (0,0) -- ++(15:2);
\draw[->] (0,0) -- ++(90:2);
\draw[->] (0,0) -- ++(105:2);
\draw[blue] (0,0)  .. controls (1.2,0) .. (3,-0.75);
\draw[blue] (0,0)  .. controls (-1.25,-0.5) .. (-2.5,-0.5);

\coordinate (a) at (1,0);
\coordinate (b) at (0,0);
\coordinate (c) at (1,0.27);
\pic["$\alpha_{\mathbf{n}}$",draw=black, <->, angle eccentricity=1.2, angle radius=2cm]
    {angle=a--b--c};
\coordinate (aa) at (0,1);
\coordinate (bb) at (0,0);
\coordinate (cc) at (-0.27,1);
\pic["$\alpha_{\mathbf{n}}$",draw=black, <->, angle eccentricity=1.2, angle radius=2cm]
    {angle=aa--bb--cc};

\node[scale=1, below] at (0, 0) {$\hat{F}$};
\node[scale=1, below, color=blue] at (2.5, 0) {$K_S^2$};
\node[scale=1, below, color=blue] at (-2.5, 0) {$K_S^1$};
\node[scale=1, above] at (-1.5, 0) {$\bnu_2$};
\node[scale=1, above] at (-0.5, 2) {$\n_1$};
\node[scale=1, above] at (0, 2) {$\n_2$};
\node[scale=1, above] at (2, 0.5) {$\bnu_1$};
\end{tikzpicture}
\caption{Illustration of normals and conormals for two neighbouring elements $K_S^1, K_S^2\in \mathcal{T}_{S_h}$.}
\label{cosine_angles}
\end{figure}

Now let $\bn_S^e = \bn_S^e(\bx,t)$ be the extended space-time normal to $S$, at $(\bx,t) \in F$. Using \eqref{st_normal_est}
 we get
\begin{equation} \label{A2} \begin{split}
		\norm{[1]_{\bnu}(\bx,t)}&=\norm{\bnu_1+\bnu_2}=\norm{Q\left(\bnu_1+\bnu_2\right)}=\norm{\n_1-\n_2} \\
		&\lesssim \norm{\n_1-\n_S^e}+\norm{\n_S^e-\n_2}\lesssim \textcolor{black}{h^{k_{g,\spa}} + \Delta t^{k_{g,\ti}}}, 
\end{split}
\end{equation}
which yields \cref{InnerJumpnE}. We turn to the proof of \cref{InnerJumpnEImproved}. 
We write $Q\gp_S^e[1]_{\bnu}= \alpha_1 \bn_1 +\alpha_2 \bnu_1 + \beta_1 \bs_1 + \beta_2 \bs_2$. A straightforward computation 
yields 
\begin{align*}
	 \alpha_1 & = 1+\bnu_1 \cdot \bnu_2 - \left(\bnu_1 \cdot \n_S^e\right)^2-\bnu_1 \cdot \n_S^e \, \n_S^e \cdot \bnu_2 \\ 
	 \alpha_2 & =-\n_1 \cdot \bnu_2+\n_1 \cdot \n_S^e \, \n_S^e \cdot \left(\bnu_1+\bnu_2\right)\\
	 \beta_i & = \s_i \cdot  \n_S^e \, \n_S^e\cdot (\bnu_1+\bnu_2), \quad i=1,2.
	\end{align*}
	Hence, $\norm{\gp_S^e[1]_{\bnu}}=\norm{Q\gp_S^e[1]_{\bnu}}= \left( \alpha_1^2 +\alpha_2^2 +\beta_1^2+\beta_2^2\right)^\frac12$.
	We introduce the notation $\epsilon:=(1+ \frac{h}{\Delta t} )h^{k_{g,\spa}} + \Delta t^{k_{g,\ti}}$ and use (cf. \eqref{A2})
	\begin{align*}
	  \norm{\bnu_1 +\bnu_2}  & \lesssim \epsilon, \quad |1+\bnu_1 \cdot \bnu_2| = \tfrac12\norm{\bnu_1 +\bnu_2}^2 
	  \lesssim \epsilon^2,\\
	  |\bnu_i \cdot \bn_S^e| & = |\bnu_i \cdot (\bn_S^e - \bn_i)| \lesssim \epsilon, \quad i=1,2,\\
	   |\bs_i \cdot \bn_S^e| & = |\bs_i \cdot (\bn_S^e - \bn_i)| \lesssim \epsilon, \quad\, i=1,2.
	\end{align*}
Thus we get $|\alpha_1| \lesssim \epsilon^2$ and $|\beta_i| \lesssim \epsilon^2$, $i=1,2$. It remains to bound $\alpha_2$. Due to $\n_1\cdot \n_S^e=1-\frac12\norm{\n_1-\n_S^e}^2= 1+\mathcal{O}(\epsilon^2)$, cf. \cref{st_normal_est}, and $\n_1\cdot \bnu_1=0$ we obtain
\begin{equation*}
	|\alpha_2|\lesssim |(\n_S^e-\n_1)\cdot(\bnu_1+\bnu_2)|\lesssim \epsilon^2.
\end{equation*}
Combining the results yields the estimate \cref{InnerJumpnEImproved}.
\end{proof}

We now derive an estimate for the difference between a tangential gradient on $S_h$ and the corresponding gradient on $S$ of the lifted function. This useful result is an easy consequence of the relation given in Lemma~\ref{l6}. 
\begin{lemma} \label{gradestimate}
For all $v_h \in H^1(S_h^n)$ the following uniform estimate holds:
\begin{equation}
 \norm{\nabla_{S_h}v_h- \big( \nabla_S v_h^l\big)^e}_{L^2(S_h^n)} \lesssim \big(\textcolor{black}{h^{k_{g,\spa}} + \Delta t^{k_{g,\ti}}}\big) \norm{\nabla_{S_h} v_h}_{L^2(S_h^n)}.
\end{equation}
\end{lemma}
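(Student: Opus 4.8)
The plan is to use the representation from Lemma~\ref{l6} to reduce the claim to a pointwise estimate of the deviation of a certain matrix from the tangential projection. By \eqref{2.2.19} we have $(\nabla_S v_h^l)^e = \tilde{\gd}^{-1}\gp_0\nabla_{S_h}v_h$, so that
\[
\nabla_{S_h}v_h - (\nabla_S v_h^l)^e = (\bI - \tilde{\gd}^{-1}\gp_0)\nabla_{S_h}v_h.
\]
Since $\nabla_{S_h}v_h$ is tangential, $\nabla_{S_h}v_h = \bP_{S_h}\nabla_{S_h}v_h$, and using the identity $\gp_0\bP_{S_h}=\gp_0$ (already exploited in the proof of Lemma~\ref{l6}) the right-hand side equals $(\bP_{S_h} - \tilde{\gd}^{-1}\gp_0)\nabla_{S_h}v_h$. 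Thus it suffices to prove the pointwise (a.e.\ on $S_h^n$) operator-norm bound $\norm{\bP_{S_h} - \tilde{\gd}^{-1}\gp_0}_2 \lesssim h^{k_{g,\spa}} + \Delta t^{k_{g,\ti}}$; squaring and integrating over $S_h^n$ then yields the asserted $L^2$-estimate.

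To establish this bound I would compare $\tilde{\gd}^{-1}\gp_0$ with its \emph{idealized} counterpart obtained by setting $\delta = 0$ in $\tilde{\gd}$ and replacing the discrete normal $\n_{S_h}$ by the exact extended normal $\n_S^e$. Denote these idealized matrices by $\tilde{\gd}_*$ (i.e.\ $\tilde{\gd}$ with the two $\delta$-terms removed) and $\gp_* := \bI - (\n_S^e\cdot\n_0)^{-1}\n_S^e\n_0^T$. A direct block computation, using $\n_S^e = (\alpha^e)^{-1}(\n^T, -V^e)^T$, $\n_S^e\cdot\n_0 = (\alpha^e)^{-1}$ and $(\alpha^e)^2 = 1 + (V^e)^2$, shows that $\tilde{\gd}_*^{-1}\gp_* = \gp_S^e = \bI - \n_S^e(\n_S^e)^T$; that is, the idealized product is exactly the tangential projection onto the tangent space of $S$.

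Writing $\bP_{S_h} - \tilde{\gd}^{-1}\gp_0 = (\bP_{S_h} - \gp_S^e) + (\tilde{\gd}_*^{-1}\gp_* - \tilde{\gd}^{-1}\gp_0)$, both summands are controlled by the geometric estimates of Lemma~\ref{l1}. For the first term, $\bP_{S_h} - \gp_S^e = \n_S^e(\n_S^e)^T - \n_{S_h}\n_{S_h}^T$ is bounded by $C\norm{\n_S^e - \n_{S_h}} \lesssim h^{k_{g,\spa}} + \Delta t^{k_{g,\ti}}$ via \eqref{st_normal_est}. For the second term I would use $\tilde{\gd}^{-1} - \tilde{\gd}_*^{-1} = \tilde{\gd}^{-1}(\tilde{\gd}_* - \tilde{\gd})\tilde{\gd}_*^{-1}$ together with $\norm{\tilde{\gd} - \tilde{\gd}_*}\lesssim\norm{\delta}_{L^\infty(S_h)} \lesssim h^{k_{g,\spa}+1} + \Delta t^{k_{g,\ti}+1}$ (from \eqref{dist_est} and the boundedness of $\gh$ and $\partial_t\n$ in Assumption~\ref{a1}), and the Lipschitz dependence of $\gp_0$ on $\n_{S_h}$ to get $\norm{\gp_* - \gp_0}\lesssim\norm{\n_S^e - \n_{S_h}}$. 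The main obstacle, and the point requiring care, is the uniformity of the constants: one must verify that $\det\tilde{\gd} = (\alpha^e)^2\det(\bI - \delta\gh)$ and $\n_{S_h}\cdot\n_0$ stay bounded away from zero uniformly in the (non shape-regular) mesh and cut configuration, so that $\tilde{\gd}^{-1}$, $\tilde{\gd}_*^{-1}$ and the map $\n_{S_h}\mapsto\gp_0$ are uniformly bounded and Lipschitz. This follows from $\alpha^e\geq 1$, the smallness of $\delta$, and the fact that both $\n_{S_h}$ and $\n_S^e$ lie within $O(h^{k_{g,\spa}} + \Delta t^{k_{g,\ti}})$ of the exact normal, whose inner product with $\n_0$ equals $(\alpha^e)^{-1}$ and is bounded below. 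Collecting the bounds yields $\norm{\bP_{S_h} - \tilde{\gd}^{-1}\gp_0}_2 \lesssim h^{k_{g,\spa}} + \Delta t^{k_{g,\ti}}$ and hence the claim.
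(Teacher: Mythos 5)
Your proposal is correct, and it shares the paper's skeleton — invoke Lemma~\ref{l6} to write the error as $(\bP_{S_h}-\tilde{\gd}^{-1}\gp_0)\nabla_{S_h}v_h$ and reduce everything to the pointwise bound $\norm{\bP_{S_h}-\tilde{\gd}^{-1}\gp_0}_{L^\infty(S_h^n)}\lesssim h^{k_{g,\spa}}+\Delta t^{k_{g,\ti}}$ — but it proves that matrix bound by a genuinely different decomposition. The paper uses the left-projection identity $\tilde{\gd}^{-1}\gp_0=\bP_S^e\tilde{\gd}^{-1}\gp_0$ (a consequence of \eqref{2.2.19}) together with the observation that $\bP_S^e\tilde{\gd}=\bP_S^e+\bP_S^e\tilde{\gd}_\Delta$ (since $\bP_S^e\n_S^e=0$), which yields the three-term splitting $\bP_{S_h}-\bP_S^e+\bP_S^e(\bI-\gp_0)+\bP_S^e\tilde{\gd}_\Delta\tilde{\gd}^{-1}\gp_0$ of \eqref{loc77}, each term bounded separately. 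You instead compute the geometry-error-free limit explicitly: your block identity $\tilde{\gd}_*^{-1}\gp_*=\gp_S^e$ is correct (it hinges on $(\alpha^e)^2=1+(V^e)^2$ and $\n_S^e\cdot\n_0=(\alpha^e)^{-1}$, and matches the formula for $\gp_S^e$ in the paper's proof of Lemma~\ref{l6}), and it shows that the transformation matrix collapses to the exact tangential projection of $S$ when $\delta=0$ and $\n_{S_h}=\n_S^e$; the bound then follows from standard perturbation of matrix inverses plus the Lipschitz dependence of $\bv\mapsto\bv\n_0^T/(\bv\cdot\n_0)$ on the normal. Both routes consume exactly the same geometric input — \eqref{st_normal_est} for the dominant $O(h^{k_{g,\spa}}+\Delta t^{k_{g,\ti}})$ normal error, \eqref{dist_est} for the one-order-smaller $\delta$-contributions — and both need the same uniformity facts, namely $\norm{\tilde{\gd}^{-1}}\lesssim 1$ via $\det\tilde{\gd}=(\alpha^e)^2\det(\bI-\delta\gh)$ and a uniform lower bound on $\n_{S_h}\cdot\n_0$, which you correctly identify as the delicate point and justify by the closeness of $\n_{S_h}$ to $\n_S^e$ and $\n_S^e\cdot\n_0=(\alpha^e)^{-1}\gtrsim 1$. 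What your version buys is transparency: the exact identity $\tilde{\gd}_*^{-1}\gp_*=\gp_S^e$ makes the consistency of Lemma~\ref{l6} visible and turns the estimate into routine perturbation theory; the cost is an extra block computation and an explicit Lipschitz argument. The paper's version avoids computing the idealized product at all and is slightly shorter, at the price of a less self-explanatory algebraic manipulation.
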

\begin{proof}
From \eqref{2.2.19} we obtain, with $\bP_0$ and $\tilde\bD$ as in \eqref{D_tilde},
\begin{equation} \label{AD}
 \nabla_{S_h} v_h - (\nabla_S v_h^l\big)^e = \nabla_{S_h} v_h - \tilde \bD^{-1} \bP_0  \nabla_{S_h} v_h = (\bP_{S_h}- \tilde \bD^{-1} \bP_0)  \nabla_{S_h} v_h. 
\end{equation}
From \eqref{2.2.19} it follows that $\tilde \bD^{-1} \bP_0= \bP_S^e \tilde \bD^{-1} \bP_0$ holds.  The definition of $\tilde{\gd}$ yields
\[
	\tilde{\gd}=\bI-\alpha^eV^e\begin{pmatrix}
		0 & \n_S^e \end{pmatrix}+ \tilde{\gd}_\Delta, \quad \tilde{\gd}_\Delta\coloneqq  \begin{pmatrix} -\delta \gh & 0 \\ -\delta \ddt{\n}{t} & 0\end{pmatrix},\label{Ad_schlange_delta_darstellung}
\]
and thus $\bP_S^e \tilde \bD= \bP_S^e+ \bP_S^e\tilde{\gd}_\Delta$, i.e., $\bP_S^e \tilde \bD^{-1}= \bP_S^e- \bP_S^e\tilde{\gd}_\Delta \tilde \bD^{-1}$. 
Hence, 
\begin{equation} \label{loc77} \begin{split}
 \bP_{S_h}- \tilde \bD^{-1} \bP_0  & = \bP_{S_h}- \bP_{S}^e\tilde \bD^{-1} \bP_0=\bP_{S_h}-\bP_{S}^e  \bP_0 + \bP_{S}^e\tilde{\gd}_\Delta\tilde \bD^{-1} \bP_0 \\
 & = \bP_{S_h}-\bP_{S}^e + \bP_{S}^e (\bI- \bP_0) + \bP_{S}^e\tilde{\gd}_\Delta\tilde \bD^{-1} \bP_0.
\end{split}
\end{equation}
From \eqref{st_normal_est} we get $\|\bP_{S_h}-\bP_{S}^e\|_{L^\infty(S_h^n)} \lesssim \textcolor{black}{h^{k_{g,\spa}} + \Delta t^{k_{g,\ti}}}$ and $\|\bP_{S}^e (\bI- \bP_0)\|_{L^\infty(S_h^n)}  \\ = \norm{\frac{\bP_S \bn_{S_h}\bn_0^T} {\bn_{S_{h}}^T\bn_0}}_{L^\infty(S_h^n)} \lesssim \textcolor{black}{h^{k_{g,\spa}} + \Delta t^{k_{g,\ti}}}$. With $\det (\tilde \gd)=(\alpha^e)^2 \det (\mathbf{I}-\delta \mathbf{H})$ and the definition of $\tilde \bD$ it follows that (for $h$ and $\Delta t$ sufficiently small) $\|\tilde \bD^{-1}\|_{L^\infty(S_h^n)} \lesssim 1$. With \eqref{dist_est} we get $\|\tilde{\gd}_\Delta\|_{L^\infty(S_h^n)}\lesssim h^{k_{g,\spa}+1} + \Delta t^{k_{g,\ti}+1}$. Hence, 
$\norm{ \bP_{S}^e\tilde{\gd}_\Delta\tilde \bD^{-1} \bP_0}_{L^\infty(S_h^n)}$ $\lesssim h^{k_{g,\spa}+1} + \Delta t^{k_{g,\ti}+1}$. Using  these estimates  in \eqref{loc77} we obtain
\begin{equation} \label{ADD}
 \norm{\bP_{S_h}- \tilde \bD^{-1} \bP_0}_{L^\infty(S_h^n)}\lesssim \textcolor{black}{ h^{k_{g,\spa}} + \Delta t^{k_{g,\ti}} }.
\end{equation}
Combining this with the result in \eqref{AD} completes the proof.
\end{proof}
\ \\[1ex]
The following corollary will be used in the consistency error analysis in Section~\ref{secconsistency}. Let $\bv_h: \, S_h \to \R^4$  be a vector-valued function with components given by $\bv_h=(v_{h,i}), {1\leq i \leq 4}$, and its surface gradient $\nabla_{S_h} \bv_h = \begin{pmatrix} \nabla_{S_h} v_{h,1}, \ldots, \nabla_{S_h} v_{h,4} \end{pmatrix}^T$.
\begin{corollary} \label{CorH}
 For all $\bv_h \in \big(W^{1,\infty}(S_h^n))^4$ the following uniform estimate holds:
\begin{equation} \label{estdiv}
 \norm{\div_{S_h}\bv_h- \big(\div_{S}\bv_h^l\big)^e}_{L^\infty(S_h^n)} \lesssim \big(\textcolor{black}{h^{k_{g,\spa}} + \Delta t^{k_{g,\ti}}}\big) \norm{\nabla_{S_h} \bv_h}_{L^\infty(S_h^n)}.
\end{equation}
\end{corollary}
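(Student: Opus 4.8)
The plan is to reduce the vector-valued statement to the componentwise scalar estimate that is already embedded in the proof of \Cref{gradestimate}. First I would recall that for a vector field on a (smooth piece of a) hypersurface in $\R^4$ the tangential divergence is the trace of the tangential gradient, i.e.\ $\div_{S_h}\bv_h=\sum_{i=1}^4 \mathbf{e}_i\cdot \nabla_{S_h}v_{h,i}$ and, correspondingly, $\div_{S}\bv_h^l=\sum_{i=1}^4 \mathbf{e}_i\cdot\nabla_{S}v_{h,i}^l$, where $\mathbf{e}_i$ denotes the $i$th standard basis vector of $\R^4$. Since the lift \eqref{liftingdef} and the extension \eqref{ext_def} act componentwise and commute with taking the inner product against the \emph{constant} vectors $\mathbf{e}_i$, one has $(\bv_h^l)_i=v_{h,i}^l$ and $(\div_{S}\bv_h^l)^e=\sum_{i=1}^4\mathbf{e}_i\cdot(\nabla_{S}v_{h,i}^l)^e$. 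Hence the quantity to be estimated splits as
\begin{equation*}
 \div_{S_h}\bv_h-\big(\div_{S}\bv_h^l\big)^e=\sum_{i=1}^4\mathbf{e}_i\cdot\big(\nabla_{S_h}v_{h,i}-(\nabla_{S}v_{h,i}^l)^e\big).
\end{equation*}

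Next I would apply, to each scalar component $v_{h,i}\in W^{1,\infty}(S_h^n)\subset H^1(S_h^n)$, the identity derived in the proof of \Cref{gradestimate}: by \eqref{AD} we have $\nabla_{S_h}v_{h,i}-(\nabla_{S}v_{h,i}^l)^e=(\bP_{S_h}-\tilde\bD^{-1}\bP_0)\nabla_{S_h}v_{h,i}$, where the matrix $\bP_{S_h}-\tilde\bD^{-1}\bP_0$ is purely geometric and \emph{independent} of $\bv_h$. This is the key observation: the same matrix appears for every component, so the whole difference is obtained by applying $(\bP_{S_h}-\tilde\bD^{-1}\bP_0)$ to $\nabla_{S_h}v_{h,i}$ for each $i$.

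Finally I would invoke the $L^\infty$ bound \eqref{ADD}, namely $\norm{\bP_{S_h}-\tilde\bD^{-1}\bP_0}_{L^\infty(S_h^n)}\lesssim h^{k_{g,\spa}}+\Delta t^{k_{g,\ti}}$, together with $\norm{\mathbf{e}_i}=1$, to estimate
\begin{equation*}
 \norm{\div_{S_h}\bv_h-(\div_{S}\bv_h^l)^e}_{L^\infty(S_h^n)}\leq\sum_{i=1}^4\norm{(\bP_{S_h}-\tilde\bD^{-1}\bP_0)\nabla_{S_h}v_{h,i}}_{L^\infty(S_h^n)}\lesssim\big(h^{k_{g,\spa}}+\Delta t^{k_{g,\ti}}\big)\norm{\nabla_{S_h}\bv_h}_{L^\infty(S_h^n)},
\end{equation*}
which is the claimed bound. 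I do not expect a genuine obstacle here, since this is essentially a componentwise repackaging of \Cref{gradestimate}. The only points requiring (minor) care are the componentwise commutation of lift and extension with the basis inner products, and the fact that although \Cref{gradestimate} is stated in the $L^2$ norm, its proof actually establishes the pointwise bound \eqref{ADD} on the geometric matrix $\bP_{S_h}-\tilde\bD^{-1}\bP_0$, which is exactly what is needed to obtain the present $L^\infty$ estimate.
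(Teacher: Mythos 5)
Your proof is correct and is essentially the paper's own argument: the paper likewise writes $\div_{S_h}\bv_h=\tr(\nabla_{S_h}\bv_h)$, applies the identity \eqref{AD} componentwise to get $\div_{S_h}\bv_h-(\div_S\bv_h^l)^e=\tr\big(\nabla_{S_h}\bv_h(\bP_{S_h}-\tilde\bD^{-1}\bP_0)^T\big)$, and concludes with the pointwise bound \eqref{ADD}. Your componentwise sum over the basis vectors $\mathbf{e}_i$ is just the trace written out explicitly, so the two arguments coincide.
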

\begin{proof}
 Note that $\div_{S_h}\bv_h= \tr \big(\bP_{S_h} \nabla_{(x,t)} \bv_h \bP_{S_h} \big)= \tr (\nabla_{S_h} \bv_h)$. Component-wise application of \eqref{AD} yields
 \[
   \div_{S_h}\bv_h- \big(\div_{S}\bv_h^l\big)^e= \tr \big(\nabla_{S_h} \bv_h  (\bP_{S_h}- \tilde \bD^{-1} \bP_0)^T\big).  
 \]
The result \eqref{estdiv} follows using \eqref{ADD}. 
\end{proof}

\subsubsection*{Choice of $\alpha_h$} 
In \Cref{PI_Theorem} we used a general function $\alpha_h$. For the formula \eqref{PI-unschoen} to be a discrete analogon of  \eqref{PIcont} we choose an  $\alpha_h$ that approximates the function $\alpha=(1+V^2)^{\frac12}$. A natural choice for this approximation seems $\alpha_h=(1+V_h^2)^{\frac12}$, which  has accuracy $\mathcal{O}(h^{k_{g,\spa}} + \Delta t^{k_{g,\ti}})$, cf. Remark~\ref{remVh}. In the application that we consider below (Section~\ref{section_application}) this approximation  turns out to be \emph{not} sufficiently accurate for an optimal order discretization error.
We now introduce a (natural) specific choice for $\alpha_h$ which has an  order of accuracy that is one higher than for $(1+V_h^2)^{\frac12}$.  Let $\tilde{\phi}_{h} \in V_h^{k_{g,\spa}+1,k_{g,\ti}+1}$ be a one order higher order approximation of the level set function $\phi$ than the one introduced in  \Cref{ass_phi_h}. More precisely, we assume an approximation ${\tilde \phi_h\in V_h^{k_{g,\spa}+1,k_{g,\ti}+1}}$ of   $\phi$ that satisfies for all $1 \leq n \leq N$ and  $(m_s,m_q) \in \big( \{0,\ldots,k_{g,\spa}+1\} \times \{0,1\}\big) \cup \big( \{0\} \times \{0,\ldots, k_{g,\ti}+1\} \big)$ 
\begin{equation} \label{AssL4}
	\|D^{m_s} \partial_t^{m_q}(\tilde \phi_h-\phi)\|_{L^\infty(\T_n^\Gamma \times I_n)}\lesssim h^{k_{g,\spa}+2-m_s}+\Delta t^{k_{g,\ti}+2-m_q}.
\end{equation}
We define
\begin{equation}
	 \alpha_h:=\sqrt{1 + \tilde V_h^2}, \quad \text{with}~\tilde{V}_h\coloneqq - \frac{\partial \tilde{\phi}_h}{\partial t} \norm{\nabla \tilde{\phi}_h}^{-1}.\label{vh_tilde_def}
\end{equation}
Note that the function $\tilde{V}_h$ does not use the mesh deformation and is   an approximation of order $\mathcal{O}(h^{k_{g,\spa}+1}+\Delta t^{k_{g,\ti}+1})$ of the normal velocity $V=\bw \cdot \bn$. \\
\emph{This choice for $\alpha_h$ is used  in the remainder of this work}. \\ A very similar one is used in the numerical experiments in \cite{sass2022}. The accuracy estimates \eqref{AssL4}  imply
\begin{alignat}{2}
   \norm{\alpha^e-\alpha_h}_{L^\infty(S_h)}&\lesssim h^{k_{g,\spa}+1} + \Delta t^{k_{g,\ti}+1}&&, \label{alpha_est}\\
	\norm{\sqrt{1+V_h^2}-\alpha_h}_{L^\infty(S_h)}&\lesssim h^{k_{g,\spa}} + \Delta t^{k_{g,\ti}}&&,\label{alpha_h_sqrtvh}\\
	\norm{\nabla_{(\bx,t)}(\alpha^e-\alpha_h)}_{L^\infty(K_S)}&\lesssim h^{k_{g,\spa}} + \Delta t^{k_{g,\ti}}&&, K_S\in \Tau_{S_h},\label{alpha_grad_est} \\
	\norm{\bP_S^e\left[\alpha_h^{-1}\right]_{\bnu |F}}_{L^\infty(F)} &\lesssim h^{k_{g,\spa}+1} + \Delta t^{k_{g,\ti}+1}&&, F\in \mathcal{F}.\label{alpha_h_good_jump}
\end{alignat}
{\color{black} The result \cref{alpha_h_good_jump} follows from \eqref{alpha_est}, $\left[\alpha^e\right]_{\bnu |F}=0$ and the estimate \eqref{InnerJumpnEImproved}. 
}
We finally derive  an estimate for  $R=\frac{1}{\alpha_h}\w_S\cdot \n_{\partial }$, with $\n_\partial=\frac{1}{\sqrt{1+V_h^2}}(V_h\n_h^T,1)^T$, which  appears in the partial integration formula in \Cref{PI_Theorem}.
\begin{lemma}\label[lemma]{l4}
	The following estimate holds:
	\begin{align}
		\norm{R-1}_{L^\infty(S_h)}&\lesssim \textcolor{black}{h^{k_{g,\spa}} + \Delta t^{k_{g,\ti}}}.\label{second_geom_est}
	\end{align}
\end{lemma}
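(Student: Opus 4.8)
The plan is to reduce the statement to the approximation bounds already collected in \Cref{l1} and \Cref{remVh} together with the accuracy estimate \eqref{alpha_h_sqrtvh}. First I would make $R$ fully explicit: inserting $\bw_S^T=(\bw^T,1)$ and the definition \eqref{ndelta_defs} of $\bnu_\partial$ gives
\begin{equation*}
 R=\frac{1}{\alpha_h}\,\bw_S\cdot\bnu_\partial=\frac{1+V_h\,(\bw\cdot\bn_h)}{\alpha_h\sqrt{1+V_h^2}}.
\end{equation*}
The continuous counterpart of this quantity is identically $1$: replacing $\bn_h,V_h,\alpha_h$ by the exact analogues $\bn,V,\alpha$ of \eqref{ns_schreibweise} one has $1+V(\bw\cdot\bn)=1+V^2=\alpha^2$, which cancels the denominator $\alpha\sqrt{1+V^2}=\alpha^2$. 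Since $\alpha_h=\sqrt{1+\tilde V_h^2}\ge1$ and $\sqrt{1+V_h^2}\ge1$, and since $\tilde V_h$ and $V_h$ are uniformly bounded (being $\mathcal{O}(h^{k_{g,\spa}}+\Delta t^{k_{g,\ti}})$ approximations of the bounded $V^e$, cf. \eqref{alpha_est} and \Cref{remVh}), the denominator $\alpha_h\sqrt{1+V_h^2}$ is bounded below by $1$ and above by a constant. Hence the prefactor $(\alpha_h\sqrt{1+V_h^2})^{-1}$ is $\sim1$ and it suffices to bound the numerator of $R-1$.

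Second, I would carry out the algebra in
\begin{equation*}
 R-1=\frac{1+V_h\,(\bw\cdot\bn_h)-\alpha_h\sqrt{1+V_h^2}}{\alpha_h\sqrt{1+V_h^2}}.
\end{equation*}
Using \eqref{alpha_h_sqrtvh}, i.e. $|\alpha_h-\sqrt{1+V_h^2}|\lesssim h^{k_{g,\spa}}+\Delta t^{k_{g,\ti}}$, together with the boundedness of $\sqrt{1+V_h^2}$, I would write $\alpha_h\sqrt{1+V_h^2}=(1+V_h^2)+\mathcal{O}(h^{k_{g,\spa}}+\Delta t^{k_{g,\ti}})$, so that the numerator simplifies to
\begin{equation*}
 V_h\,(\bw\cdot\bn_h)-V_h^2+\mathcal{O}(h^{k_{g,\spa}}+\Delta t^{k_{g,\ti}})=V_h\,(\bw\cdot\bn_h-V_h)+\mathcal{O}(h^{k_{g,\spa}}+\Delta t^{k_{g,\ti}}).
\end{equation*}
This reduces the whole problem to estimating $\bw\cdot\bn_h-V_h$.

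Finally, I would split off the exact normal velocity $V^e=\bw\cdot\bn$ through
\begin{equation*}
 \bw\cdot\bn_h-V_h=\bw\cdot(\bn_h-\bn)+(V^e-V_h).
\end{equation*}
The first term is controlled by $\|\bw\|_{L^\infty}\,\|\bn-\bn_h\|_{L^\infty(S_h)}\lesssim h^{k_{g,\spa}}+\Delta t^{k_{g,\ti}}$ via \eqref{spa_normal_est}, and the second by $\|V^e-V_h\|_{L^\infty(S_h)}\lesssim h^{k_{g,\spa}}+\Delta t^{k_{g,\ti}}$ from \Cref{remVh}. Multiplying by the bounded factor $V_h$ and using the uniform lower bound on the denominator then yields $\|R-1\|_{L^\infty(S_h)}\lesssim h^{k_{g,\spa}}+\Delta t^{k_{g,\ti}}$. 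I expect the only genuinely delicate inputs to be those of this last step: that the denominator is uniformly bounded below and above, which rests on the a priori boundedness of $V_h,\tilde V_h$ valid once $h,\Delta t$ are small enough that the $\mathcal{O}$-perturbations are controlled; and the estimate $\|V^e-V_h\|_{L^\infty(S_h)}\lesssim h^{k_{g,\spa}}+\Delta t^{k_{g,\ti}}$ from \Cref{remVh}, which is exactly what causes the (non-improvable) first-order loss relative to the sharper bounds \eqref{alpha_est} and \eqref{einminusmu_Space}. Everything else is routine algebra.
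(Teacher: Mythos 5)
Your algebraic reduction is essentially the paper's: both arguments use $\alpha_h\ge 1$ together with \eqref{alpha_h_sqrtvh} to reduce $\norm{R-1}_{L^\infty(S_h)}$ to the term $V_h\,(\bw\cdot\bn_h-V_h)$, and both then split off the exact normal using \eqref{spa_normal_est}. The one genuine difference --- and the weak link in your proposal --- is the source of the key input $\norm{V^e-V_h}_{L^\infty(S_h)}\lesssim h^{k_{g,\spa}}+\Delta t^{k_{g,\ti}}$. You cite Remark~\ref{remVh}, but that remark is (i) stated only under the restriction $h\lesssim\Delta t$, whereas Lemma~\ref{l4} is unconditional, and (ii) justified there only through the measure-ratio identity $\mu_h^S/\mu_h=\sqrt{1+(V^e)^2}\,/\sqrt{1+V_h^2}$, which together with \eqref{einminusmu_Space}--\eqref{einminusmu_ST} controls $\bigl|\sqrt{1+V_h^2}-\sqrt{1+(V^e)^2}\bigr|$, i.e. $|V_h^2-(V^e)^2|$. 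This pins down the magnitude $|V_h|$ but not the sign of $V_h$: from $|V_h^2-(V^e)^2|\lesssim\epsilon$ alone one cannot conclude $|V_h-V^e|\lesssim\epsilon$ (consider $V^e=\sqrt{\epsilon}$, $V_h=-\sqrt{\epsilon}$). The paper avoids this by deriving the signed estimate directly: by Lemma~\ref{lemma_nsh_repres} the last component of $\n_{S_h}$ is $-V_h/\sqrt{1+V_h^2}$, by \eqref{ns_schreibweise} the last component of $\n_S^e$ is $-V^e/\sqrt{1+(V^e)^2}$, and \eqref{st_normal_est} bounds their difference; since $x\mapsto x/\sqrt{1+x^2}$ is strictly monotone with derivative bounded below on bounded sets, this yields $\norm{\w^e\cdot\n-V_h}_{L^\infty(S_h)}\lesssim h^{k_{g,\spa}}+\Delta t^{k_{g,\ti}}$ with the correct sign (and, as a byproduct, the uniform boundedness of $V_h$ that you also need). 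Substituting this one step for the citation of Remark~\ref{remVh} makes your argument complete and unconditional.

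A second, minor imprecision: in your final splitting you identify $\bw\cdot\bn$ with $V^e$ on $S_h$, but $V^e=\bw^e\cdot\bn$ involves $\bw$ evaluated at the projected point $\p(\bx,t)$, not at $(\bx,t)\in S_h$. The decomposition therefore needs the extra term $(\bw-\bw^e)\cdot\bn$, which is harmless: the paper controls exactly this by $\norm{\w_S-\w_S^e}_{L^\infty(S_h)}\lesssim h^{k_{g,\spa}+1}+\Delta t^{k_{g,\ti}+1}$, a consequence of \eqref{dist_est} and the smoothness of $\bw$. With these two repairs your proof coincides, step for step, with the paper's.
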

\begin{proof}
	Note  the estimate $	\norm{\w^e\cdot \n -V_h}_{L^\infty(S_h)}\lesssim \textcolor{black}{h^{k_{g,\spa}} + \Delta t^{k_{g,\ti}}}$, which follows from \cref{st_normal_est} and \Cref{lemma_nsh_repres}. 	From \eqref{dist_est} and the smoothness of $\bw$ we get $\|\w_S - \w_S^e\|_{L^\infty(S_h)}\lesssim h^{k_{g,\spa}+1} + \Delta t^{k_{g,\ti}+1} $. Using this,  \cref{alpha_h_sqrtvh} and \cref{spa_normal_est} yields
\begin{align*}
	&\norm{R-1}_{L^\infty(S_h)}=\norm{\frac{\w_S\cdot \n_{\partial }}{\alpha_h}-1}_{L^\infty(S_h)}\lesssim \norm{\bw_S^e \cdot \n_{\partial } - \alpha_h}_{L^\infty(S_h)}+h^{k_{g,\spa}+1} + \Delta t^{k_{g,\ti}+1}\\
	&\quad \lesssim \norm{V_h\w^e\cdot \n_h+1-(1+V_h^2)}_{L^\infty(S_h)}+h^{k_{g,\spa}} + \Delta t^{k_{g,\ti}}\\
	&\quad \lesssim  \norm{\w^e\cdot \n -V_h}_{L^\infty(S_h)} +h^{k_{g,\spa}} + \Delta t^{k_{g,\ti}} \lesssim \textcolor{black}{h^{k_{g,\spa}} + \Delta t^{k_{g,\ti}}}.
\end{align*}
\end{proof}

\section{Application to a surface diffusive transport equation}\label{section_application}
We consider a well-known basic model for convection and molecular diffusion of a surface species, cf. \cite{James04,DEreview,GReusken2011}. The conservation of mass principle combined with Fick's law for the diffusive flux leads to the parabolic surface partial differential equation
\begin{equation} \label{surfactant1}  \begin{split}
		\dot{u}+ u\Div_{\Gamma}\bw-\mu_d \Delta_{\Gamma} u&=f\quad  \text{on } \Gamma(t),\quad t\in (0,T],
		\\
		u(\cdot,0)&=0\quad \text{on } \Gamma(0), 
\end{split} \end{equation}
for the scalar unknown function $u=u(\bx,t)$. Here $\mu_d >0$ denotes the constant diffusion coefficient and $\dot{u}=\frac{\pa u^e}{\pa t}$ + $\bw\cdot \nabla u^e$ the material derivative along the velocity field $\bw$,  cf. \eqref{def_weakdev_h1}.   
To simplify the error analysis below, we assume that the right-hand side $f$ is such that  $\int_{\Gamma(t)}f\dif s=0$, $t\in [0,T]$, holds. This condition is satisfied if \eqref{surfactant1} is the transformed variant of an equation as in \eqref{surfactant1} with a nonzero initial condition and a source term that is zero. From  the assumption on $f$ is follows that  a solution $u$ of \eqref{surfactant1} has the property $ \int_{\Gamma(t)} u \dif s=0$ for all  $t\in [0,T]$, which mimics a mass conservation property.

We introduce a variational space-time formulation of \cref{surfactant1}. A well-posed weak variational formulation  in a suitable broken Hilbert space $W_b$ is studied in \cite{olshanskii2014eulerian}. We do not specify this space here. Multiplying \cref{surfactant1} by test functions and integrating over the space-time slabs $S_n$ results in the following variational problem, with the usual notation $
[v]^n \coloneqq v_+^n-v_-^n\in L^2(\Gamma(t_n))$ and $[v]^0=v_+^0$. 
For a given $f\in L^2(S)$ determine $u\in W^b$
such that
\begin{align}
\begin{aligned} 
\sum_{n=1}^N B_n(u,v) &\coloneqq \sum_{n=1}^N\Big[\int_{S_n} \frac{1}{\alpha}\dot{u} v \dif \sigma+ \int_{S^n}\frac{1}{\alpha}\big( uv\Div_{\Gamma}\mathbf{w}
+\mu_d\nabla_{\Gamma}u\cdot \nabla_{\Gamma}v\big)\dif \sigma  \\ & \quad +\big([u]^{n-1},v_+^{n-1}\big)_{\Gamma(t_{n-1})}\Big]=\int_S\frac{1}{\alpha}fv\dif s\dif t \quad \text{for all}~v\in W^b.\label{prob_cont} 
\end{aligned}
\end{align}
Instead of space-time surface integrals of the form $\int_{S^n} \frac{1}{\alpha} g \dif \sigma$ one can also use the double integral $\int_{t_{n-1}}^{t_n} \int_{\Gamma(t)} g \dif s \dif t$. We use the surface variant with $\int_{S^n} \frac{1}{\alpha} g \dif \sigma$ because in the error analysis of the discretization method, that we treat below, we apply partial integration on the space-time surface $S_h^n$, cf. \Cref{PI_Theorem}. 
\begin{remark} \label{remint} \rm
Using partial integration 
\begin{align}
	\begin{aligned} 
	 & \int_{S^n}\frac{1}{\alpha}\dot{u}v\dif \sigma \\ 
	 & =-\int_{S^n}\frac{1}{\alpha}\left(u\dot{v}+uv\Div_{\Gamma}\bw\right)\dif \sigma +\left(u_-^{n},v_-^{n}\right)_{\Gamma(t_{n})}-\left(u_+^{n-1},v_+^{n-1}\right)_{\Gamma(t_{n-1})},\label{pi_cont}
	 \end{aligned}
 \end{align}
for $u,v\in C^1(S^n)$, the formulation in \eqref{prob_cont} can be written in different equivalent forms. The discretization method below will be based on the antisymmetric formulation in which the material derivative appears in the form $\int_{S^n}\frac{1}{2 \alpha}(\dot{u}v - u \dot{v})\dif \sigma$. 
\end{remark}

\subsection{Space-time trace finite element discretization} \label{secmethod}
In this section we introduce a fully discrete higher order space-time discretization of \cref{prob_cont}, similar to \cite{sass2022}. The method is based on the standard space-time tensor product finite element space $V_h^{k_\spa,k_\ti}$, cf. \eqref{st_fe_space}. To obtain a higher order feasible geometry approximation we combine this space with the  parametric mapping \eqref{Theta_Def} as follows. We consider the prisms that are cut by $S_{\lin}$, i.e. prisms in $Q_{h}^S$ (cf. Fig.~\ref{theta_picture}), and apply the mesh deformation to obtain the domains:
\begin{equation*}
Q_{\Theta,n}^{S}\coloneqq \Theta_h^n(Q^S_{n}),~n=1,\dots,N, \quad Q_{\Theta}^{S}\coloneqq\bigcup_{n=1}^N Q_{\Theta,n}^{S}.
\end{equation*}
We define the parametric finite element space
\begin{equation}
V_{h,\Theta}^{k_\spa,k_\ti}\coloneqq \left\lbrace v_h:Q_{\Theta}^{S}\rightarrow \R| (\bx,t)\mapsto v_h(\Theta_h^n(\bx,t))\in V_h^{k_\spa,k_\ti}\restrict{Q^S_{n}}, n=1,\dots,N \right\rbrace. \label{Vh_Theta_Def}
\end{equation}
We use degree $k_\spa$ and $k_\ti$ in space and time, respectively. These degrees may be different from the degrees $k_{g,s}$ and $k_{g,q}$ that are used in the parametric mapping $\Theta_h^n$, cf. \eqref{Theta_Def}. 
 We introduce the following bilinear form on the time slab $S_h^n$:
\begin{align}
\begin{aligned}\label{discreteformendetail}
B_{h,n}(u,v)\coloneqq &\int_{S_h^n}\frac{1}{\alpha_h}\left(\frac12\mathring{u}v-\frac12 u\mathring{v} +\frac12uv\Div_{\Gamma_h}\mathbf{w}+ \mu_d \nabla_{\Gamma_h}u\cdot \nabla_{\Gamma_h}v\right)\dif \sigma_h\\
&\qquad+\frac12\left(R_-^n u_-^n,v_-^n\right)_{\Gamma_h^n(t_n)}+\frac12\left(R_+^{n-1} u_+^{n-1},v_+^{n-1}\right)_{\Gamma_h^n(t_{n-1})}\\
&\qquad-\left( R_+^{n-1} u^{n-1}_-\circ \Theta_h^{n-1}\circ(\Theta_h^{n})^{-1},v_+^{n-1}\right)_{\Gamma_h^n(t_{n-1})}, 
\end{aligned}
\end{align} 
and its sum
\begin{equation}
B_h(u,v)\coloneqq \sum_{n=1}^N B_{h,n}(u,v).\label{bh_beta}
\end{equation}
In addition we  use two stabilization terms. The first one is a variant of the so-called volume normal derivative stabilization \cite{grande2018analysis,burmanembedded}: 
\begin{equation}
s_1(u,v)\coloneqq \xi_1 \int_{Q_{\Theta}^{S}}(\bn_h\cdot \nabla u) (\bn_h\cdot \nabla v)\dif\, (\x, t).\label{NormalStabi}
\end{equation}
With the average $\overline{u}_h^n(t):= \int_{\Gamma_h^n(t)} u \dif s_h  $ we define the second stabilization term
\begin{equation}
  s_2(u,v):= \xi_2 \sum_{n=1}^N \int_{t_{n-1}}^{t_{n}}\overline{u}_h^n \overline{v}_h^n \dif t.\label{PoincStabi}
\end{equation}
Based on the literature we take the parameter range 
\begin{equation}
h\lesssim\xi_1 \lesssim h^{-1}. \label{stabi_parameter_restrictions}
\end{equation}
The parameter range for $\xi_2$ is specified  in \eqref{stabiparameterbedingung} below. We add these stabilizations to the bilinear form:
\begin{equation}
B_h^{\rm stab}(u,v)\coloneqq B_h(u,v)+s_1(u,v)+ s_2(u,v). \label{stabbilform}
\end{equation}
Before defining the fully discrete problem we need a suitable approximation of the right hand side $f$. We assume $f_h\in L^2(S_h)$ to be an approximation of the exact data $f$ satisfying
\begin{equation}
\norm{f_h-\mu_h f^e}_{L^2(S_h)}\lesssim (h^{k_{g,\spa}+1} + \Delta t^{k_{g,\ti}+1})\,\textcolor{black}{\norm{f}_{L^2(S)}}.\label{f_approx}
\end{equation}
From the literature it is known how such a data approximation can be determined, see e.g. \cite[Remark 4.43]{Diss_Sass} or \cite[Remark 5]{grande2018analysis} in similar settings.

We now introduce the \emph{space-time discrete variational problem}:
Given a right-hand side $f_h\in L^2(S_h)$ that satisfies \cref{f_approx}, determine $u_h\in V_{h,\Theta}^{k_\spa,k_\ti}$ such that
\begin{equation}
B_h^{\rm stab}(u_h,v_h)=\int_{S_h} \frac{f_h v_h}{\sqrt{1+V_h^2}}\dif \sigma_h 
\quad \text{for all } v_h\in V_{h,\Theta}^{k_\spa,k_\ti}.\label{discreteproblem}
\end{equation}
In the discrete problem above we used the antisymmetric form of the material derivative, cf. Remark~\ref{remint}. An advantage of this form is that it almost immediately leads to a stability estimate in a reasonable norm, cf. Section~\ref{secStrang}.
An extensive discussion of implementation aspects of this method is given  in \cite[Subsection 3.7]{sass2022}.
Here we only briefly address a few points.  

The use of the function $R$ in the boundary terms in 
\cref{discreteformendetail}  is motivated by the partial integration  formula \cref{PI-unschoen} on $B_h^n$. 
These weights can be included in an implementation without significant additional computational costs. In the consistency error analysis below this weighting with $R$ plays a key role. We are not able  to derive optimal bounds if we replace $R$ by 1 and use the (sharp) estimates in Lemma~\ref{l4}.  On the other hand, so far in numerical experiments we observe optimal order convergence if we replace $R$ by 1. 
	
	The factor $\Theta_h^{n-1}\circ (\Theta_h^n)^{-1}$ that appears in the last term in \eqref{discreteformendetail} is included to (weakly) transfer the solution from  the previous time interval $I_{n-1}$ to the current one $I_n$. We need such a transfer operator because the approximate surface may be \emph{dis}continuous between time slabs, $\Gamma_h^{n}(t_{n-1}) \neq \Gamma_h^{n-1}(t_{n-1})$, cf. Remark~\ref{remconsist}.

	The volume normal derivative stabilization $s_1(\cdot,\cdot)$ is a standard technique in trace or cut finite element methods that is used to control the effects of small cuts. The other stabilization term $s_2(\cdot,\cdot)$ is motivated by a Poincar\'e type inequality. For this we recall that for $t \in [0,T]$ there are constants $c_{P,1}(t) >0$, $c_{P,2}(t)$ such that for all $u \in H^1(\Gamma(t))$ the    inequality $\|\nabla_\Gamma u \|_{L^2(\Gamma(t))}^2 \geq c_{P,1}(t) \|u\|_{L^2(\Gamma(t))}^2 - c_{P,2}(t) \big(\int_{\Gamma(t)} u \dif s\big)^2$ holds. Using smoothness assumptions (to have uniformity in $t$ of the constants) and suitable perturbation arguments  (cf. \cite[Theorem 5.16]{Diss_Sass}) one obtains, for sufficiently smooth functions $u$ on $S_h^n$, $n=1,\ldots,N$,
	\begin{equation}
		\norm{\nabla_{\Gamma_h}u}_{L^2(S_h^n)}^2\geq C_{P,1}\norm{u}_{L^2(S_h^n)}^2-  C_{P,2}\int_{t_{n-1}}^{t_n}\big(\overline{u}_h^n\big)^2 \dif t,\label{PCdisco}
	\end{equation}
	with strictly positive and uniform (in the discretization parameters) constants $C_{P,1}$, $C_{P,2}$. In  the analysis we have to control the second term on the right-hand side in \eqref{PCdisco}. For that we need the stabilization term $s_2(\cdot,\cdot)$. 

\section{Discretization error analysis}\label{consistency}
In this section we present  an error analysis for the space-time discretization method \eqref{discreteproblem}.
Optimal error bounds for the case of bi-linear finite element approximations are presented in \cite{Diss_Sass}. The analysis in this section applies also to the case of higher order (iso)parametric finite element spaces.
It is well-known that in the analysis of finite element discretization methods on stationary or evolving surfaces the treatment of the geometric error, i.e., the error caused by the numerical approximation of the surface, is a key point. Typically the analysis of the corresponding consistency error is rather technical. In the setting of the space-time finite element method \eqref{discreteproblem} one has to control the difference between quantities  on $S$ and the corresponding ones on $S_h$.  For this the results derived in the Sections~\ref{section_basic_geometry_est} and \ref{secGeometry} are essential ingredients.

We start with assumptions that simplify both the analysis and the presentation. We restrict to the \emph{iso}parametric case, in the sense that the polynomial degrees used in the geometry approximation (parametric mapping) are the same as the ones used in the finite element spaces, i.e., $k_{g,s}=k_s$ and $k_{g,q}=k_{q}$. To further simplify the presentation we take $k_s=k_q$. We avoid space-time grid anisotropies by assuming
\[ \Delta t \sim h.
\]
In the remainder of this section, as discretization parameters we use the mesh size $h$ ($\sim \Delta t$) and the polynomial degree $k_s$ ($=k_{g,s}=k_{g,q}=k_{q}$). For the isoparametric finite element space we use the simplified notation $ V_{h,\Theta}^{k_\spa}=V_{h,\Theta}^{k_\spa, k_\ti}$. 

In the subsections below we treat different components of the error analysis. These are the usual ones for an inconsistent finite element method, namely a stability estimate, consistency error estimates and interpolation error estimates. In Section~\ref{secStrang} we derive a stability result and formulate a Strang lemma.
The stability analysis is fairly straightforward, 
because the bilinear form in the discretization \eqref{discreteproblem} is antisymmetric with respect to the material derivative, which almost immediately leads to a stable method.    In Section~\ref{secconsistency} we analyze consistency errors. In that section many results presented in the Sections~\ref{section_basic_geometry_est}-\ref{secGeometry} will be used. Finally in  Section~\ref{secinterpol} we briefly address interpolation error estimates and derive the main result Theorem~\ref{mainthm}, in which an optimal discretization error bound for the method \eqref{discreteproblem} is presented.

In the error analysis we use the natural norm
 \begin{align}
 	\begin{aligned}\label{energy_norm_ho_def}
 		\triplenorm u \triplenorm_{h,\Theta}^2\coloneqq & \norm{u}^2_{L^2(S_h)}+ \norm{\nabla_{\Gamma_h}u}_{L^2(S_h)}^2 
 		+ \xi_1\norm{\n_h\cdot \nabla u}_{L^2(Q^{S}_\Theta)}^2 \\
 		&  +\norm{u_-^N}^2_{L^2(\Gamma_{h}^N(t_N))}+ {\sum_{n=1}^N}\norm{[u]_h^{n-1}}^2_{L^2(\Gamma_{h}^n(t_{n-1}))}.
 	\end{aligned}
\end{align}

\subsection{Stability estimate and Strang lemma} \label{secStrang}
We derive an ellipticity estimate for the bilinear form $B_h^{\stab}$ in the $ \triplenorm\cdot \triplenorm_{h,\Theta}$-norm. Note that this norm does not contain time derivatives. When considering $B_h^{\stab}(u,u)$, $u\in V_{h,\Theta}^{k_s}$, the terms with the time derivative vanish. Hence, in the stability analysis we do not need partial integration and we do not need bounds for the (geometric) error terms that appear in the formula \eqref{PI-unschoen}.  However, we do have to control the geometric errors in the time slab boundary terms in \eqref{discreteformendetail}. 
 
We start with an assumption that we need in the stability analysis.
\begin{assumption}\label[assumption]{ass_stability}
	We assume that there exists a constant $c_{\Div}>0$ such that
	\begin{equation}
		\frac{\div_{\Gamma_h}\mathbf{w}}{\alpha_h}+c_\alpha\mu_d C_{P,1}\geq c_{\div}\quad \text{a.e. on } S_h ,\label{divforderungdiscrete}
	\end{equation}
	where $C_{P,1}$ is the constant of the Poincar\'{e} inequality \cref{PCdisco} and $c_\alpha\coloneqq \norm{\alpha_h}^{-1}_{L^\infty(S_h)}$. 
\end{assumption}

Note that for convection-diffusion problems assumptions of this type are often used in the analysis of finite element methods. 
	For  the stabilization parameter  $\xi_2$ used in $s_2(\cdot,\cdot)$ we assume
	\begin{equation}
	\frac{\mu_d C_{P,2}c_\alpha}{2}	 \leq \xi_2 \lesssim h^{-1},\label{stabiparameterbedingung}
	\end{equation}
	with $C_{P,2}$ from the Poincar\'{e} inequality \cref{PCdisco}.
 We now introduce an estimate that is crucial in the stability analysis of trace and cut finite element methods. This estimate essentially bounds the $L^2$-norm of finite element functions on a small strip around the discrete surface by the sum of the $L^2$-norm on the surface and the $L^2$-norm of the normal derivative in the strip.  For stationary surfaces such estimates are known in the literature, also for higher order finite element spaces, e.g. \cite{grande2018analysis}. For the discrete space-time surface that we consider in this paper and the case $k_s=1$  (bi-linear finite elements) a proof is given in \cite{Diss_Sass}.   We think that the arguments used in this literature can be extended to the  higher order space-time case. A rigorous proof, however, requires a detailed technical analysis that we do not present here. Without proof we introduce the following claim. 
\begin{proposition}\label[assumption]{ass_ho_tech}
	For $v_h\in V_{h,\Theta}^{k_s}$ the following uniform estimate holds:
	\begin{equation}
		\norm{v_h}^2_{L^2(Q^{S}_{\Theta})}\lesssim h \left(\norm{v_h}^2_{L^2(S_h)}+s_1(v_h,v_h)\right).\label{vh_tech}
	\end{equation}
\end{proposition}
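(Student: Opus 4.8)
The plan is to reduce the claim, via the parametric mapping, to a purely polynomial strip estimate on the undeformed cut region and then to establish that estimate by integrating along spatial normal directions. First I would pull everything back to the reference configuration through $\Theta_h^n$. By \Cref{l1}, the bounds \eqref{theta_to_id} and \eqref{Dtheta_psi_est} show that the spatial Jacobian $D\Theta_{h,\spa}^n$ is uniformly close to the identity (for $h$, $\Delta t$ small), so the change of variables $(\bx,t)\mapsto \Theta_h^n(\bx,t)$ has a Jacobian determinant bounded above and below by constants independent of the cut configuration. Writing $v:=v_h\circ \Theta_h^n$, which is a piecewise polynomial on $Q_n^S=\Omega_n^\Gamma\times I_n$, this transforms $\norm{v_h}_{L^2(Q_\Theta^S)}^2$, $\norm{v_h}_{L^2(S_h)}^2$ and the normal-derivative term $s_1(v_h,v_h)$ into equivalent quantities on the undeformed region, with $\bn_h$ replaced (up to the same controlled distortion, cf. \eqref{nh_defs}) by the piecewise constant normal $\bn_{\lin}$ and $S_h^n$ replaced by the piecewise planar surface $S_{\lin}^n$. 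It therefore suffices to prove, on each space-time prism $P=K\times I_n$ with $K\in\T_n^\Gamma$,
\begin{equation*}
 \norm{v}_{L^2(P)}^2\lesssim h\,\norm{v}_{L^2(S_{\lin}^n\cap P)}^2+h^2\,\norm{\bn_{\lin}\cdot\nabla v}_{L^2(P)}^2
\end{equation*}
for polynomials $v$, with constants independent of how $S_{\lin}^n$ cuts $P$; summing over the cut prisms, reverting the change of variables, and using $\xi_1\gtrsim h$ from \eqref{stabi_parameter_restrictions} to absorb the term $h^2\,\norm{\bn_h\cdot\nabla v_h}_{L^2(Q_\Theta^S)}^2$ into $h\,s_1(v_h,v_h)$ then gives the claim.

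For the local estimate I would fix $t\in I_n$ and work in the spatial slice. Where $\Gamma_{\lin}(t)$ cuts $K$, one parametrizes $K$ by coordinates adapted to the (within the slice constant) normal direction $\bn_{\lin}$: for $\bx\in K$ with footpoint $\bx_0\in\Gamma_{\lin}(t)$ along $\bn_{\lin}$ at signed distance $\rho$, the fundamental theorem of calculus gives $v(\bx,t)=v(\bx_0,t)+\int_0^{\rho}(\bn_{\lin}\cdot\nabla v)(\bx_0+r\bn_{\lin},t)\,\rd r$ with $|\rho|\lesssim h$. Squaring, using Cauchy--Schwarz on the integral, integrating in $\bx$ across the spatial strip (whose normal width is $O(h)$) and applying the coarea formula to convert the base integral into one over $\Gamma_{\lin}(t)$ yields the slicewise version of the displayed bound; the powers of $h$ arise from the strip width and the coarea Jacobian, which remain robust under small cuts. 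Integrating in $t$ over $I_n$ would then produce the space-time estimate.

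The main obstacle is that $K\in\T_n^\Gamma$ only guarantees that $\Gamma_{\lin}(t')$ cuts $K$ for some $t'\in I_n$, not for all $t\in I_n$; at times $t$ where $\Gamma_h^n(t)\cap K=\emptyset$ the slicewise argument has no surface to anchor the fundamental theorem of calculus, and spatial normal lines from points of $K$ need not meet $S_{\lin}^n\cap P$. Overcoming this requires a genuinely space-time propagation argument. The plan is to exploit that the planar slice moves with bounded normal velocity --- $\partial_t\hat\phi_h$ is controlled, so $V_{\lin}$ is bounded and the plane sweeps a spatial distance $O(\Delta t)=O(h)$ across the slab --- so that every point of the uncut slices can be connected within $P$ to a point of $S_{\lin}^n\cap P$ by a short path along which the integrand of $s_1$ controls the variation of $v$, at the cost of a controlled (velocity-dependent) tilting between $\bn_{\lin}$ and the space-time conormal of $S_{\lin}^n$. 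The delicate technical point, and the reason a fully rigorous proof is postponed here, is to carry out this propagation with all constants independent of the (possibly degenerate, non--shape-regular) cut geometry; for $k_s=1$ this is done in \cite{Diss_Sass}, and the expectation is that the higher order polynomial case follows by combining that construction with finite element inverse estimates on $P$.
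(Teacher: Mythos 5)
The paper does not prove this statement at all: it is introduced explicitly ``without proof'' as a claim (note that \Cref{ass_ho_tech}, despite being typeset as a Proposition, is cross-referenced as an \emph{assumption}), justified only by pointing to the literature --- the stationary higher-order case in \cite{grande2018analysis} and the space-time case with $k_s=1$ in \cite{Diss_Sass} --- together with the authors' remark that they ``think'' those arguments extend to the higher-order space-time setting but that a rigorous proof ``requires a detailed technical analysis that we do not present here.'' So there is no paper proof against which your attempt can be checked.

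Measured on its own terms, your proposal is a sensible reconstruction of the argument the authors presumably have in mind, and it is honest about its status. The reduction steps are sound: pulling back through $\Theta_h^n$ is legitimate because $D\Theta_h^n$ is uniformly close to the identity (via \eqref{Dtheta_psi_est} together with closeness of $D\Psi$ to the identity), and the absorption $h^2\norm{\bn_h\cdot\nabla v_h}^2_{L^2(Q_\Theta^S)}\leq h\,s_1(v_h,v_h)$ using $\xi_1\gtrsim h$ from \eqref{stabi_parameter_restrictions} is exactly right. Most importantly, you put your finger on the genuine obstruction: $K\in\T_n^\Gamma$ only guarantees a cut at \emph{some} $t'\in I_n$, so the slicewise normal-integration argument has nothing to anchor to at other times; any propagation-in-time repair must handle connecting paths that may leave the prism $P$ (entering neighboring prisms whose membership in $Q_h^S$ is not automatic) and must be uniform over degenerate, non-shape-regular cut configurations. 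That is precisely the ``detailed technical analysis'' the authors decline to carry out. But you then defer this step yourself (to \cite{Diss_Sass} plus inverse estimates), so what you have is a proof outline carrying the same open gap as the paper, not a complete proof. If you were to actually execute the space-time propagation argument with uniform constants, you would be supplying something the paper itself lacks, not merely reproving it.
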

The estimate \cref{vh_tech} can be used to show that \cref{energy_norm_ho_def}  defines a norm.
Using standard finite element trace estimates and \cref{vh_tech}  one obtains
		\begin{align}
		h\sum_{F\in\mathcal{F}_I} \norm{v_h}^2_{L^2(F)}&\lesssim\triplenorm v_h\triplenorm_{h,\Theta}^2,\label{CustomInvIneq2}\\
		h\sum_{n=1}^N \norm{v_{h,-}^n}^2_{L^2(\Gamma_h^n(t_n))}& 
		\lesssim\triplenorm v_h\triplenorm_{h,\Theta}^2,\label{CustomInvIneq_top}\\
		h\sum_{n=1}^N\norm{v_{h,+}^{n-1}}^2_{L^2(\Gamma_h^n(t_{n-1})} & 
		\lesssim\triplenorm v_h\triplenorm_{h,\Theta}^2\label{CustomInvIneq_bot}.
	\end{align}

\begin{theorem}\label{theo_stabi}
 There exists  $c_s> 0$ such that for sufficiently small $h >0$ the following ellipticity estimate holds:
	\begin{equation}
		\inf_{u\in V_{h,\Theta}^{k_s}} \frac{B_h^{\stab}(u,u)}{ \triplenorm u \triplenorm_{h,\Theta}^2 }\geq c_s.\label{infsupzz}
	\end{equation}
\end{theorem}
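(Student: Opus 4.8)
The plan is to test the bilinear form with $u=v$ and exploit the antisymmetric form of the material derivative. For $u\in V_{h,\Theta}^{k_s}$ the terms $\tfrac12\mathring u u-\tfrac12 u\mathring u$ in \eqref{discreteformendetail} cancel, so $B_h^{\stab}(u,u)$ reduces to the diffusion term $\int_{S_h}\tfrac{\mu_d}{\alpha_h}\abs{\nabla_{\Gamma_h}u}^2\dif\sigma_h$, the reaction term $\tfrac12\int_{S_h}\tfrac{1}{\alpha_h}u^2\Div_{\Gamma_h}\bw\dif\sigma_h$, the time-slab boundary terms, and the two stabilizations. The stabilization $s_1$ in \eqref{NormalStabi} contributes exactly the third term $\xi_1\norm{\n_h\cdot\nabla u}_{L^2(Q^S_\Theta)}^2$ of the norm \eqref{energy_norm_ho_def} and is kept aside. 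First I would treat the volume terms: since $\alpha_h^{-1}\geq c_\alpha$, I split the diffusion into two halves, apply the slabwise Poincar\'e inequality \eqref{PCdisco} to one half, and combine the resulting $\tfrac12 c_\alpha\mu_d C_{P,1}\norm{u}^2$ with the reaction term. Assumption~\ref{ass_stability} then yields $\tfrac12\int_{S_h}\big(\tfrac{\Div_{\Gamma_h}\bw}{\alpha_h}+c_\alpha\mu_d C_{P,1}\big)u^2\dif\sigma_h\geq\tfrac12 c_{\div}\norm{u}_{L^2(S_h)}^2$, while the negative mean-value defect $-\tfrac12 c_\alpha\mu_d C_{P,2}\sum_n\int_{t_{n-1}}^{t_n}(\overline u_h^n)^2\dif t$ produced by Poincar\'e is dominated by $s_2$ precisely because of the lower bound $\xi_2\geq\tfrac12\mu_d C_{P,2}c_\alpha$ in \eqref{stabiparameterbedingung}. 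The remaining half of the diffusion gives $\tfrac12 c_\alpha\mu_d\norm{\nabla_{\Gamma_h}u}^2_{L^2(S_h)}$, so the volume contribution is bounded below by a fixed positive multiple of $\norm{u}^2_{L^2(S_h)}+\norm{\nabla_{\Gamma_h}u}^2_{L^2(S_h)}$.

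The second step is the telescoping of the boundary terms. On each slab I complete the square in the two contributions at $t_{n-1}$: writing $a=u_+^{n-1}$ and $b=u_-^{n-1}\circ\Theta_h^{n-1}\circ(\Theta_h^n)^{-1}$, so that $a-b=[u]_h^{n-1}$ by \eqref{discrete_jumps}, gives $\tfrac12 R_+^{n-1}a^2-R_+^{n-1}ba=\tfrac12 R_+^{n-1}([u]_h^{n-1})^2-\tfrac12 R_+^{n-1}b^2$. Summing over $n$, the squares $\tfrac12\int R_-^n(u_-^n)^2$ from the top of slab $n$ and the terms $-\tfrac12\int R_+^n b^2$ from the bottom of slab $n+1$ are paired across each interior node $t_n$; after a change of variables via $\Theta_h^n\circ(\Theta_h^{n+1})^{-1}$ (needed because $\Gamma_h^n(t_n)\neq\Gamma_h^{n+1}(t_n)$, cf. Remark~\ref{remconsist}) they cancel up to a geometric defect $E_n$ with $\abs{E_n}\lesssim(h^{k_{g,\spa}}+\Delta t^{k_{g,\ti}})\norm{u_-^n}^2_{L^2(\Gamma_h^n(t_n))}$. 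What survives is the final-time term $\tfrac12\int_{\Gamma_h^N(t_N)}R_-^N(u_-^N)^2\dif s_h$ and the jump terms $\tfrac12\sum_n\int_{\Gamma_h^n(t_{n-1})}R_+^{n-1}([u]_h^{n-1})^2\dif s_h$; since Lemma~\ref{l4} gives $R\geq\tfrac12$ for $h$ small, these are bounded below by a fixed positive multiple of $\norm{u_-^N}^2_{L^2(\Gamma_h^N(t_N))}+\sum_n\norm{[u]_h^{n-1}}^2_{L^2(\Gamma_h^n(t_{n-1}))}$, i.e. the two temporal terms of \eqref{energy_norm_ho_def}.

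Collecting the two steps shows $B_h^{\stab}(u,u)\geq c\,\triplenorm u\triplenorm_{h,\Theta}^2-\sum_n\abs{E_n}$ with $c>0$, and it remains to absorb the defect. The defect factor is of order $h^{k_{g,\spa}}+\Delta t^{k_{g,\ti}}$, which is controlled by combining the accuracy of $R$ from Lemma~\ref{l4} with the high-order closeness $\norm{\Theta_h^n-\Theta_h^{n+1}}_{W^{1,\infty}}\lesssim h^{k_{g,\spa}}+\Delta t^{k_{g,\ti}}$ following from \eqref{theta_psi_est}--\eqref{Dtheta_psi_est}; the inverse-type estimate \eqref{CustomInvIneq_top} then gives $\sum_n\abs{E_n}\lesssim(h^{k_{g,\spa}}+\Delta t^{k_{g,\ti}})h^{-1}\triplenorm u\triplenorm_{h,\Theta}^2$, which is $o(1)\,\triplenorm u\triplenorm_{h,\Theta}^2$ for $h\sim\Delta t$ small and can be absorbed into the left-hand side. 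Choosing $h$ sufficiently small yields \eqref{infsupzz} with $c_s=c/2$.

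I expect the genuine obstacle to be exactly this inter-slab defect: because the deformed surface is discontinuous between time slabs and because the conormal weight $R$ is not identically $1$, the top and bottom boundary squares do not cancel exactly, and one must show that their residual is a true higher-order geometric perturbation rather than an $O(1)$ term. This is where the specific $R$-weighting of the boundary terms in \eqref{discreteformendetail} and the sharp estimates of Lemma~\ref{l4} (together with Lemma~\ref{l3}) are essential; replacing $R$ by $1$ would, within these estimates, destroy the needed cancellation, and the lowest-order case $k_s=1$ requires the more delicate treatment of \cite{Diss_Sass}. The remaining ingredients — completion of squares, the Poincar\'e inequality, and Assumption~\ref{ass_stability} — are routine once this cancellation is under control.
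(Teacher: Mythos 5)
Your proposal is correct and takes essentially the same route as the paper's proof: the same splitting of $B_h^{\stab}(u,u)$ into volume terms (handled via the Poincar\'e inequality \eqref{PCdisco}, Assumption~\ref{ass_stability} and the lower bound on $\xi_2$ in \eqref{stabiparameterbedingung}), time-slab boundary terms (completion of squares producing the jumps $[u]_h^{n-1}$, telescoping across nodes with a change of variables via $\Theta_h^{n-1}\circ(\Theta_h^n)^{-1}$ and a Jacobian estimate from \eqref{Dtheta_psi_est}), and the stabilization $s_1$, followed by absorbing the resulting $\mathcal{O}\big((h^{k_{g,\spa}}+\Delta t^{k_{g,\ti}})h^{-1}\big)$ geometric defect for $h$ small via \eqref{CustomInvIneq_top}--\eqref{CustomInvIneq_bot}. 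The only cosmetic difference is that you keep the weight $R$ inside the completed squares (using $R\geq\tfrac12$ from Lemma~\ref{l4}), whereas the paper first replaces $R$ by $1$ and bounds that perturbation separately; like the paper, you correctly defer the case $k_s=1$, where the defect is only $\mathcal{O}(1)$, to the more delicate treatment in the cited reference.
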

\begin{proof}
	For $k_s=1$ a proof is given in \cite[Theorem 5.19]{Diss_Sass}. We assume $k_s>1$. We note that for $k_s>1$ the proof is simpler than for $k_s=1$ because we can benefit from smaller perturbation terms $\|R-1\|_{L^\infty(S_h)} \lesssim h^{k_s}$, cf. Lemma~\ref{l4}. 
	 We take an arbitrary $u\in V_{h,\Theta}^{k_s}, u\neq 0$.   By definition we have
	\begin{align}
			B_h^{\stab}(u,{u}) &= \underbrace{ \sum_{n=1}^N \int_{S_h^n}\frac{1}{2\alpha_h}\left(u^2\div_{\Gamma_h}\mathbf{w}
			+2\mu_d \nabla_{\Gamma_h}u\cdot \nabla_{\Gamma_h}u\right)\dif \sigma_h + s_2(u,u)}_{F_1} \nonumber\\
			& +
			\sum_{n=1}^N\bigg[ \frac12\left(R_-^n u_-^n,u_-^n\right)_{\Gamma_h^n(t_n)}+\frac12\left(R_+^{n-1} u_+^{n-1},u_+^{n-1}\right)_{\Gamma_h^n(t_{n-1})} \label{first_step_stabi} \\
			& \qquad -\left( R_+^{n-1} u^{n-1}_-\circ \Theta_h^{n-1}\circ(\Theta_h^{n})^{-1},u_+^{n-1}\right)_{\Gamma_h^n(t_{n-1})}\bigg] +  s_1(u,u) \nonumber \\
			& =: F_1+ F_2 + s_1(u,u). \nonumber
	\end{align}
	For the term $F_1$ we use the Poincar\'{e} inequality \cref{PCdisco} and the choice of $\xi_2$  
	\eqref{stabiparameterbedingung} in the stabilization 
	term $s_2(u,u)$:
	\begin{align*}
		\begin{aligned}
			&\sum_{n=1}^N\bigg[\int_{S_h^n}\frac{1}{2\alpha_h}\left(u^2\div_{\Gamma_h}\mathbf{w}+ 2\mu_d \nabla_{\Gamma_h}u\cdot \nabla_{\Gamma_h}u\right)\dif \sigma_h \bigg]+ s_2(u,u)\\
			&\qquad \geq \sum_{n=1}^N \bigg[ \frac{c_\alpha\mu_d}{2}\norm{\nabla_{\Gamma_h}u}_{L^2(S_h^n)}^2+ \int_{S_h^n} \frac12 \big(\frac{1}{\alpha_h}\div_{\Gamma_h}\mathbf{w}+ c_\alpha\mu_d C_{P,1}\big) u^2 \dif \sigma_h \bigg]
	  \end{aligned}
	  \end{align*}
	  Using the assumption \eqref{divforderungdiscrete} we then obtain
	   \begin{equation}
	    \label{ell2.teil}
	  F_1 \geq \tfrac12
		\min\{\mu_dc_\alpha, c_{\div}\} \big( \norm{u}^2_{L^2(S_h)}+\norm{\nabla_{\Gamma_h}u}^2_{L^2(S_h)}\big).
	\end{equation}
	We now treat the term $F_2$. We use the notation $ T_\Theta^{n-1}:=\Theta_h^{n-1}\circ(\Theta_h^{n})^{-1} : \Gamma_h^n(t_{n-1}) \to \Gamma_h^{n-1}(t_{n-1})$. Let  $J_\Theta^{n-1} :=\det ( DT_\Theta^{n-1})$ denote the corresponding Jacobian determinant and
	\[
	 \tilde F_2:=\sum_{n=1}^N\bigg[ \frac12\norm{u_-^n}^2_{L^2(\Gamma_h^n(t_n))} + \frac12\norm{u_+^{n-1}}^2_{L^2(\Gamma_h^n(t_{n-1}))}-  \left( u^{n-1}_-\circ T_\Theta^{n-1} ,u_+^{n-1}\right)_{\Gamma_h^n(t_{n-1})}\bigg]
	\]
the term $F_2$ with all $R$-terms replaced by 1. Using $\|R-1\|_{L^\infty(S_h)} \lesssim h^{2}$ and \eqref{CustomInvIneq_top}-\eqref{CustomInvIneq_bot} we get  
\begin{equation} \label{F2}
 |F_2 -\tilde F_2 | \lesssim  h \triplenorm u \triplenorm_{h,\Theta}^2.
 \end{equation}
We further consider $\tilde F_2$. Recall the jump term \eqref{discrete_jumps} across $\Gamma_h(t_{n-1})$:  $[u]_h^{n-1}=u_+^{n-1}-u_-^{n-1} \circ T_\Theta^{n-1}$. Hence,
\begin{equation} \label{po} \begin{split}
  & -  \left( u^{n-1}_-\circ T_\Theta^{n-1} ,u_+^{n-1}\right)_{\Gamma_h^n(t_{n-1})}   =  - \frac12 \|u^{n-1}_-\circ T_\Theta^{n-1}\|_{L^2(\Gamma_h^n(t_{n-1}))}^2  \\ &\qquad \qquad\qquad \qquad - \frac12 \|u_+^{n-1}\|_{L^2(\Gamma_h^n(t_{n-1}))}^2 
 +  \frac12 \|[u]_h^{n-1}\|_{L^2(\Gamma_h^n(t_{n-1}))}^2.
\end{split} \end{equation}
We now relate $\|u^{n-1}_-\circ T_\Theta^{n-1}\|_{L^2(\Gamma_h^n(t_{n-1}))}^2$ to $\|u^{n-1}_-\|_{L^2(\Gamma_h^{n-1}(t_{n-1}))}^2$:
\[																																
 \|u^{n-1}_-\|_{L^2(\Gamma_h^{n-1}(t_{n-1}))}^2 = \int_{ \Gamma_h^{n-1}(t_{n-1}))} \big(u^{n-1}_-\big)^2 \, ds = 
 \int_{\Gamma_h^n(t_{n-1}))} \big(u^{n-1}_- \circ T_\Theta^{n-1} \big)^2  |J_\Theta^{n-1}|\, ds.
 \]
Using \cref{Dtheta_psi_est} we get
\begin{align*}
	&\max_{K\in\Tau_n^\Gamma}\norm{D\Theta_h^n(\cdot, t_n)D\Theta_h^{n+1}(\cdot, t_n)^{-1}- I}_{L^\infty(K)}\\
	\lesssim &\max_{K\in\Tau_n^\Gamma}\norm{D\Theta_h^n(\cdot, t_n)-D\Theta_h^{n+1}(\cdot, t_n)}_{L^\infty(K)}\\
	=&\max_{K\in\Tau_n^\Gamma}\norm{(D\Theta_h^n(\cdot, t_n)-D\Psi(\cdot, t_n)) + (D\Psi(\cdot, t_n)-D\Theta_h^{n+1}(\cdot, t_n)}_{L^\infty(K)} \lesssim h^{k_s} \lesssim
	 h^2,
\end{align*}
and thus $|J_\Theta^{n-1}-1| \lesssim  h^2$. 
This yields (for $h$ small enough) 
\[  \left| \|u^{n-1}_-\circ T_\Theta^{n-1}\|_{L^2(\Gamma_h^n(t_{n-1}))}^2 - \|u^{n-1}_-\|_{L^2(\Gamma_h^{n-1}(t_{n-1}))}^2\right| \lesssim h^2 \|u^{n-1}_-\|_{L^2(\Gamma_h^{n-1}(t_{n-1}))}^2.
\]
 We use this in \eqref{po} and substitute in the expression for $\tilde F_2$. This yields
\[
 \tilde F_2 \geq \frac12 \|u^{N}_-\|_{L^2(\Gamma_h^{N}(t_{N}))}^2 +  \frac12 \sum_{n=1}^N  \|[u]_h^{n-1}\|_{L^2(\Gamma_h^n(t_{n-1}))}^2 -  c h^2 \sum_{n=1}^N\|u^{n-1}_-\|_{L^2(\Gamma_h^{n-1}(t_{n-1}))}^2,
\]
with a suitable $c>0$.
Combining this with \eqref{CustomInvIneq_top} and \eqref{F2} we get  
\begin{equation} \label{PO}
 F_2 \geq \frac12 \|u^{N}_-\|_{L^2(\Gamma_h^{N}(t_{N}))}^2 +  \frac12 \sum_{n=1}^N  \|[u]_h^{n-1}\|_{L^2(\Gamma_h^n(t_{n-1}))}^2 - c  h \triplenorm u \triplenorm_{h,\Theta}^2.
\end{equation}
Combining the estimates for $F_1$ and $F_2$ and noting that $s_1(u,u)$ forms part of $\triplenorm u \triplenorm_{h,\Theta}^2$ we obtain
\[
 B_h^{\rm stab} (u,u) \geq \tfrac12 \min\{\mu_dc_\alpha, c_{\div},1\} \triplenorm u \triplenorm_{h,\Theta}^2 - c h\triplenorm u \triplenorm_{h,\Theta}^2.
\]
Hence, for $h >0$ sufficiently small we obtain the ellipticity estimate \eqref{infsupzz}.
\end{proof}

For the formulation of the Strang lemma we use  an interpolation operator in the volume space-time finite element space  denoted  by ${I_\Theta^{k_\spa}:C^0(Q_\Theta^S)\rightarrow V_{h,\Theta}^{k_\spa}}$.   
\begin{lemma} \label{LemmaStrang}
	Let $u\in H^2(S)$ be the solution of \cref{prob_cont} and $u_h$ the solution of \Cref{discreteproblem}. The following estimate holds.
	\begin{align}
		\begin{aligned}
			\triplenorm u^e-u_h \triplenorm_{h,\Theta}&\leq \triplenorm u^e-I_\Theta^{k_\spa}(u^e)\triplenorm_{h,\Theta} + c \sup_{v_h\in V_{h,\Theta}^{k_\spa}} \frac{B_h^{\stab}(u^e-I_\Theta^{k_\spa}(u^e),v_h)}{\triplenorm v_h \triplenorm_{h,\Theta}}  \\
			&\qquad+   c \sup_{v_h\in  V_{h,\Theta}^{k_\spa}} \frac{B_h^{\stab}(u^e-u_h,v_h)}{\triplenorm v_h \triplenorm_{h,\Theta}}.\label{strangFormula}
		\end{aligned}
	\end{align}
\end{lemma}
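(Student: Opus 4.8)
The plan is to run the standard second Strang lemma argument, whose only two ingredients are the ellipticity estimate \eqref{infsupzz} of Theorem~\ref{theo_stabi} and the bilinearity of $B_h^{\stab}$. The essential structural point is that the method \eqref{discreteproblem} is \emph{inconsistent}: the extended exact solution $u^e$ does not satisfy the discrete equation, so a consistency term must be kept in the bound; this term is precisely the third supremum in \eqref{strangFormula}, and it is the object of the separate (and genuinely technical) consistency analysis in Section~\ref{secconsistency}. First I would introduce the discrete error $e_h := I_\Theta^{k_\spa}(u^e) - u_h \in V_{h,\Theta}^{k_\spa}$ and split off the interpolation error by the triangle inequality
\[
 \triplenorm u^e - u_h \triplenorm_{h,\Theta} \le \triplenorm u^e - I_\Theta^{k_\spa}(u^e) \triplenorm_{h,\Theta} + \triplenorm e_h \triplenorm_{h,\Theta},
\]
so that it remains to bound $\triplenorm e_h \triplenorm_{h,\Theta}$.

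Since $e_h$ lies in the discrete space $V_{h,\Theta}^{k_\spa}$, I would apply the ellipticity estimate \eqref{infsupzz} (which requires $h$ small, as already assumed): for $e_h \neq 0$,
\[
 c_s \triplenorm e_h \triplenorm_{h,\Theta} \le \frac{B_h^{\stab}(e_h,e_h)}{\triplenorm e_h \triplenorm_{h,\Theta}} \le \sup_{v_h \in V_{h,\Theta}^{k_\spa}} \frac{B_h^{\stab}(e_h,v_h)}{\triplenorm v_h \triplenorm_{h,\Theta}},
\]
while for $e_h=0$ there is nothing to prove. Using linearity of $B_h^{\stab}$ in its first slot and the decomposition $e_h = (u^e - u_h) - (u^e - I_\Theta^{k_\spa}(u^e))$, one has for each fixed $v_h$
\[
 B_h^{\stab}(e_h,v_h) = B_h^{\stab}(u^e - u_h, v_h) + B_h^{\stab}\big(u^e - I_\Theta^{k_\spa}(u^e), -v_h\big).
\]
Because $-v_h$ ranges over $V_{h,\Theta}^{k_\spa}$ together with $v_h$ and $\triplenorm -v_h \triplenorm_{h,\Theta} = \triplenorm v_h \triplenorm_{h,\Theta}$, passing to the supremum over $v_h$ is insensitive to the sign, and dividing by $\triplenorm v_h \triplenorm_{h,\Theta}$ yields
\[
 c_s \triplenorm e_h \triplenorm_{h,\Theta} \le \sup_{v_h} \frac{B_h^{\stab}(u^e - I_\Theta^{k_\spa}(u^e), v_h)}{\triplenorm v_h \triplenorm_{h,\Theta}} + \sup_{v_h} \frac{B_h^{\stab}(u^e - u_h, v_h)}{\triplenorm v_h \triplenorm_{h,\Theta}}.
\]
Combining this with the triangle inequality and setting $c := c_s^{-1}$ gives exactly \eqref{strangFormula}.

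The only point that needs a word of justification is that $B_h^{\stab}(\cdot,\cdot)$ is well defined on the non-discrete arguments $u^e - I_\Theta^{k_\spa}(u^e)$ and $u^e - u_h$; this holds because $u \in H^2(S)$ guarantees that its constant normal extension $u^e$ is regular enough in the space-time neighborhood for all the surface, gradient, volume-normal-derivative, and time-slab trace terms in \eqref{discreteformendetail}--\eqref{stabbilform} to be finite. I do not expect any real obstacle in this lemma: the entire content is the coercivity \eqref{infsupzz} plus bilinearity, and the hard analytic work is pushed into the later bound for the retained consistency term $\sup_{v_h} B_h^{\stab}(u^e - u_h, v_h)/\triplenorm v_h \triplenorm_{h,\Theta}$, which is nonzero precisely because of the geometric ($S_h \neq S$) and data approximation errors.
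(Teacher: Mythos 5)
Your proof is correct and follows essentially the same route as the paper: triangle inequality to split off the interpolation error, the ellipticity estimate \eqref{infsupzz} applied to the discrete difference $I_\Theta^{k_\spa}(u^e)-u_h$, and bilinearity to decompose into the interpolation and consistency suprema. The extra details you supply (sign handling via $-v_h$, well-definedness of $B_h^{\stab}$ on $u^e$) are consistent with, and slightly more explicit than, the paper's brief argument.
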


\begin{proof}
	This follows using a standard argument. By the triangle inequality and the ellipticity estimate \cref{infsupzz} we have
	\begin{align*}
		\triplenorm u^e-u_h \triplenorm_{h,\Theta}
		& \leq  \triplenorm u^e-I_\Theta^{k_\spa}(u^e)\triplenorm_{h,\Theta} + c \sup_{v_h\in V_{h,\Theta}^{k_\spa}}\frac{B_h^{\stab}(u_h-I_\Theta^{k_\spa}(u^e),v_h)}{\triplenorm v_h \triplenorm_{h,\Theta}}\\
		& \leq  \triplenorm u^e-I_\Theta^{k_\spa}(u^e)\triplenorm_{h,\Theta}  \\ & \quad + c\left( \sup_{v_h\in V_{h,\Theta}^{k_\spa}} \frac{B_h^{\stab}(u^e-I_\Theta^{k_\spa}(u^e),v_h)}{\triplenorm v_h \triplenorm_{h,\Theta}} +    \sup_{v_h\in  V_{h,\Theta}^{k_\spa}} \frac{B_h^{\stab}(u^e-u_h,v_h)}{\triplenorm v_h \triplenorm_{h,\Theta}}\right).
	\end{align*}
	\end{proof}
	\ \\
Estimates for the first term in the bound in \eqref{strangFormula} can be based on fairly straightforward interpolation error bounds. Such interpolation error bounds combined with a suitable continuity argument lead to an estimate for the second term on the right-hand side in \eqref{strangFormula}, too. These estimates are further discussed in   Section~\ref{secinterpol}. In Section~\ref{secconsistency} we derive an estimate of the remaining third term, which measures the inconsistency due to geometric errors. Note that in case of an exact surface, i.e., $S_h=S$ and the finite element space $V_{h,\Theta}^{k_s}$ replaced by $V_h^{k_s}$ this term vanishes due to Galerkin orthogonality.

\subsection{Consistency error analysis}
 \label{secconsistency} 
 The most technical part of the discretization error analysis is the derivation of bounds for the consistency error (third term of the right-hand side in \eqref{strangFormula}), which measures the effect of the geometry approximation. In this section we derive such bounds.  We will use key results derived in the Sections~\ref{section_basic_geometry_est} and \ref{secGeometry},  e.g., the partial integration rule \eqref{PI-unschoen} and the geometric estimates in the Lemmas~\ref{l_mu}, \ref{l3} and \ref{l4}.   
 
We start with two estimates that we use in the proof of Theorem~\ref{thmconsisteny} below.
\begin{lemma}\label{lemma_divs}
 The following uniform estimates hold
 \begin{align} \label{div2A}
 \norm{\div_{S_h}\big(\frac{1}{\alpha_h}\bP_{S_h} \w_S\big) - \big(\div_S\frac{1}{\alpha}\w_S\big)^e}_{L^\infty(S_h)} &\lesssim h^{k_{\spa}},\\
 \norm{\Div_{\Gamma_h}(\bw^e)-(\Div_{\Gamma}\bw)^e}_{L^\infty(S_h)}&\lesssim h^{k_\spa}.\label{div3}
\end{align}
\end{lemma}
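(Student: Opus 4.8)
The plan is to derive both estimates from the divergence transformation encapsulated in Corollary~\ref{CorH} (and its spatial analogue), applied to a cleverly chosen vector field whose lift equals the smooth target field, so that the geometric error reduces to the factor $h^{k_s}$ times a \emph{bounded} surface gradient. The difficulty is concentrated entirely in \eqref{div2A}, where one also has to replace the ideal field by the computable discrete one.

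For the spatial estimate \eqref{div3} this is immediate. The spatial analogue of Lemma~\ref{l6} is exactly the Demlow identity \eqref{trafo_demlow_grad}, from which---arguing componentwise as in the proof of Corollary~\ref{CorH}---one obtains $\|\Div_{\Gamma_h}\bv_h - (\Div_\Gamma \bv_h^l)^e\|_{L^\infty}\lesssim h^{k_s}\|\nabla_{\Gamma_h}\bv_h\|_{L^\infty}$ for $\bv_h\in (W^{1,\infty})^3$. I would apply this with $\bv_h=\w^e$. Since the spatial closest point projection satisfies $(\w^e)^l=\w$ on $\Gamma(t)$, the lift term equals $(\Div_\Gamma \w)^e$ exactly, and $\|\nabla_{\Gamma_h}\w^e\|_{L^\infty}\lesssim 1$ by smoothness of $\bw$; this yields \eqref{div3}.

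For \eqref{div2A} I would first use that $\w_S$ is tangential to $S$, i.e. $\w_S\cdot\n_S=0$, so that $\tfrac1\alpha\w_S=\tfrac1\alpha\bP_S\w_S$ and hence, by Corollary~\ref{CorH} applied to the comparison field $(\tfrac1\alpha\w_S)^e=\tfrac1{\alpha^e}\bP_S^e\w_S^e$ (whose lift is exactly $\tfrac1\alpha\w_S$ and whose surface gradient is bounded, being the restriction to $S_h$ of a smooth extended field), $(\div_S\tfrac1\alpha\w_S)^e = \div_{S_h}\big((\tfrac1\alpha\w_S)^e\big)+O(h^{k_s})$. It then remains to bound $\|\div_{S_h}\mathbf{d}\|_{L^\infty(S_h)}\lesssim h^{k_s}$ for $\mathbf{d}:=\tfrac1{\alpha_h}\bP_{S_h}\w_S-\tfrac1{\alpha^e}\bP_S^e\w_S^e$. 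I would split $\mathbf{d}=\mathbf{d}_1+\mathbf{d}_2+\mathbf{d}_3$ with $\mathbf{d}_1=(\tfrac1{\alpha_h}-\tfrac1{\alpha^e})\bP_{S_h}\w_S$, $\mathbf{d}_2=\tfrac1{\alpha^e}(\bP_{S_h}-\bP_S^e)\w_S$ and $\mathbf{d}_3=\tfrac1{\alpha^e}\bP_S^e(\w_S-\w_S^e)$. The terms $\mathbf{d}_1$ and $\mathbf{d}_3$ are controlled by the product rule together with the value bound \eqref{alpha_est} and gradient bound \eqref{alpha_grad_est} for $\alpha^e-\alpha_h$, the estimate $\|\w_S-\w_S^e\|_{L^\infty}\lesssim h^{k_s+1}$ (from \eqref{dist_est}), and the observation that the tangential-to-$S_h$ gradient of $\w_S-\w_S^e$ is itself of order $h^{k_s}$: although $\nabla_{(\bx,t)}(\w_S-\w_S^e)$ has an $O(1)$ component, that component points in the $S$-normal direction $\n_S^e$, which is annihilated by $\bP_{S_h}$ up to the error $\bP_{S_h}\n_S^e=\bP_{S_h}(\n_S^e-\n_{S_h})=O(h^{k_s})$ of \eqref{st_normal_est}.

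The crux is the middle term $\mathbf{d}_2$, for which $\nabla_{S_h}(\bP_{S_h}-\bP_S^e)$ cannot be estimated by naive differentiation (an inverse inequality would only give $O(h^{k_s-1})$). Instead I would write $\bP_{S_h}-\bP_S^e=\n_S^e(\n_S^e)^T-\n_{S_h}\n_{S_h}^T$ and exploit two structural facts. First, $\n_S^e\cdot\w_S^e\equiv 0$ on $U$, being the extension of the identically vanishing function $\n_S\cdot\w_S$; consequently both scalar coefficients $\n_{S_h}\cdot\w_S$ and $\n_S^e\cdot\w_S$ are small, of orders $h^{k_s}$ and $h^{k_s+1}$ respectively. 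Second, for a field of the form $f\,\n_{S_h}$ one has $\div_{S_h}(f\,\n_{S_h})=f\,\div_{S_h}\n_{S_h}$, since $\nabla_{S_h}f$ is tangential to $S_h$ and is therefore contracted to zero against $\n_{S_h}$. Together with the uniform boundedness of the discrete Weingarten quantities $\|\nabla_{S_h}\n_{S_h}\|_{L^\infty}$, $\|\div_{S_h}\n_{S_h}\|_{L^\infty}\lesssim 1$ (a standard property of the isoparametric surface approximation, cf.\ \cite{HeimannLehrenfeld}), this reduces every contribution to $\div_{S_h}\mathbf{d}_2$ to a product of a bounded geometric factor with one of the above small quantities, giving the required $O(h^{k_s})$ bound. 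Collecting the estimates for $\mathbf{d}_1,\mathbf{d}_2,\mathbf{d}_3$ and combining with the comparison-field step completes the proof of \eqref{div2A}. I expect the treatment of $\mathbf{d}_2$---specifically, making rigorous that the dangerous normal-derivative contributions are eliminated by the tangential projection rather than merely bounded---to be the main technical obstacle.
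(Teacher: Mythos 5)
Your proposal is correct and follows essentially the same route as the paper's proof: the comparison with the exact surface is done by Corollary~\ref{CorH} applied to $\tfrac{1}{\alpha^e}\w_S^e$ (whose lift is $\tfrac1\alpha\w_S$), and the remaining discrete-side difference is controlled by the product rule, the bounds \eqref{alpha_est}, \eqref{alpha_grad_est}, \eqref{st_normal_est}, the key identity $\div_{S_h}(f\,\n_{S_h})=f\,\div_{S_h}\n_{S_h}$ (tangentiality of $\nabla_{S_h}f$), and the smallness of $\n_{S_h}\cdot\w_S$ obtained from $\n_S^e\cdot\w_S^e=0$. Your three-term splitting $\mathbf{d}_1+\mathbf{d}_2+\mathbf{d}_3$ is a cosmetic rearrangement of the paper's two-step decomposition, with the added benefit that your argument for $\mathbf{d}_3$ — that the $O(1)$ part of $\nabla_{(\bx,t)}(\w_S-\w_S^e)$ points along $\n_S^e$ and is therefore annihilated by $\bP_{S_h}$ up to $O(h^{k_\spa})$ — explicitly justifies the estimate $\norm{\div_{S_h}(\w_S-\w_S^e)}_{L^\infty(S_h)}\lesssim h^{k_\spa}$ that the paper asserts without proof.
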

\begin{proof}
 We write
 \[ \begin{split}
 & \div_{S_h}\big(\frac{1}{\alpha_h}\bP_{S_h} \w_S\big) - \big(\div_S\frac{1}{\alpha}\w_S\big)^e = \\
  & \Big(\div_{S_h}\big(\frac{1}{\alpha_h}\bP_{S_h} \w_S\big) - \div_{S_h}\frac{1}{\alpha^e}\w_S^e\Big) + \Big( \div_{S_h}\frac{1}{\alpha^e}\w_S^e - \big(\div_S\frac{1}{\alpha}\w_S\big)^e\Big)=I+II. 
 \end{split} \]
For the term $II$ we use \eqref{estdiv} with $\bv_h=\frac{1}{\alpha^e}\w_S^e$, i.e., $\bv_h^l=\frac{1}{\alpha}\bw_S$, and thus we get
\[
 \norm{\div_{S_h}\frac{1}{\alpha^e}\w_S^e - \big(\div_S\frac{1}{\alpha}\w_S\big)^e}_{L^\infty(S_h)} \lesssim h^{k_{\spa}}.
\]
For term $I$ we obtain, using the product rule, \eqref{alpha_grad_est} and $\norm{\div_{S_h}( \bw_S -\w_S^e)}_{L^\infty(S_h)}  \lesssim h^{k_{\spa}}$:
\begin{align*}
  &\norm{\div_{S_h}\big(\frac{1}{\alpha_h}\bP_{S_h} \w_S\big) - \div_{S_h}\frac{1}{\alpha^e}\w_S^e}_{L^\infty(S_h)}\\
  \lesssim & \norm{\div_{S_h}\big(\frac{1}{\alpha^e}( \bP_{S_h} \bw_S -\w_S^e)\big)}_{L^\infty(S_h)} + h^{k_{\spa}}   \lesssim \norm{\div_{S_h}( \bn_{S_h} \bn_{S_h}\cdot \bw_S) }_{L^\infty(S_h)}+ h^{k_{\spa}}  \\
  \lesssim &\norm{\bn_{S_h} \cdot \bw_S}_{L^\infty(S_h)}  \norm{\div_{S_h} \bn_{S_h}}_{L^\infty(S_h)}+ h^{k_{\spa}}  \lesssim h^{k_{\spa}}.
\end{align*}
\textcolor{black}{The second last inequality follows from the product rule for the divergence operator and by noting that $\n_{S_h}^T\nabla_{S_h}(\n_{S_h}\cdot \w_S)=0$}.  
In the last inequality we used $\bn_{S_h} \cdot \bw_S = \bP_S^e \bn_{S_h} \cdot \bw_S $ and the estimate \eqref{st_normal_est}. 
A proof of  \cref{div3} follows with similar arguments as in the proof of Corollary~\ref{CorH}, using the transformation formula \eqref{trafo_demlow_grad}.
\end{proof}
 \ \\
	\begin{theorem} \label{thmconsisteny}
	Let $u\in H^2(S)$, $u\in H^1(\Gamma(t))$ for all $t\in[0,T]$, be the solution of \cref{prob_cont} and $u_h$ the solution of \cref{discreteproblem} with $f_h$ that satisfies \eqref{f_approx}. Then we have the bound
	\begin{equation}
		\sup_{v_h\in V_{h,\Theta}^{k_\spa}} \frac{\abs{B_h^{\stab}(u^e-u_h,v_h)}}{\triplenorm v_h \triplenorm_{h,\Theta}}\lesssim h^{k_{\spa}} \big(\norm{u}_{H^2(S)}+h\norm{f}_{L^2(S)}\big).\label{konsistenz}
	\end{equation}
\end{theorem}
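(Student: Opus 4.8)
The plan is to test the discrete equation against $v_h\in V_{h,\Theta}^{k_\spa}$, insert the continuous weak equation \eqref{prob_cont} with test function $v_h^l$, and split the residual into three groups. Since $u_h$ solves \eqref{discreteproblem} and $u$ solves \eqref{prob_cont}, I would write
\begin{align*}
 B_h^{\stab}(u^e - u_h, v_h) &= \Big(B_h(u^e, v_h) - \sum_{n=1}^N B_n(u, v_h^l)\Big) + s_1(u^e, v_h) + s_2(u^e, v_h) \\
 &\quad + \Big(\int_S \tfrac{1}{\alpha} f v_h^l \dif\sigma - \int_{S_h}\tfrac{f_h v_h}{\sqrt{1+V_h^2}}\dif\sigma_h\Big),
\end{align*}
and bound every bracket by $h^{k_\spa}\triplenorm v_h\triplenorm_{h,\Theta}$, the last one carrying the extra factor $h\norm{f}_{L^2(S)}$.

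For the data term I would rewrite the continuous integral on $S$ as an integral on $S_h$ by \eqref{helptrans} (using $f\circ\bp=f^e$, $v_h^l\circ\bp = v_h$), turning the bracket into $\int_{S_h}\frac{(\mu_h f^e - f_h)v_h}{\sqrt{1+V_h^2}}\dif\sigma_h$, which by Cauchy--Schwarz and \eqref{f_approx} is $\lesssim h^{k_\spa+1}\norm{f}_{L^2(S)}\triplenorm v_h\triplenorm_{h,\Theta}$. The two stabilizations, which have no continuous counterpart, are treated by two structural facts: since $u^e$ is constant along $\n$ one has $\n\cdot\nabla u^e = 0$, hence $\n_h\cdot\nabla u^e = (\n_h-\n)\cdot\nabla u^e$ is $O(h^{k_\spa})$ by \eqref{spa_normal_est}, which together with $\xi_1\lesssim h^{-1}$ and an $O(h^{1/2})$ strip bound for $\norm{\nabla u^e}_{L^2(Q_\Theta^S)}$ controls $s_1(u^e,v_h)$; and the mass conservation $\int_{\Gamma(t)}u\dif s = 0$ combined with \eqref{einminusmu_Space} gives $\abs{\overline{u^e}_h^n(t)}\lesssim h^{k_\spa+1}\norm{u(\cdot,t)}_{L^2(\Gamma(t))}$, so that $s_2(u^e,v_h)\lesssim h^{k_\spa}\norm{u}_{L^2(S)}\triplenorm v_h\triplenorm_{h,\Theta}$ by \eqref{stabiparameterbedingung}. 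For the diffusion part of the first bracket I would transport the continuous Dirichlet integral to $S_h^n$ via \eqref{Trafo_With_A_h} (taking $u^e\restrict{S_h}$ as discrete representative of $u$) and estimate the difference of the coefficient $\ga_h$ against $\alpha_h^{-1}$ using \Cref{gradestimate}, \eqref{einminusmu_Space}, \eqref{dist_est}, \eqref{spa_normal_est} and \eqref{alpha_est}.

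The heart of the argument is the joint treatment of the material-derivative, reaction and time-boundary terms. I would apply the discrete partial integration \eqref{PI-unschoen} (with $\alpha_h$, $u^e$, $v_h$) to the antisymmetric term, so that the surface divergence term $\Div_{S_h}(\frac{1}{\alpha_h}\bP_{S_h}\w_S)$ it generates combines with the reaction term and mirrors, up to \Cref{lemma_divs}, the continuous $\frac{1}{\alpha}\Div_{\Gamma}\w$; the $R$-weighted top/bottom terms of \eqref{discreteformendetail} are matched against the continuous boundary contributions modulo $\norm{R-1}_{L^\infty}$ (\Cref{l4}). Because $u$ is continuous in time, the only surviving time-slab discrepancy is $u^e(\cdot,t_{n-1})\circ T_\Theta^{n-1} - u^e(\cdot,t_{n-1})$ with $T_\Theta^{n-1}=\Theta_h^{n-1}\circ(\Theta_h^n)^{-1}$, which is $O(h^{k_\spa+1}\norm{\nabla u^e(\cdot,t_{n-1})})$ by \eqref{theta_psi_est}; summing over the $N\sim h^{-1}$ interfaces forces a discrete-in-time trace inequality and is exactly the place where the full $\norm{u}_{H^2(S)}$ regularity enters.

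The main obstacle is the set of interior conormal-jump terms $\sum_{F\in\mathcal F_I^n}\int_F u^e v_h\,\w_S\cdot[\alpha_h^{-1}]_{\bnu}\dif F$. Bounding $[\alpha_h^{-1}]_{\bnu}$ by the sharp but only first-order estimate \eqref{InnerJumpnE} is insufficient, since the two skeleton trace inequalities \eqref{CustomInvIneq2} cost a factor $h^{-1}$. The resolution, which is the conceptual crux, uses that $\alpha_h$ is continuous across $F$ (so $[\alpha_h^{-1}]_{\bnu}=\alpha_h^{-1}[1]_{\bnu}$) together with the orthogonality $\w_S^e\cdot\n_S^e = 0$ following from \eqref{ns_schreibweise} (as $V=\w\cdot\n$): splitting $\w_S=\w_S^e+(\w_S-\w_S^e)$ lets me replace $\w_S^e\cdot[1]_{\bnu}$ by $\w_S^e\cdot\bP_S^e[1]_{\bnu}$ and invoke the improved estimate \eqref{alpha_h_good_jump} (built on \eqref{InnerJumpnEImproved}), which gains the extra power $h^{k_\spa+1}$ needed to absorb the $h^{-1}$. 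This, and the analogous necessity of the $R$-weights in the boundary terms, is precisely why the sharp bounds of \Cref{l4} and \Cref{l3} alone do not suffice and the refined estimates are indispensable; the remaining comparisons are technical but routine.
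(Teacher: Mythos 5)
Your proposal is correct and follows essentially the same route as the paper's proof: the same decomposition into (discrete form on $u^e$) minus (continuous form on $v_h^l$) plus stabilization and data terms, transformation back to $S_h$ via \eqref{helptrans}, \eqref{Trafo_With_A_h}, \eqref{2.2.19}, discrete partial integration \eqref{PI-unschoen}, Lemma~\ref{lemma_divs} for the divergence terms, the $O(h^{k_\spa+1})$ bound on $[u^e]_h^{n-1}$ from \eqref{theta_psi_est} combined with \eqref{CustomInvIneq_bot}, the improved tangential jump estimate \eqref{alpha_h_good_jump} with the Hansbo inequality \eqref{hansbo_var} and \eqref{CustomInvIneq2} for the conormal term, and the same treatment of $s_1$, $s_2$ and the data term. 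Your explicit justification for inserting $\bP_S^e$ into the conormal jump term (splitting $\w_S=\w_S^e+(\w_S-\w_S^e)$ and using $\w_S^e\cdot\n_S^e=0$) is a nice elaboration of a step the paper performs tacitly; only your parenthetical claim that $\alpha_h$ is continuous across $F$ is inessential (and not needed), since \eqref{alpha_h_good_jump} is obtained by comparing $\alpha_h^{-1}$ with the continuous $(\alpha^e)^{-1}$ rather than by factoring the jump.
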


\begin{proof} 
Let $v_h\in V_{h,\Theta}^{k_\spa}$ be arbitrary. First note that using $ \mu_h \dif s_h= \dif s \circ \bp$ we get the relation  
\begin{align}
	\begin{aligned}
		B_h^{\stab}(u_h,v_h)= \sum_{n=1}^N\int_{t_{n-1}}^{t_n}\int_{\Gamma_h^n(t)}(f_h-\mu_hf^e) v_h\dif s_h\dif t+ \int_0^T\int_{\Gamma(t)} f v_h^l \dif s\dif t.\label{consis_start_1}
	\end{aligned}
\end{align}
The function $u$ solves the continuous problem \cref{prob_cont}, i.e. we have 
\begin{equation*}
	\int_0^T\int_{\Gamma(t)} f v_h^l \dif s\dif t=\sum_{n=1}^N\left[\int_{S^n}\frac{1}{\alpha}\left(\dot{u} v_h^l  + uv_h^l\div_{\Gamma}\w+\mu_d\nabla_{\Gamma}u\cdot \nabla_{\Gamma}v_h^l\right)\dif \sigma\right].
\end{equation*}
Note that $u\in C^0(S)$, hence the jump term in \cref{prob_cont} vanishes. Note that $\dot{u} = \w_S \cdot \nabla_S u$.  We use the  results \eqref{helptrans}, \cref{Trafo_With_A_h} and \cref{2.2.19} to transform back to the discrete manifold $S_h$, i.e.,
\begin{equation}\label{consis_start_2}
	\begin{split}
		 &\int_0^T\int_{\Gamma(t)} f v_h^l \dif s\dif t =\sum_{n=1}^N\Bigg[ \int_{S_h^n}\frac{\mu_h}{\sqrt{1+V_h^2}} (\w_S^e \cdot\tilde{\gd}^{-1}\gp_0\nabla_{S_h}u^e) v_h\\
		&\qquad+ \frac{\mu_h}{\sqrt{1+V_h^2}} u^ev_h (\div_{\Gamma} \w)^e+\mu_d\ga_h \nabla_{\Gamma_h}u^e\cdot \nabla_{\Gamma_h}v_h\dif \sigma_h \Bigg].
	\end{split}
\end{equation}
On the other hand, for the (extended) exact solution $u^e$ substituted in the discrete bilinear form \Cref{stabbilform} we obtain
\begin{align*}
	&B_h^{\stab}(u^e,v_h)=\sum_{n=1}^N\bigg[\int_{S_h^n}\frac{1}{2\alpha_h}\big(v_h \mathring{u^e}
	 -u^e \mathring{v}_h+u^ev_h\div_{\Gamma_h}\w+ 2 \mu_d\nabla_{\Gamma_h}u^e\cdot \nabla_{\Gamma_h}v_h \big) \dif \sigma_h\\\
	& \qquad +
	\frac{1}{2}\int_{\Gamma_h^n(t_n)}u^e v_{h,-}^n R_-^{n}\dif s_h +\frac12 \int_{\Gamma_h^n(t_{n-1})} u^e v_{h,+}^{n-1}R_+^{n-1}\dif s_h\\
	&\qquad - \int_{\Gamma_h^n(t_{n-1})} (u^e \circ\Theta_h^{n-1}\circ (\Theta_h^n)^{-1})v_{h,+}^{n-1}R_+^{n-1} \dif s_h  \bigg] + s_1(u^e,v_h) + s_2(u^e,v_h).
\end{align*}
Partial integration \cref{PI-unschoen} leads to 
\begin{align}
			&B_h^{\stab}(u^e,v_h) =\sum_{n=1}^N\bigg[ 
		\int_{S_h^n}\frac{1}{ \alpha_h}\big(  \mathring{u^e} v_h+ \frac12 u^ev_h\div_{\Gamma_h}\w +
		  \mu_d \nabla_{\Gamma_h}u^e\cdot\nabla_{\Gamma_h}v_h \big)\dif\sigma_h  \nonumber \\ & \qquad + \int_{\Gamma_h^n(t_{n-1})}[u^e]^{n-1}_h v_{h,+}^{n-1}R_+^{n-1}\dif s_h+\frac12\sum_{K_S\in  \mathcal{T}_{S_h^n}}\int_{K_S} u^ev_h\div_{S_h}\left( \frac{1}{\alpha_h} \gp_{S_h}\w_S\right)\dif \sigma_h \nonumber \\
		&\qquad- \frac12\sum_{F\in \mathcal{F}_I^n}\int_{F} u^ev_h \w_S\cdot \left[\alpha_h^{-1}\right]_{\bnu}\dif F\bigg]+ s_1(u^e,v_h) + s_2(u^e,v_h).\label{pi_used_consis}
\end{align}
We combine \cref{consis_start_1,consis_start_2,pi_used_consis} and obtain, using $\mathring{u^e} = \bw_S \cdot \nabla_{S_h}u^e$,
\begin{align}
	&B_h^{\stab}(u^e-u_h,v_h) =
	\int_{S_h}\left(\frac{1}{\alpha_h}\w_S^T-\frac{\mu_h}{\sqrt{1+V_h^2}} \w_S^e \cdot\tilde{\gd}^{-1}\gp_0\right) \nabla_{S_h}u^e  v_h\dif \sigma_h  \label{T1}\\
	&  + \underbrace{\int_{S_h} \frac{1}{2\alpha_h}u^e v_h\div_{\Gamma_h}\w
	-\frac{\mu_h}{\sqrt{1+V_h^2}} u^ev_h (\div_{\Gamma} \w)^e\dif \sigma_h}_{I}  \nonumber \\
	& +\mu_d\int_{S_h}\left(\frac{1}{\alpha_h}\mathbf{I}-\ga_h\right)\nabla_{\Gamma_h}u^e\cdot \nabla_{\Gamma_h}v_h\dif \sigma_h \label{T2} \\ 
	& +\sum_{n=1}^N\bigg[ \int_{\Gamma_h^n(t_{n-1})}[u^e]^{n-1}_h v_{h,+}^{n-1}R_+^{n-1}\dif s_h\bigg] \label{T3} \\ & +\underbrace{\frac12\sum_{K_S\in  \mathcal{T}_{S_h}}\int_{K_S} u^ev_h\div_{S_h}\left( \frac{1}{\alpha_h} \gp_{S_h}\w_S\right)\dif \sigma_h}_{II}
	 - \underbrace{\frac12\sum_{F\in \mathcal{F}_I}\int_{F} u^ev_h \w_S\cdot {\color{black}\bP_S^e}\left[\alpha_h^{-1}\right]_{\bnu}\dif F}_{III}  \nonumber\\ &
	 + \int_{S_h}\frac{(f_h-\mu_hf^e) v_h}{\sqrt{1+V_h^2}}\dif \sigma_h+ s_1(u^e,v_h) + s_2(u^e,v_h).\label{T4}
\end{align}
We now derive bounds for these terms using results from Sections~\ref{section_basic_geometry_est} and \ref{secGeometry}.
We first consider the term on the right-hand side in \eqref{T1}, denoted by $T_1$. Using the smoothness of $\bw$, \cref{einminusmu_Space}, \cref{alpha_h_sqrtvh} and \cref{ADD} we get 
\[ \big\|\big(\frac{1}{\alpha_h}\w_S^T-\frac{\mu_h}{\sqrt{1+V_h^2}} \w_S^T \tilde{\gd}^{-1}\gp_0\big) \bP_{S_h}\big\|_{L^\infty(S_h)} \lesssim \big\|\bP_{S_h} - \tilde{\gd}^{-1}\gp_0\big\|_{L^\infty(S_h)} +h^{k_s} \lesssim h^{k_s}.
\]
From this it follows that $|T_1| \lesssim h^{k_{s}} \norm{v_h}_{L^2(S_h)}\norm{u}_{H^1(S)}$. 
We now estimate the term in \eqref{T2}, denoted by $T_2$. 
 Recall the definition of $\ga_h$ in \cref{ah_def}. We again use \cref{einminusmu_Space}, \cref{alpha_h_sqrtvh}. Also note that  ${\left(\bI-\delta \gh\right)^{-1}=\bI+\mathcal{O}(h^{k_{\spa}+1})}$ and  ${\gp_h\tilde{\gp}_h=\gp_h}$. With these results we obtain
\[
	\big\|\big(\frac{1}{\alpha_h} \bI -\ga_h\big)\gp_h)\big\|_{L^\infty(S_h)}\lesssim  \big\|\gp_h-\tilde{\gp}_h\big\|_{L^\infty(S_h)}+h^{k_{\spa}}\lesssim h^{k_{\spa}},
\]
and with this we get $|T_2| \lesssim h^{k_{\spa}} \norm{v_h}_{L^2(S_h)}\norm{u}_{H^1(S)}$.
We consider the term in \eqref{T3}, denoted by $T_3$. 
Using the continuity of $u^e$ we get, on $\Gamma_h^n(t_{n-1})$:
\begin{equation} \label{A1neu}
  [u^e]_h^{n-1}= u^e\big({\rm id} -\Theta_h^{n-1}\circ (\Theta_h^n)^{-1}\big)= u^e \big (\Theta_h^n-\Theta_h^{n-1}\big)(\Theta_h^n)^{-1}. 
\end{equation}
Using \eqref{theta_psi_est} and a Sobolev embedding estimate  we obtain $\big\|[u^e]_h^{n-1}\big\|_{L^\infty(\Gamma_h^n(t_{n-1}))} \lesssim h^{k_{\spa}+1} \|u\|_{H^2(S)}$. Combining this with \eqref{CustomInvIneq_bot} yields
\begin{equation} \label{A2neu} \begin{split}
  |T_3|  & \lesssim 
 h^{k_{\spa}+1} \|u\|_{H^2(S)}\sum_{n=1}^N\norm{v_{h,+}^{n-1}}_{L^2(\Gamma^n_h(t_{n-1}))}  \\ & \lesssim h^{k_{\spa}+\frac12}
  \big(\sum_{n=1}^N\norm{v_{h,+}^{n-1}}_{L^2(\Gamma^n_h(t_{n-1}))}^2 \big)^\frac12\|u\|_{H^2(S)}   \lesssim
  h^{k_{\spa}} \triplenorm v_h\triplenorm_{h,\Theta} \|u\|_{H^2(S)}.
  \end{split}
  \end{equation}
We combine the terms $(I)$ and $(II)$. First note that using \cref{div3}, \cref{einminusmu_Space}, \cref{alpha_h_sqrtvh} and \cref{alpha_est}
we get 
\[
 \big|(I) + \int_{S_h} \frac{1}{2 \alpha^e}u^e v_h (\div_\Gamma \w)^e \dif \sigma_h \big| \lesssim h^{k_{\spa}}\norm{u}_{L^2(S)}\norm{v_h}_{L^2(S_h)}.
\]
Using \cref{div2A} and \eqref{elneu} yields
\begin{equation*}
	\abs{(II) -\int_{S_h}\frac{1}{2\alpha^e} u^ev_h (\div_\Gamma \w)^e \dif \sigma_h } \lesssim h^{k_{\spa}}\norm{u}_{L^2(S)}\norm{v_h}_{L^2(S_h)}.
\end{equation*}
Combining this we obtain $|(I) + (II)| \lesssim h^{k_{\spa}}\norm{u}_{L^2(S)}\norm{v_h}_{L^2(S_h)}$.

We turn to the bound for $(III)$. We need the following Hansbo trace type inequality. For an arbitrary $F\in \mathcal{F}_I$ we take a corresponding four-dimensional space-time prism $P$ of the outer triangulation  $P\in Q_h^{S}$ with $P\cap F\neq \emptyset$. We then have for $w \in H^2(P)$
\begin{equation} \begin{split}
	\norm{w}_{L^2(F)}^2 & \lesssim h^{-1}\norm{w}_{L^2(
\partial P)}^2+h\norm{w}_{H^1(
\partial P)}^2  \\ & \lesssim h^{-2}\norm{w}_{L^2(P)}^2+ \norm{w}_{H^1(
P)}^2 + h^2\norm{w}_{H^2(
P)}^2, \label{hansbo_var}
\end{split} \end{equation}
which follows from \cite[Lemma 4.2]{Hansbo03}.
Note that for $P \in Q_h^{S}$ we have $\|\delta\|_{L^\infty(P)} \lesssim h$ and using $\bn \cdot \nabla u^e = 0 $ on $P$ we get \cite[Section 6.1]{olshanskii2014error} $\norm{u^e}^2_{H^2(P)}\lesssim h\norm{u}^2_{H^2(S \cap P)}$. 
With this, the bound of the conormal jumps in \cref{alpha_h_good_jump}, the trace inequality \eqref{hansbo_var} and the face integral bound \cref{CustomInvIneq2} we obtain:
	\begin{align*}
		\abs{(III)}&\lesssim h^{k_{\spa}+1}\big(\sum_{P\in Q_h^{S}}h^{-2}\norm{u^e}^2_{L^2(P)}+2\norm{u^e}^2_{H^1(P)}+h^{2}\norm{u^e}^2_{H^2(P)}\big)^{\frac12} \big(\sum_{F\in \mathcal{F}_I}\norm{v_h}^2_{L^2(F)}\big)^{\frac12}\\
		&\lesssim h^{k_{\spa}+1}\big(\sum_{P\in Q_h^{S}}h^{-2}\norm{u^e}_{H^2(P)}^2\big)^{\frac12} \big(\sum_{F\in \mathcal{F}_I}\norm{v_h}^2_{L^2(F)}\big)^{\frac12}\\
		&\lesssim h^{k_{\spa}+1}h^{-\frac12}\norm{u}_{H^2(S)} h^{-\frac12} \triplenorm v_h\triplenorm_{h,\Theta}\lesssim h^{k_{\spa}}  \norm{u}_{H^2(S)} \triplenorm v_h\triplenorm_{h,\Theta}. 
	\end{align*}
Finally we consider the terms in \eqref{T4}.   For the first term in \eqref{T4} we get, using the data accuracy assumption
\cref{f_approx}:
\begin{equation}
	\abs{ \int_{S_h}\frac{(f_h-\mu_hf^e) v_h}{\sqrt{1+V_h^2}}\dif \sigma_h}\lesssim h^{k_{\spa}+1} \norm{f}_{L^2(S)}\norm{v_h}_{L^2(S_h)}.\label{f9_estimate}
\end{equation}
For the normal gradient stabilization term we use $\n \cdot \nabla u^e =0$, \cref{spa_normal_est} and $\norm{u^e}^2_{H^2(Q^S)}\lesssim h\norm{u}^2_{H^2(S)}$:
\begin{align}
	\begin{aligned}\label{f8_estimate}
		|s_1(u^e,v_h)| &=\big| \xi_1\int_{Q^{S}}(\n_h\cdot \nabla u^e) (\n_h\cdot \nabla v_h)\dif \left(\x,t\right) \big| \\
		&=\big| \xi_1\int_{Q^{S}}\left((\n-\n_h)\cdot \nabla u^e\right) (\n_h\cdot \nabla v_h)\dif \left(\x,t\right) \big|\\
		&\lesssim \xi_1^{\frac12} h^{k_{\spa}}\norm{\nabla u^e}_{L^2(Q^{S})}\xi_1^{\frac12}\norm{\n_h\cdot\nabla v_h}_{L^2(Q^{S})}\lesssim h^{k_{\spa}} \norm{u}_{H^2(S)}\triplenorm v_h \triplenorm_{h,\Theta}.
	\end{aligned}
\end{align} 
For the other  stabilization term we use $ \int_{\Gamma(t)} u \dif s =0$, $\mu_h \dif s_h=\dif s$,  the estimate 
 \eqref{einminusmu_Space}, which then implies $|\overline{u^e}^n(t)| \lesssim
h^{k_{\spa}+1} \|u\|_{L^2(\Gamma(t))}$. Using this and the upper bound for \eqref{stabiparameterbedingung} for $\xi_2$:
\begin{align*}
 |s_2(u^e,v_h)| & = \xi_2 \big| \sum_{n=1}^N \int_{t_{n-1}}^{t_{n}}\overline{u^e}^n \overline{v}_h^n \dif t \big| \lesssim h^{k_{\spa}} \norm{u}_{L^2(S)}\norm{v_h}_{L^2(S_h)}.
\end{align*}
Combining the estimates for the different terms in \eqref{T1}-\eqref{T4} completes the proof.
\end{proof}

\subsection{Interpolation and discretization error bounds} \label{secinterpol}
In this section we discuss how bounds for the first and second term on the right-hand side of the Strang estimate \eqref{strangFormula} can be derived. These are based on interpolation error bounds and a continuity estimate.
We first address the interpolation error bounds. The interpolation operator ${I_\Theta^{k_\spa}:C^0(Q_\Theta^S)\rightarrow V_{h,\Theta}^{k_\spa}}$ is defined by $(I_\Theta^{k_\spa} v)\circ \Theta_h^n = I^{k_\spa}(v\circ \Theta_h^n)$ on $Q_n^S$, with $I^{k_\spa}$ the standard nodal interpolation operator in the space-time finite element space $V_h^{k_\spa}$ of polynomials of degree $k_\spa$ both in space and in time. For $I^{k_\spa}$ optimal error bounds are well-known. With fairly straightforward arguments, which are elaborated for the stationary case in \cite{grande2018analysis} and for the space-time case with $k_s=1$ in \cite{Diss_Sass}, one obtains optimal interpolation error bounds for $I_\Theta^{k_\spa}$, too. 
\begin{proposition} \label{PropInt}
For $u\in H^{k_\spa+1}(S)$ we have
\begin{align} \label{int11} 
& \|u^e - I_\Theta^{k_\spa}(u^e)\|_{L^2(Q_\Theta^S)} + h\|u^e - I_\Theta^{k_\spa}(u^e)\|_{H^1(Q_\Theta^S)}  \nonumber \\ & \qquad  \lesssim h^{k_\spa +1} \|u^e\|_{H^{k_\spa +1}(Q_\Theta^S) } \lesssim h^{k_\spa +\frac32}\|u\|_{H^{k_\spa +1}(S) },\\
  & \|u^e - I_\Theta^{k_\spa}(u^e)\|_{L^2(S_h)} + h\|u^e - I_\Theta^{k_\spa}(u^e)\|_{H^1(S_h)}   \lesssim h^{k_\spa +1} \|u\|_{H^{k_\spa +1}(S) },\label{int11A}
\end{align}
and 
	\begin{equation} \label{int12} \triplenorm u^e-I_\Theta^{k_\spa}(u^e)\triplenorm_{h,\Theta}  \lesssim h^{k_\spa} \|u\|_{H^{k_\spa +1}(S) } +h^{k_\spa +\frac12} \|u\|_{W^{1, \infty}(S) }  .
	 \end{equation}
\end{proposition}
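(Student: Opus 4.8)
The plan is to reduce every bound to standard polynomial interpolation estimates on the \emph{undeformed} reference prisms $Q_n^S$ by exploiting the pull-back definition $(I_\Theta^{k_\spa}v)\circ\Theta_h^n = I^{k_\spa}(v\circ\Theta_h^n)$. First I would prove the volume estimate \eqref{int11}. Setting $v:=u^e\circ\Theta_h^n$ on $Q_n^S$, classical tensor-product interpolation theory for $I^{k_\spa}$ gives $\|v-I^{k_\spa}v\|_{L^2(Q_n^S)}+h\|v-I^{k_\spa}v\|_{H^1(Q_n^S)}\lesssim h^{k_\spa+1}\|v\|_{H^{k_\spa+1}(Q_n^S)}$. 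Since Lemma~\ref{l1} (in particular \eqref{theta_to_id} and \eqref{Dtheta_psi_est}) shows that $\Theta_h^n$ is a near-identity bi-Lipschitz map with $\|D\Theta_h^n\|,\|(D\Theta_h^n)^{-1}\|\sim 1$ and Jacobian $\sim 1$, the $L^2$- and $H^1$-norms on $Q_{\Theta,n}^S$ and $Q_n^S$ are equivalent, which transfers the estimate to the deformed prisms. The first inequality in \eqref{int11} then follows after summation over $n$; the second inequality uses the narrow-band extension estimate $\|u^e\|_{H^m(Q_\Theta^S)}^2\lesssim h\|u\|_{H^m(S)}^2$ (the strip has width $\mathcal{O}(h)$ and $u^e$ is the constant-normal extension), which is the same type of estimate already invoked as $\|u^e\|_{H^2(Q^S)}^2\lesssim h\|u\|_{H^2(S)}^2$ in the proof of Theorem~\ref{thmconsisteny}.

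For the surface estimate \eqref{int11A} I would pass from the four-dimensional prisms to the three-dimensional cut surface $S_h$ with the Hansbo-type trace inequality $\|w\|_{L^2(S_h\cap P)}^2\lesssim h^{-1}\|w\|_{L^2(P)}^2+h\|\nabla w\|_{L^2(P)}^2$ (a variant of \eqref{hansbo_var}) applied to $w:=u^e-I_\Theta^{k_\spa}(u^e)$. Summing over $P\in Q_h^S$ and inserting the bounds from \eqref{int11} (together with the one-order-lower interpolation bound for $\|w\|_{H^2(Q_\Theta^S)}$ needed for the gradient part) yields \eqref{int11A}; the gained factor $h^{-1}$ is exactly compensated by the extra $h^{1/2}$ already present in \eqref{int11}.

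The triple-norm bound \eqref{int12} is then assembled term by term from the definition \eqref{energy_norm_ho_def}. The $L^2(S_h)$- and $\nabla_{\Gamma_h}$-contributions are \eqref{int11A}. The stabilization term is controlled crudely by $\xi_1\|\n_h\cdot\nabla w\|_{L^2(Q_\Theta^S)}^2\le \xi_1\|\nabla w\|_{L^2(Q_\Theta^S)}^2\lesssim h^{-1}\cdot h^{2k_\spa+1}\|u\|_{H^{k_\spa+1}(S)}^2$, using $\xi_1\lesssim h^{-1}$ from \eqref{stabi_parameter_restrictions} and \eqref{int11}, which gives an $h^{k_\spa}$ contribution. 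The final-time term $\|w_-^N\|_{L^2(\Gamma_h^N(t_N))}$ is a trace of $w$ onto a two-dimensional face of the prisms, handled with the full inequality \eqref{hansbo_var}; after summation and the narrow-band estimate it produces $h^{k_\spa+\frac12}\|u\|_{H^{k_\spa+1}(S)}$, a power dominated by $h^{k_\spa}\|u\|_{H^{k_\spa+1}(S)}$.

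The main obstacle is the jump term $\sum_n\|[w]_h^{n-1}\|_{L^2(\Gamma_h^n(t_{n-1}))}^2$, because $S_h$ is discontinuous between time slabs and the jump \eqref{discrete_jumps} involves the transfer map $T:=\Theta_h^{n-1}\circ(\Theta_h^n)^{-1}$. Exploiting that $u^e$ is globally continuous, I would split $[w]_h^{n-1}=[u^e]_h^{n-1}-[I_\Theta^{k_\spa}u^e]_h^{n-1}$, where $[u^e]_h^{n-1}=u^e-u^e\circ T=u^e(\id-T)$ as in \eqref{A1neu}. The interpolant part reduces to one-sided traces of $w$ and is again bounded via \eqref{hansbo_var} by $h^{k_\spa+\frac12}\|u\|_{H^{k_\spa+1}(S)}$, which is absorbed into the first term of \eqref{int12}. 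The surface-mismatch part, however, is most naturally estimated pointwise by a Lipschitz argument, $\|[u^e]_h^{n-1}\|_{L^\infty}\lesssim\|\id-T\|_{L^\infty}\|\nabla u^e\|_{L^\infty}\lesssim h^{k_\spa+1}\|u\|_{W^{1,\infty}(S)}$ using \eqref{theta_psi_est}; taking $L^2$-norms over the bounded-measure surfaces and summing over the $N\sim h^{-1}$ slabs then produces exactly the irreducible term $h^{k_\spa+\frac12}\|u\|_{W^{1,\infty}(S)}$. This term cannot be folded into the $H^{k_\spa+1}$-term (the norms are not comparable for low $k_\spa$), which is precisely why it appears explicitly in \eqref{int12}. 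Throughout, the non-shape-regularity of $\T_{S_h}$ is never touched directly: all surface and face quantities are pulled back to the shape-regular prisms $Q_n^S$, on which the near-identity deformation $\Theta_h^n$ keeps every constant uniform.
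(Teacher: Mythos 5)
Your proposal is correct, and for \eqref{int11}, \eqref{int11A}, the stabilization term and the $L^2/\nabla_{\Gamma_h}$ terms of the triple norm it coincides with the paper's (only sketched) argument: pull-back to the undeformed prisms, standard tensor-product interpolation bounds, the narrow-band estimate $\|u^e\|^2_{H^m(Q_\Theta^S)}\lesssim h\|u\|^2_{H^m(S)}$, Hansbo-type traces, and $\xi_1\lesssim h^{-1}$. The genuine difference is the jump term (and, less importantly, the final-time term, where the paper simply uses $\|w\|_{L^2(\Gamma_h^N(t_N))}\lesssim \|w\|_{H^1(S_h)}$ together with \eqref{int11A}). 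For the jump, the paper pulls everything back to $\Gamma_{\rm lin}(t_{n-1})$ and exploits linearity of the nodal error operator: with $\hat\Theta_h^n:=\Theta_h^n(\cdot,t_{n-1})$ one has $E_\Theta^{k_\spa}(u^e)\circ\hat\Theta_h^n-E_\Theta^{k_\spa}(u^e)\circ\hat\Theta_h^{n-1}=E^{k_\spa}\big(u^e\circ\hat\Theta_h^n-u^e\circ\hat\Theta_h^{n-1}\big)$, i.e.\ the jump of the two slabs' interpolation errors is itself the nodal interpolation error of one \emph{small} function; $L^\infty$-stability of nodal interpolation and the Lipschitz bound $\|u^e\circ\hat\Theta_h^n-u^e\circ\hat\Theta_h^{n-1}\|_{L^\infty}\lesssim h^{k_\spa+1}\|u\|_{W^{1,\infty}(S)}$ (from \eqref{theta_psi_est}) give \eqref{PL} with no trace inequalities and no elementwise $H^2$ bounds. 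You instead split $[w]_h^{n-1}=[u^e]_h^{n-1}-[I_\Theta^{k_\spa}u^e]_h^{n-1}$, treat the surface mismatch $[u^e]_h^{n-1}$ by the same Lipschitz argument, and control the interpolant jump by traces of $w:=u^e-I_\Theta^{k_\spa}(u^e)$. This closes, with two caveats. First, be precise: writing $T:=\Theta_h^{n-1}\circ(\Theta_h^n)^{-1}$, one has $[I_\Theta^{k_\spa}u^e]_h^{n-1}=[u^e]_h^{n-1}-w_+^{n-1}+w_-^{n-1}\circ T$, so the interpolant jump does \emph{not} reduce purely to one-sided traces of $w$ — it contains $[u^e]_h^{n-1}$ again (harmless, since you bound it anyway), and the change of variables under $T$ needs the Jacobian bound $|J_\Theta^{n-1}-1|\lesssim h^2$ from the proof of Theorem~\ref{theo_stabi}. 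Second, \eqref{hansbo_var} is stated for faces $F\in\mathcal{F}_I$; using it on the time slices $\Gamma_h^n(t_{n-1})$ and $\Gamma_h^N(t_N)$ is a straightforward but unstated adaptation (trace from the prism onto its bottom face, then a Hansbo trace inside that face). Finally, your own machinery contradicts your "irreducibility" remark: since identically $[w]_h^{n-1}=w_+^{n-1}-w_-^{n-1}\circ T$, the trace route alone bounds the whole jump contribution by $h^{k_\spa+\frac12}\|u\|_{H^{k_\spa+1}(S)}$, so the $W^{1,\infty}$ term in \eqref{int12} is an artifact of the chosen argument (the paper's included), not forced. In short: the paper's route is shorter and purely pointwise; yours is more modular and shows, as a by-product, that the jump term is controllable by the $H^{k_\spa+1}$ norm alone.
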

\begin{proof}
 We do not include a proof of \eqref{int11}-\eqref{int11A}. The first result can be derived using the relation between $I_\Theta^{k_\spa}$ and $I^{k_\spa}$ and  standard error bounds for the latter. The result \eqref{int11A} can be derived using \eqref{int11} and an Hansbo type estimate as in \eqref{hansbo_var}.  We outline how \eqref{int12} can be derived using \eqref{int11}-\eqref{int11A}. Bounds for the first three terms in the norm $\triplenorm u^e-I_h(u^e)\triplenorm_{h,\Theta}$, cf. \eqref{energy_norm_ho_def}, immediately follow from \eqref{int11}-\eqref{int11A}. For the fourth term in this norm we apply the trace estimate $\|w\|_{L^2(\Gamma_h^N(t_N))} \lesssim \|w\|_{H^1(S_h)}$ and then use the result \eqref{int11A}. We finally consider the last term in the norm $\triplenorm u^e-I_\Theta^{k_{\spa}}(u^e)\triplenorm_{h,\Theta}$ that measures the size of the jump term $[u^e-I_\Theta^{k_\spa}(u^e)]_h^{n-1}$ on $\Gamma_h^n(t_{n-1})$. We introduce notation $\hat \Theta_h^n:= \Theta_h^n(\cdot, t_{n-1})$, $\hat \Theta_h^{n-1}:= \Theta_h^{n-1}(\cdot, t_{n-1})$. Note that $\hat \Theta_h^n: \, \Gamma_{\rm lin}(t_{n-1}) \to \Gamma_h^n(t_{n-1})$ is a bijection with inverse $\big(\hat \Theta_h^n\big)^{-1}=(\Theta_h^n)^{-1}(\cdot,t_{n-1})$. We denote the (nodal) interpolation error operators by $E_\Theta^{k_\spa}:= I - I_{\Theta}^{k_\spa}$, $E^{k_\spa}:=I- I^{k_\spa}$. Then we obtain, using the definition of $[\cdot]_h^{n-1}$, boundedness of the standard nodal interpolation operator in the maximum norm and \eqref{theta_psi_est}:
 \begin{equation} \label{PL} 
  \begin{split}
& \|[E_\Theta^{k_\spa}(u^e)]_h^{n-1}\|_{L^2(\Gamma_h^n(t_{n-1}))} \lesssim  \|[E_\Theta^{k_\spa}(u^e)]_h^{n-1}\circ \hat \Theta_h^n
  \|_{L^2(\Gamma_{\rm lin}(t_{n-1}))}  \\
  & \lesssim \|E_\Theta^{k_\spa}(u^e)\circ \hat \Theta_h^n- E_\Theta^{k_\spa}(u^e)\circ \hat \Theta_h^{n-1} 
  \|_{L^2(\Gamma_{\rm lin}(t_{n-1}))} \\
   & \lesssim \|E^{k_\spa}(u^e\circ \hat \Theta_h^n-u^e\circ \hat \Theta_h^{n-1})
  \|_{L^\infty({\Omega_n^\Gamma}_{|t=t_{n-1}})} \\ &  \lesssim \|u^e\circ \hat \Theta_h^n-u^e\circ \hat \Theta_h^{n-1}
  \|_{L^\infty({\Omega_n^\Gamma}_{|t=t_{n-1}})} \\
  &\lesssim \|\nabla u^e\|_{L^\infty({\Omega_n^\Gamma}_{|t=t_{n-1}})} h^{k_\spa+1} \lesssim h^{k_\spa+1}\|u\|_{W^{1,\infty}(S)}. 
 \end{split}
 \end{equation}
Using \cref{PL}, the last term in the norm $\triplenorm u^e-I_\Theta^{k_{\spa}}(u^e)\triplenorm_{h,\Theta}$ can be estimated by $h^{k_\spa+\frac12} \|u\|_{W^{1,\infty}(S)}$. Combining these results we obtain the result \eqref{int12}.
\end{proof}
\ \\[1ex]
As final step in  the error analysis we have to derive a bound for the term $B_h^{\stab}(u^e-I_\Theta^{k_s}(u^e),v_h)$ in the Strang estimate \eqref{strangFormula}. For this we can use arguments very similar to ones used above. We outline the key steps. First we apply partial integration, as in the proof of Theorem~\ref{thmconsisteny}, which results in the relation \eqref{pi_used_consis} with $u^e$ replaced by the interpolation error $E_\Theta^{k_\spa}(u^e):=u^e-I_\Theta^{k_s}(u^e)$. The first, second, third and fifth term on the right-hand side in \eqref{pi_used_consis} can estimated using the Cauchy Schwarz inequality and the interpolation error bounds in \eqref{int11A}, resulting in a bound $c h^{k_\spa} \|u\|_{H^{k_\spa +1}(S) } \triplenorm v_h \triplenorm_{h,\Theta}$. For the fourth term on the right-hand side in \eqref{pi_used_consis}, with the jump term $[E_\Theta^{k_\spa}(u^e)]_h^{n-1}$, we 
	use the estimates \eqref{PL} and \eqref{CustomInvIneq_bot}, which then results in a bound $c h^{k_\spa} \|u\|_{W^{1, \infty}(S) } \triplenorm v_h \triplenorm_{h,\Theta}$. The sixth term on the right-hand side in \eqref{pi_used_consis} contains the sum over the faces $F\in \mathcal{F}_I^n$.  For the interpolation error on these faces $\|E_\Theta^{k_\spa}(u^e)\|_{L^2(F)}$ we use \eqref{hansbo_var} and the interpolation  error bound \eqref{int11}. Combining this with \eqref{CustomInvIneq2} results in a bound $c h^{k_\spa} \|u\|_{H^{k_\spa +1}(S) } \triplenorm v_h \triplenorm_{h,\Theta}$ for the sixth term. It remains to derive bounds for the stabilization terms $s_1(E_\Theta^{k_\spa}(u^e), v_h)$ and 
	$s_2(E_\Theta^{k_\spa}(u^e),v_h)$. For the former one obtains with the Cauchy Schwarz in equality and \eqref{int11} the estimate $|s_1(E_\Theta^{k_\spa}(u^e), v_h)| \lesssim h^{k_\spa} \|u\|_{H^{k_\spa +1}(S) } \triplenorm v_h \triplenorm_{h,\Theta}$. For the latter stabilization term we use repeated Cauchy-Schwarz and \eqref{int11A}, which yields
	\[
	  |s_2(E_\Theta^{k_\spa}(u^e),v_h| \lesssim h^{-1} \|E_\Theta^{k_\spa}(u^e)\|_{L^2(S_h)} \|v_h\|_{L^2(S_h)}  \lesssim 
	  h^{k_\spa} \|u\|_{H^{k_\spa +1}(S) } \triplenorm v_h \triplenorm_{h,\Theta}.
	\]
Summarizing, we obtain for the remaining term in  the Strang estimate \eqref{strangFormula} the bound
\begin{equation} \label{Contest}
 \sup_{v_h\in V_{h,\Theta}^{k_\spa}}\frac{B_h^{\stab}(u_h-I_\Theta^{k_\spa}(u^e),v_h)}{\triplenorm v_h \triplenorm_{h,\Theta}} \lesssim  h^{k_\spa} \big(\|u\|_{H^{k_\spa +1}(S) }+\|u\|_{W^{1, \infty}(S) }\big).
\end{equation}
As a direct consequence of the Strang lemma \ref{LemmaStrang} and the estimates derived in Theorem~\ref{thmconsisteny}, Proposition~\ref{PropInt} and \eqref{Contest} we obtain the following discretizaton error bound.
\begin{theorem} \label{mainthm}
 For the  solution $u$ of \cref{prob_cont} we assume $u\in H^{k_\spa +1}(S) \cap W^{1,\infty}(S)$. Let $u_h$ be the solution of \Cref{discreteproblem}. The following discretization error estimate holds:
	\begin{equation} \label{estmain}
			\triplenorm u^e-u_h \triplenorm_{h,\Theta} \lesssim 
			h^{k_{\spa}} \big(\norm{u}_{H^{k_\spa+1} (S)}+\|u\|_{W^{1, \infty}(S)}+ h\norm{f}_{L^2(S)}\big).
	\end{equation}
\end{theorem}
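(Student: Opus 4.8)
The plan is to assemble the three ingredients that have already been prepared, since Theorem~\ref{mainthm} is essentially a bookkeeping consequence of the Strang estimate. First I would invoke the Strang lemma (Lemma~\ref{LemmaStrang}), which bounds $\triplenorm u^e - u_h\triplenorm_{h,\Theta}$ by the sum of an interpolation term $\triplenorm u^e - I_\Theta^{k_\spa}(u^e)\triplenorm_{h,\Theta}$, a continuity term involving $B_h^{\stab}(u^e - I_\Theta^{k_\spa}(u^e), v_h)$, and the consistency term involving $B_h^{\stab}(u^e - u_h, v_h)$. Each of these has been bounded separately in the preceding development.

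For the first term I would use the norm interpolation bound \eqref{int12} of Proposition~\ref{PropInt}, giving $h^{k_\spa}\|u\|_{H^{k_\spa+1}(S)} + h^{k_\spa+\frac12}\|u\|_{W^{1,\infty}(S)}$. For the second term I would use the continuity estimate \eqref{Contest}, which yields $h^{k_\spa}\big(\|u\|_{H^{k_\spa+1}(S)} + \|u\|_{W^{1,\infty}(S)}\big)$. For the third (genuinely inconsistency-driven) term I would invoke the consistency bound \eqref{konsistenz} of Theorem~\ref{thmconsisteny}, giving $h^{k_\spa}\big(\|u\|_{H^2(S)} + h\|f\|_{L^2(S)}\big)$.

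It then remains only to collect these contributions and tidy up the norms. Since the analysis assumes $k_\spa \geq 1$, one has the continuous embedding $H^{k_\spa+1}(S) \hookrightarrow H^2(S)$, so that $\|u\|_{H^2(S)} \lesssim \|u\|_{H^{k_\spa+1}(S)}$, and for $h \le 1$ the factor $h^{k_\spa + \frac12}$ is absorbed into $h^{k_\spa}$. Summing the three bounds therefore produces
\begin{equation*}
 \triplenorm u^e - u_h\triplenorm_{h,\Theta} \lesssim h^{k_\spa}\big(\|u\|_{H^{k_\spa+1}(S)} + \|u\|_{W^{1,\infty}(S)} + h\|f\|_{L^2(S)}\big),
\end{equation*}
which is the claimed estimate \eqref{estmain}. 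I do not expect any genuine obstacle at this final stage: all the difficulty is upstream, concentrated in the consistency analysis of Theorem~\ref{thmconsisteny}, where the geometric perturbations between $S$ and $S_h$ (surface-measure differences, conormal jumps, the velocity approximation $\alpha_h$, and the inter-slab transfer operator $\Theta_h^{n-1}\circ(\Theta_h^n)^{-1}$) must be controlled using the geometric estimates of Sections~\ref{section_basic_geometry_est}--\ref{secGeometry}. The only points requiring minor care here are matching the Sobolev regularity indices and confirming that the lower-order power $h^{k_\spa+\frac12}$ does not degrade the final rate.
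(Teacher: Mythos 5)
Your proposal is correct and follows exactly the paper's own argument: the paper proves Theorem~\ref{mainthm} as a direct consequence of the Strang lemma (Lemma~\ref{LemmaStrang}) combined with the interpolation bound \eqref{int12}, the continuity bound \eqref{Contest}, and the consistency bound \eqref{konsistenz}, which is precisely your assembly. Your two tidying observations (absorbing $h^{k_\spa+\frac12}$ into $h^{k_\spa}$ and replacing $\norm{u}_{H^2(S)}$ by $\norm{u}_{H^{k_\spa+1}(S)}$) are exactly the implicit steps the paper leaves to the reader.
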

\ \\
{\color{black} \begin{remark} \rm
 The energy norm $\triplenorm \cdot \triplenorm_{h,\Theta}$  contains a spatial gradient. Hence, for $\Delta t \sim h$ and using finite element polynomials of degree $k_s$ in space and time, the interpolation error in this norm is of order $\mathcal{O}(h^{k_s})$. The discretization error bound \eqref{estmain} is optimal in the sense that it has the same order of convergence $k_s$.   The bound confirms the  convergence results obtained in numerical experiments with $k_s=1,2,3,4$, cf.~\cite{sass2022}. One expects one order more if only the $L^2$ error (in space and time) is considered, cf. also the experiments in \cite{sass2022}.  Such a result, however, has not been rigorously derived, yet.
 
 The result in Theorem~\ref{mainthm} is the first  error bound for this type of higher order Eulerian space-time discretization method. The result, although sharp in the sense as explained above, is still very crude due to the assumptions $k_{g,s}=k_{g,q}=k_q=k_s$ and $\Delta t \sim h$. Several interesting accuracy issues are not addressed by our crude analysis. Here we mention two of these. If we keep the assumption $k_{g,s}=k_{g,q}=k_q=k_s$  but drop $\Delta t \sim h$ then, due to the fact that the energy norm does not contain a derivative with respect to $t$, for  the interpolation error term in \eqref{strangFormula} we expect,
 \[ \triplenorm u^e-I_\Theta^{k_\spa}(u^e)\triplenorm_{h,\Theta} \lesssim h^{k_s} + \Delta t^{k_s+1},
 \]
cf. the interpolation error bounds in \cite{HeimannLehrenfeld}. Such a higher order convergence with respect to $\Delta t$ in the energy norm is, however,  not observed in experiments. This might be due to  consistency error  terms that contain derivatives also in time direction and for which the optimal bound is of order $h^{k_s} + \Delta t^{k_s}$, cf. \eqref{estdiv}. A rigorous   theoretical explanation, however, is lacking. One other issue is  related to the well-known superconvergence effect of DG time discretization methods  at the nodes, cf. \cite[Theorem 12.3]{Thomee97}. We do not observe this effect in our numerical experiments. A satisfactory explanation for this is lacking.  
\end{remark}}
\ \\

\textcolor{black}{Concerning mass conservation properties of the discrete scheme there is no rigorous analysis.  Numerical experiments in \cite[Section 6]{Diss_Sass} and \cite{sass2022} show that the mass error $m(t):=|\int_{\Gamma_h^n(t)} u_h \, {\rm d}s_h- \int_{\Gamma_h^0(0)}u_h \, {\rm d}s_h|$, $t \in I_n$, is essentially independent of $n$ and for the case $k_{g,s}=k_s=k_{g,q}=k_q=1$ and $\Delta t \sim h$ we observe $m(t) \sim  h^2$.  
	}
	\ \\[1ex]
{\bf Acknowledgement.} The authors thank the German Research Foundation
(DFG) for financial support within the Research Unit ”Vector- and tensor valued
surface PDEs” (FOR 3013) with project no. RE 1461/11-2.
\section{Appendix: proof of Lemma~\ref{lemmeasure}} 
For $B\subset \mathbb{R}^3$ let $\f:B\rightarrow S_h^n$ be a local parametrization of $S_h^n$. The Lebesgue measure on $B$ is denoted by $\dif \zz$.
The Jacobian of $\f$ is denoted by $D_{\zz}\f=: \mathbf{F}: B \rightarrow \R^{4\times 3}$. The three columns of $\mathbf{F}$ are tangential to $S_h^n$. 
For given $\zz \in B$ we introduce a singular value decomposition $\mathbf{F}=\mathbf{U}\mathbf{\Sigma} \mathbf{V}^T$, with orthogonal matrices ${\mathbf{U}=(\mathbf{u}_1,\mathbf{u}_2,\mathbf{u}_3,\mathbf{u}_4)\in \R^{4\times 4}}$ and $\mathbf{V}\in \R^{3\times 3}$. The singular values are denoted by $\sigma_i$, $i=1,2,3$. We choose $\mathbf{u}_i$, $1 \leq i \leq 4$, such that $\mathbf{u}_4(\zz)=\n_{S_h}(\f(\zz))$.
We obtain	
\begin{equation}
	\dif \sigma_h=\sqrt{\det(\mathbf{F}^T \mathbf{F})}\dif \zz=\sqrt{\det(\mathbf{V}\mathbf{\Sigma}^T \mathbf{U}^T \mathbf{U}\mathbf{\Sigma} \mathbf{V}^T)}\dif \zz 
 = \prod_{i=1}^3\sigma_i \dif \zz.\label{sigmanh}
\end{equation}

We write $\delta_t\coloneqq \frac{\partial \delta}{\partial t}$ and $\n_t=\ddt{\n}{t}$ on $U$ and note that $\bn \cdot \bn_t=0$ holds.  
A local parametrization of $S^n$ is given by $\p\circ \f:B\rightarrow S^n$. Its Jacobian $\mathbf{A}:=D_{\zz} (\p\circ \f):B\rightarrow \R^{4\times 3}$ is given by 
\begin{equation*}
	\mathbf{A}=
	\begin{pmatrix}
		\gp-\delta\mathbf{H} & -\delta_t\n-\delta \n_t \\
		0& 1 
	\end{pmatrix} \mathbf{F}.
\end{equation*}
For ease of presentation we omit function arguments, noting that the functions $\bF$, $\bU$, $\bU_m$, $\bV$, $\bV_m$, $\bSigma$, $\bSigma_m$, $\sigma_i$, $\dif \zz$, some of which are introduced below, are evaluated on $B$, while the other functions are evaluated, via composition with $\mathbf{f}$, on $S_h^n$. We obtain the surface measure relation
$\dif \sigma =\sqrt{\det(\mathbf{A}^T\mathbf{A})}\dif \zz $ and thus
\begin{equation}
	\mu_h^S =\frac{\sqrt{\det(\mathbf{A}^T \mathbf{A})}}{ \prod_{i=1}^3\sigma_i }.\label{zwischenschritt_det_lemma}
\end{equation}
We will express $\det(\mathbf{A}^T \mathbf{A})$ in terms of the singular values $\sigma_i$ using basic linear algebra relations. Using the Schur complement formula it follows that for $\mathbf{G} \in \R^{n\times (n-1)}$, $\by \in \R^n, ~ \by \neq 0$,  we have
\begin{equation} \label{elim1}
  \det \begin{pmatrix}\mathbf{G} & \by \end{pmatrix}^2 = \det \begin{pmatrix}\mathbf{G}^T\mathbf{G} & \mathbf{G}^T\by \\
  \by^T\mathbf{G} & \by^T \by \end{pmatrix} =\by \cdot \by\,  \det ( \mathbf{G}^T\mathbf{G} -(\by \cdot \by)^{-1} \mathbf{G}^T \by \by^T\mathbf{G}). 
\end{equation}
Using this and $\mathbf{A}^T \begin{pmatrix}\n\\\delta_t \end{pmatrix}=0$ we get
\begin{equation} \label{elim2} 
 \det (\mathbf{A}^T \mathbf{A}) = (1+\delta_t^2)^{-1}\det\begin{pmatrix}
			\mathbf{A} & \begin{matrix}
				\n\\\delta_t
			\end{matrix}
		\end{pmatrix}^2.
\end{equation}
An elementary calculation yields
\begin{equation*}
	\det\begin{pmatrix} \mathbf{I}-\delta \mathbf{H} & -\delta \n_t  \\ \n^T \delta_t & 1+\delta_t^2 \end{pmatrix}=(1+\delta_t^2)\det(\mathbf{I}-\delta \mathbf{H})=(1+\delta_t^2)\prod_{i=1}^{2}(1-\delta \kappa_i).
\end{equation*}
Using this we obtain
\begin{align}
	\det \begin{pmatrix}
			\mathbf{A} & \begin{matrix}
				\n\\\delta_t
			\end{matrix}
		\end{pmatrix}&
	=\det\begin{pmatrix} \mathbf{I}-\delta \mathbf{H} & -\delta \n_t \nonumber \\ \delta_t\n^T  & 1+\delta_t^2 \end{pmatrix}\det\begin{pmatrix}
		\begin{pmatrix}
			\mathbf{P} & -\delta_t\n\\0&1\end{pmatrix}\mathbf{F} &\n_0
	\end{pmatrix} \\ & = (1+\delta_t^2)\prod_{i=1}^{2}(1-\delta \kappa_i)\det\begin{pmatrix}
		\begin{pmatrix}
			\mathbf{P} & -\delta_t\n\\0&1\end{pmatrix}\mathbf{F} &\n_0
	\end{pmatrix} .\label{zweidets}
\end{align}
We apply \eqref{elim1} with
	$\mathbf{G}= \begin{pmatrix}
		\mathbf{P} & -\delta_t\n\\0&1\end{pmatrix}\mathbf{F}$, which yields
\begin{align*}
	&\det\begin{pmatrix}
		\begin{pmatrix}
			\mathbf{P} & -\n\delta_t\\0&1\end{pmatrix}\mathbf{F} &\n_0
	\end{pmatrix}^2=\det \left(\mathbf{G}^T \mathbf{G}-\mathbf{G}^T \n_0\n_0^T\mathbf{G}\right)=\det\left(\mathbf{G}^T \begin{pmatrix}\mathbf{P} &0\\0&1\end{pmatrix}\mathbf{G}\right)\\
	&\qquad =\det\left(\mathbf{F}^T\begin{pmatrix}\mathbf{P} &0\\0&1\end{pmatrix}\mathbf{F}\right)=\det\left( \mathbf{\Sigma}^T \mathbf{U}^T \begin{pmatrix}\mathbf{P} &0\\0&1\end{pmatrix} \mathbf{U}\mathbf{\Sigma}\right).
\end{align*}
We define $\mathbf{U}_m\coloneqq (\mathbf{u}_1,\mathbf{u}_2,\mathbf{u}_3) \in \R^{4\times 3}$ and $\mathbf{\Sigma}_m:={ \rm diag}(\sigma_1,\sigma_2,\sigma_3) \in \R^{3\times 3}$, hence, $\mathbf{U}\mathbf{\Sigma}=\mathbf{U}_m\mathbf{\Sigma}_m$. Thus we get
\begin{equation*}
	\det\begin{pmatrix}
		\begin{pmatrix}
			\mathbf{P} & -\n\delta_t\\0&1\end{pmatrix}\mathbf{F} &\n_0
	\end{pmatrix} =\det\left(\mathbf{U}_m^T \begin{pmatrix}\mathbf{P} &0\\0&1\end{pmatrix}\mathbf{U}_m \right)^\frac12\prod_{i=1}^{3}\sigma_i.
\end{equation*}
Combining this with the results in \eqref{zwischenschritt_det_lemma}, \eqref{elim2} and \eqref{zweidets} we get
\begin{equation} \label{55}
  \mu_h^S= (1+\delta_t^2)^\frac12 \prod_{i=1}^{2}(1-\delta \kappa_i)\det\left(\mathbf{U}_m^T \begin{pmatrix}\mathbf{P} &0\\0&1\end{pmatrix}\mathbf{U}_m \right)^\frac12.
\end{equation}
Now note
\begin{align} \label{66}
	\begin{aligned}
		\det\left(\mathbf{U}_m^T \begin{pmatrix}\mathbf{P} &0\\0&1\end{pmatrix}\mathbf{U}_m \right) =\det\left( \mathbf{I}- \mathbf{U}_m^T\n_0 \n_0^T\mathbf{U}_m^T\right)= 1- (\mathbf{U}_m^T \n_0)\cdot(\mathbf{U}_m^T \n_0).
	\end{aligned}
\end{align}
From $\bn_0= \sum_{i=1}^3 (\bn_0\cdot\bu_i) \bu_i + (\bn_0\cdot \bn_{S_h}) \bn_{S_h}$ it follows that
\[
  1= \sum_{i=1}^3 (\bn_0\cdot \bu_i)^2 + (\bn_0\cdot \bn_{S_h})^2=(\mathbf{U}_m^T \n_0)\cdot(\mathbf{U}_m^T \n_0)+ (\bn_0\cdot \bn_{S_h})^2.
\]
Using this and \eqref{55}-\eqref{66}
we get
\[ \mu_h^S= (1+\delta_t^2)^\frac12 \prod_{i=1}^{2}(1-\delta \kappa_i) \, \bn_0\cdot \bn_{S_h} 
\]
Using the  identity $\delta_t=-\w^e\cdot \n = V^e$ on $S_h^n$ completes the proof.

\bibliographystyle{siam}
\bibliography{literatur}

\end{document}